\newcommand{\bbC}{\mathbb{C}}
\newcommand{\bbN}{\mathbb{N}}
\newcommand{\bbR}{\mathbb{R}}
\newcommand{\bbZ}{\mathbb{Z}}
\newcommand{\calL}{\mathcal{L}}
\newcommand{\calU}{\mathcal{U}}
\DeclareMathOperator{\one}{{\mathbbm{1}}} 
\DeclareMathOperator{\re}{Re} 
\DeclareMathOperator{\dist}{dist} 
\newcommand{\argument}{\mathord{\,\cdot\,}} 
\newcommand{\dx}{\;\mathrm{d}} 
\DeclareMathOperator{\sgn}{sgn} 
\DeclareMathOperator{\Fix}{Fix} 
\newcommand{\norm}[1]{\left\lVert #1 \right\rVert} 
\newcommand{\modulus}[1]{\left\lvert #1 \right\rvert} 
\newcommand{\duality}[2]{\left\langle#1\, ,\, #2\right\rangle} 
\newcommand{\dom}[1]{\operatorname{dom}\left(#1\right)} 
\DeclareMathOperator{\Ima}{Rg} 
\newcommand\restrict[1]{\raisebox{-.5ex}{$|$}_{#1}} 
\newcommand{\spec}{\sigma} 
\newcommand{\Res}{\mathcal{R}} 
\newcommand{\pntSpec}{\spec_{\operatorname{pnt}}} 
\newcommand{\essSpec}{\spec_{\operatorname{ess}}} 
\newcommand{\perSpec}{\spec_{\operatorname{per}}} 
\newcommand{\perpntSpec}{\spec_{\operatorname{per,pnt}}} 
\newcommand{\spr}{r} 
\newcommand{\spb}{s} 
\newcommand{\essSpb}{\spb_{\operatorname{ess}}} 
\newcommand{\gbd}{\omega_0} 
\theoremstyle{definition}
\newtheorem{definition}{Definition}[section]
\newtheorem{remark}[definition]{Remark}
\newtheorem{remarks}[definition]{Remarks}
\newtheorem*{remark*}{Remark}
\newtheorem*{remarks*}{Remarks}
\newtheorem{example}[definition]{Example}
\theoremstyle{plain}
\newtheorem{proposition}[definition]{Proposition}
\newtheorem{lemma}[definition]{Lemma}
\newtheorem{theorem}[definition]{Theorem}
\newtheorem{corollary}[definition]{Corollary}
\numberwithin{equation}{section} 
\begin{document}

\title[Eventual positivity and spectrum]{Eventually positive semigroups: spectral and asymptotic analysis}
\author{Sahiba Arora}
\address{Sahiba Arora, Department of Applied Mathematics, University of Twente, 217, 7500 AE, Enschede, The Netherlands}
\email{sahiba.arora@math.uni-hannover.de}
\subjclass[2020]{47D06, 47B65, 47A10}
\keywords{eventual positivity; convergence; peripheral spectrum; long-term behaviour; irreducibility}
\date{\today}
\begin{abstract}
    The spectral theory of semigroup generators is a crucial tool for analysing the asymptotic properties of operator semigroups. Typically, Tau\-berian theorems, such as the ABLV theorem, demand extensive information about the spectrum to derive convergence results. However, the scenario is significantly simplified for positive semigroups on Banach lattices. This observation extends to the broader class of eventually positive semigroups -- a phenomenon observed in various concrete differential equations. In this paper, we investigate the spectral and asymptotic properties of eventually positive semigroups, focusing particularly on the persistently irreducible case. Our findings expand upon the existing theory of eventual positivity, offering new insights into the cyclicity of the peripheral spectrum and asymptotic trends. Notably, several arguments for positive operators and semigroups do not apply in our context, necessitating the use of ultrapower arguments to circumvent these challenges.
\end{abstract}

\maketitle

\section{Introduction}

Owing to a variety of applications -- from physical and biological models to probability theory and transport phenomena -- positivity is an omnipresent topic in the theory of $C_0$-semigroups. While the study of positive operator semigroups on Banach lattices is classical and can be found, for instance, in the monographs \cite{Nagel1986, BatkaiKramarRhandi2017}, the study of eventually positive semigroups (in the infinite-dimensional setting) has emerged only in the last decade. This phenomenon was first observed by Daners in 2014 \cite{Daners2014} for a Dirichlet-to-Neumann semigroup. Since then, a systematic theory of eventually positive semigroups has rapidly emerged starting from the works of Daners, Glück, and Kennedy \cite{DanersGlueckKennedy2016a, DanersGlueckKennedy2016b}; see \cite{Glueck2022} for a survey. The theory of eventual positivity has been complemented by a wide range of examples, such as second-order operators with (non-local) Robin boundary conditions \cite[Theorem~1.1]{GlueckMui2024},
Laplacians with non-local boundary conditions and Dirichlet bi-Laplacians \cite[Section~4]{DanersGlueck2018b}, semigroups on metric graphs (\cite[Proposition~4.5]{BeckerGregorioMugnolo2021}, \cite[Section~6]{GregorioMugnolo2020a}, and \cite[Proposition~3.7]{GregorioMugnolo2020b}), various delay differential equations (\cite[Section~4]{DanersGlueck2018b} and  \cite[Section~11.6]{Glueck2016}), Laplacians with point-interactions \cite[Proposition~2]{HusseinMugnolo2020},
and elliptic operators with Wentzell boundary conditions \cite[Section~7]{DenkKunzePloss2021}, \cite[Section~4.3]{Ploss2024}, and \cite[Section~7]{KunzeMuiPloss2025}.

The notion of eventual positivity has been expanded in various directions, such as asymptotic positivity \cite[Sections~8-9]{DanersGlueckKennedy2016b}, local eventual positivity \cite{Arora2022, Mui2023, DanersGlueckMui2023}, eventual domination \cite{GlueckMugnolo2021, AroraGlueck2023b}, and eventual positivity of Ces{\`a}ro means \cite[Section~5]{AroraGlueck2023b}. Moreover, recently the notion of persistent irreducibility was introduced in \cite{AroraGlueck2024} as a version of irreducibility more suited in the context of eventual positivity. 

\subsection*{Contributions and organisation of the article}

The renowned works of Perron and Frobenius on positive matrices, along with those of Kre\u{\i}n and Rutman's on positive operators, demonstrate that positivity properties have a significant impact on the spectrum. Indeed, this feature is enjoyed by positive semigroups as well. Motivated by these insights, the spectral analysis of eventually positive semigroups was undertaken in \cite[Section~7]{DanersGlueckKennedy2016a} and \cite{AroraGlueck2021a} in order to extend known spectral and asymptotic results for positive semigroups. Likewise, spectral properties of persistently irreducible eventually positive semigroups were examined in \cite[Section~4]{AroraGlueck2024}. The challenge to prove these results is in the fact that only a limited number of techniques used in the positive case carry over to the eventual positivity case.

One such roadblock lies in not knowing whether the peripheral spectrum of an eventually positive semigroup is cyclic under the same general conditions as in the positive case. Another such barrier lies in the fact that the generator of a positive semigroup is a resolvent positive operator, a property that greatly facilitates various proofs of spectral results for positive semigroups. The first problem was circumvented in \cite{AroraGlueck2021a} by proving a Niiro-Sawashima type theorem for operators whose powers eventually become positive. In this paper, we tackle the second issue by employing ultrapower techniques to generalise results previously known only for the positive case. Our contributions can be highlighted as follows:

\begin{enumerate}[\upshape (a)]
    \item To warm up, we give a reminder of the notion of (persistent) irreducibility and then limit ourselves to matrix semigroups in Section~\ref{sec:finite-dimensions}. More precisely, we give an example of a matrix $A$ on $E=\bbR^4$ such that the corresponding matrix semigroup is irreducible, yet the condition
    \[
        \forall\ 0\lneq f \in E, 0\lneq \varphi\in E', \exists\ t\ge 0: \duality{\varphi}{e^{tA}f}\ne 0
    \]
    is not fulfilled. The significance of this example lies in the fact that the corresponding  semigroup lacks (eventual) positivity. This is important because, along with analyticity, (eventual) positivity is sufficient for the above condition to follow from irreducibility \cite[Proposition~3.11]{AroraGlueck2024}.

    \item Next, we show that one can always associate a positive pseudo-resolvent to a uniformly eventually positive bounded semigroup that retains useful information about the spectrum. This is the content of Section~\ref{sec:ultrapowers}.

    \item The ultrapower techniques developed in Section~\ref{sec:ultrapowers} are then employed in Sections~\ref{sec:peripheral-spectrum} and~\ref{sec:domination}. Not only do we show, in Theorem~\ref{thm:cyclicity-peripheral}, that for a uniformly eventually positive semigroup, boundedness implies cyclicity of the peripheral spectrum (provided spectral bound of the generator is $0$) but also generalise a spectral result of Räbiger-Wolff about domination of positive semigroups to eventually positive semigroups that are merely satisfy eventually domination (Theorem~\ref{thm:spectrum-under-domination}).
    
    \item Regarding cyclicity, we are also  able to show that for uniformly eventually positive semigroups that are persistently irreducible, if the spectral bound of the generator is a pole and the semigroup is bounded, then the peripheral spectrum is cyclic (Proposition~\ref{prop:peripheral-in-point} and Theorem~\ref{thm:cyclic-peripheral-point}).

    \item In Section~\ref{sec:convergence}, we prove a new result about strong convergence of eventually positive semigroups and also study the asymptotic behaviour of persistently irreducible semigroups, an endeavour that was not taken up in \cite{AroraGlueck2024}. Finally, in Section~\ref{sec:non-empty-spectrum}, we give a sufficient condition to ensure that the spectrum of a persistently irreducible semigroup is non-empty.
\end{enumerate}

The rest of this section is devoted to recalling the concept of eventual positivity as well as introducing certain terminology.

\subsection*{Notation and preliminaries}

We assume the reader is familiar with the theories of Banach lattices, operator semigroups, and spectral theory of closed operators. Our notations and terminology are standard and can be found in the monographs \cite{Meyer-Nieberg1991, Schaefer1974}, \cite{EngelNagel2000}, and \cite[Chapter~VIII]{Yosida1980} respectively. 

Let $E$ be a Banach lattice with positive cone $E_+$. We use the notation $f\gneq 0$ to mean that $f\ge 0$ but $f\ne 0$ or equivalently, $f\in E_+\setminus \{0\}$. A subspace $I$ of $E$ is called a \emph{(lattice) ideal} if for each $x,y\in E$, the inequality $0\le y\le x$ and $x\in I$ implies $y\in I$.
The \emph{principal ideal} generated by an element $u\in E_+$ defined as
\[
    E_u := \{ f\in E: \text{ there exists }c>0\text{ such that }\modulus{f}\le cu\}
\]
is a Banach lattice when equipped with the \emph{gauge norm}
\[
    \norm{f}_u := \inf \{c>0 : \modulus{f}\le cu\}, \qquad (f\in E_u),
\]
that embeds continuously into $E$. If $E_u$ is also dense in $E$, then we call $u$ a \emph{quasi-interior} point of $E_+$. This is equivalent by \cite[Theorem~II.6.3]{Schaefer1974} to $\duality{\varphi}{u}>0$ for each $\varphi \in E'_+$; a fact we use repeatedly without quoting.
Let $E$ and $F$ be complex Banach lattices with real parts $E_{\bbR}$ and $F_{\bbR}$ respectively. A linear operator $A$ between $E$ and $F$ with domain $\dom A$ is called \emph{real} if $x+iy\in \dom A$ with $x,y \in E_{\bbR}$ implies $x,y\in \dom A$ and $A$ maps $\dom A \cap E_{\bbR}$ into $F_{\bbR}$. A natural example of a real operator is a {positive operator}, i.e., one that maps the positive cone of the domain to the positive cone of the co-domain. A positive functional $\varphi \in E'$ is said to be \emph{strictly positive} if its kernel contains no positive non-zero elements.

Let $(e^{tA})_{t\ge 0}$ be a semigroup on a complex Banach lattice $E$. We say that $(e^{tA})_{t\ge 0}$ is \emph{real} if $e^{tA}$ is a real operator for each $t\ge 0$. Furthermore, $(e^{tA})_{t\ge 0}$ is called \emph{individually eventually positive} if for each $0\le f\in E$, there exists $t_0\ge 0$ such that $e^{tA}f\ge 0$ for all $t\ge t_0$; if $t_0$ can be chosen independent of $f$, then the semigroup is said to be \emph{uniformly eventually positive}. In finite-dimensional spaces, individual and uniform eventual positivity are obviously equivalent and they were first studied in \cite{NoutsosTsatsomeros2008}. In contrast, there exist semigroups that are individually but not uniformly eventually positive in the infinite-dimensional setting \cite[Examples~5.7 and~5.8]{DanersGlueckKennedy2016a}. The investigation of eventually positive semigroups on complex Banach lattices was initiated in a series of papers \cite{DanersGlueckKennedy2016a, DanersGlueckKennedy2016b, DanersGlueck2017, DanersGlueck2018a, DanersGlueck2018b}, and since then the theory has not only developed but also branched out \cite{Arora2022, Mui2023, DanersGlueckMui2023, GlueckMui2024, AroraGlueck2021a, AroraGlueck2023b, AroraGlueck2024, AroraGlueck2022b, GlueckMugnolo2021, ArnoldCoine2023, Peruzzetto2022, PappuRastogiSrivastava2025}.

\section{Persistent irreducibility without (eventual) positivity}
    \label{sec:finite-dimensions}

Let $(e^{tA})_{t\ge 0}$ be a $C_0$-semigroup on a Banach lattice $E$. Recall that $(e^{tA})_{t\ge 0}$ is called \emph{irreducible} if it leaves no non-empty proper closed ideal of $E$ invariant. Irreducibility  is usually studied for the class of positive semigroups, see \cite[Section~C-III-3]{Nagel1986} or \cite[Section~14.3]{BatkaiKramarRhandi2017}. Recently, though, irreducibility has been investigated for the class of eventually positive semigroups in \cite{AroraGlueck2024}. In this context, the stronger notion of \emph{persistently irreducible} semigroups turns out to be more relevant:~$(e^{tA})_{t\ge 0}$ is said to be \emph{persistently irreducible} if it leaves no non-empty proper closed ideal of $E$ \emph{eventually invariant}, i.e., if $I$ is a closed ideal in $E$ and there exists $t_0\ge 0$ such that $e^{tA}I\subseteq I$ for all $t\ge t_0$, then $I=\varnothing$ or $I=E$. An example of an irreducible semigroup that is not persistently irreducible can be found in \cite[Example~3.10]{AroraGlueck2024} and an example of a non-positive persistently irreducible semigroup is constructed in \cite[Example~5.6]{AroraGlueck2024}. Furthermore, the semigroups considered in \cite[Section~6]{DanersGlueckKennedy2016b} and \cite[Section~4]{DanersGlueck2018b} are all persistently irreducible due to \cite[Remark~3.4]{AroraGlueck2024}.

In this short section, we restrict our attention to the finite-dimensional case. Here, it can be easily checked whether a matrix or the corresponding matrix semigroup is irreducible by looking at the associated directed graph. In fact, in this case the notions of irreduciblity and persistent irreducibility turn out to be equivalent (Proposition~\ref{prop:irreducibility-matrix-semigroup}) without any kind of positivity assumptions.  Thereafter, we give an example to show that the eventual positivity assumption is needed for the equivalences in \cite[Example~3.11]{AroraGlueck2024} to hold.

Let $A=(a_{ij})$ be a real matrix of dimension $d$. The \emph{weighted directed graph associated to $A$} is a graph with $d$ vertices $1,2,\ldots,d$ such that there is an edge $e_{ij}$ with weight $a_{ij}$ from vertex $i$ to vertex $j$ if and only if $a_{ij}\ne 0$. Conversely, every weighted directed graph corresponds to a square matrix. A directed graph is called \emph{strongly connected} if every vertex can be reached from any other vertex, i.e., if $u,v$ are distinct vertices, then there is a path in each direction between $u$ and $v$.

\begin{proposition}
    \label{prop:irreducibility-matrix-semigroup}
    The following conditions are equivalent for a matrix $A \in \bbR^{d\times d}$.
    \begin{enumerate}[\upshape (i)]
        \item The matrix $A$ is irreducible.
        \item There is no permutation matrix $P$ and square matrices $X$ and $Z$ such that
            \[
                PAP^{-1} = 
                \begin{bmatrix}
                    X & Y\\
                    0 & Z
                \end{bmatrix}.
            \]
        \item The directed graph associated to $A$ is strongly connected.
        \item The matrix $\modulus A$ is irreducible.
        \item The matrix semigroup $(e^{tA})_{t \ge 0}$ is irreducible.
        \item The matrix semigroup $(e^{tA})_{t \ge 0}$ is persistently irreducible.
    \end{enumerate}
\end{proposition}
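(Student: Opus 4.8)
The plan is to split the cycle into the classical linear-algebraic core (i)--(iv), which is the finite-dimensional Perron--Frobenius circle of ideas, and the two semigroup assertions (v)--(vi), where the extra ingredient is that $t\mapsto e^{tA}$ is an entire $\bbR^{d\times d}$-valued function. Throughout I would use that the closed ideals of the underlying finite-dimensional Banach lattice are precisely the coordinate subspaces $I_J:=\{x:\ x_j=0\text{ for all }j\in J\}$ with $J\subseteq\{1,\dots,d\}$, the non-trivial proper ones being those with $\emptyset\ne J\ne\{1,\dots,d\}$.

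For (i) $\Leftrightarrow$ (ii): the subspace $I_J$, which consists of the vectors supported in $J^c$, is $A$-invariant exactly when $a_{ij}=0$ for all $i\in J$ and $j\in J^c$; reordering the standard basis so that the indices of $J^c$ precede those of $J$ turns this into the stated block form, with the zero block of size $\#J\times\#J^c$, and this correspondence is reversible. For (ii) $\Leftrightarrow$ (iii): such a block form after a permutation conjugation says, in terms of the zero pattern, that the vertex set splits into two non-empty parts $S\sqcup S^c$ with no arc running from $S^c$ into $S$; given vertices $u,v$ with no directed path $u\to v$, one produces such a splitting by taking $S^c$ to be the set of vertices reachable from $u$, and conversely a path that starts in $S^c$ can never leave it, so strong connectivity fails. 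This is standard, so I would either include this short combinatorial argument or cite a monograph on non-negative matrices. Finally, (iv) is immediate: $A$ and $\modulus A$ have the same zero pattern, hence the same associated digraph, so (iii) for $A$ coincides with (iii) for $\modulus A$, which is (iv).

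For the semigroup statements, (i) $\Leftrightarrow$ (v) rests on two routine facts: if $AI_J\subseteq I_J$ then every partial sum of $\sum_{n}t^nA^n/n!$ maps the closed subspace $I_J$ into itself, whence $e^{tA}I_J\subseteq I_J$ for all $t$; conversely, if $e^{tA}I_J\subseteq I_J$ for all $t\ge 0$, then for $x\in I_J$ one has $t^{-1}(e^{tA}x-x)\in I_J$, and letting $t\downarrow 0$ gives $Ax\in I_J$ by closedness. The implication (vi) $\Rightarrow$ (v) is immediate from the definitions, since an invariant ideal is eventually invariant. The one point that requires an idea is (v) $\Rightarrow$ (vi): suppose $I_J$ is eventually invariant, say $e^{tA}I_J\subseteq I_J$ for all $t\ge t_0$; then for each fixed $x\in I_J$ and each of the finitely many coordinate functionals $e_j^{*}$, $j\in J$, whose common kernel is $I_J$, the function $t\mapsto\duality{e_j^{*}}{e^{tA}x}$ is the restriction of an entire function and vanishes on $[t_0,\infty)$, hence vanishes identically; thus $e^{tA}I_J\subseteq I_J$ for all $t\ge 0$. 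So ``eventually invariant'' and ``invariant'' coincide for coordinate subspaces, and (v) $\Leftrightarrow$ (vi) follows.

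I expect the only slightly delicate step to be the combinatorial equivalence (ii) $\Leftrightarrow$ (iii), essentially the bookkeeping of which block is zero and hence in which direction the arcs are missing; the genuinely new content, modest as it is, is the identity-theorem argument that upgrades eventual invariance to invariance in (v) $\Rightarrow$ (vi). I would accordingly state (i)--(iv) briskly, with a reference, and present the analyticity argument in full.
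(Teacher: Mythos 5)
Your proposal is correct and follows essentially the same route as the paper: the linear-algebraic equivalences (i)--(iv) are the standard Perron--Frobenius facts (which the paper simply cites), (i)$\Leftrightarrow$(v) comes from the exponential formula, and (v)$\Leftrightarrow$(vi) is exactly the paper's identity-theorem-for-analytic-functions argument. The only difference is that you write out the classical combinatorial steps that the paper delegates to references.
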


\begin{proof}
    The equivalences (i)-(iii) are well-known and can be found, for instance, in \cite[Lemma~5.10]{BatkaiKramarRhandi2017} and \cite[Theorem~1.2.3 and Page~172]{BrualdiCvetkovic2009}. Obviously, the directed graph associated with $A$ is strongly connected if and only if the directed graph associated with $\modulus A$ is strongly connected, so we even get that (i)-(iv) are equivalent. 
    In addition, since every matrix semigroup is analytic, so by the identity theorem of analytic functions, conditions (v) and (vi) are equivalent. Finally, due to the exponential formula, a closed ideal is $A$-invariant if and only if it is $(e^{tA})_{t\ge0}$-invariant, hence (i) is also equivalent to (v).
\end{proof}

For a $C_0$-semigroup $(e^{tA})_{t\ge 0}$ on a Banach lattice $E$, the condition
\begin{equation}
    \label{eq:weak-condition-arbitrary-times}
    \text{for each }0\lneq f \in E, 0\lneq \varphi\in E', \text{ there exists }t\ge 0: \duality{\varphi}{e^{tA}f}\ne 0
\end{equation}
implies that $(e^{tA})_{t\ge 0}$ is irreducible \cite[Proposition~3.3]{AroraGlueck2024}. The converse is true if $(e^{tA})_{t\ge 0}$ is individually eventually positive and analytic \cite[Proposition~3.11]{AroraGlueck2024}. The following example shows that one cannot drop the eventual positivity assumption in  \cite[Proposition~3.11]{AroraGlueck2024}, even in finite-dimensions.

\begin{example}
    Consider the symmetric matrix
    \[
        A = 
            \begin{bmatrix}
                0 & 0 & -1 & 1\\
                0 & 0 & 1 & -1\\
                -1 & 1 & 0 & 0\\
                1 & -1 & 0 & 0
            \end{bmatrix}.
    \]
    It can be checked that the largest eigenvalue $r=2$
    does not have a positive eigenvector and so $(e^{tA})_{t \ge 0}$ is not individually eventually positive \cite[Theorem~7.7]{DanersGlueckKennedy2016a}. Moreover, the kernel of $A=A^T$ contains the positive non-zero vectors
    \[
        f : = 
            \begin{bmatrix}
                1 \\
                1 \\
                0 \\
                0
            \end{bmatrix}
            \qquad   \text{and}  \qquad
        \varphi :=
            \begin{bmatrix}
                0 \\
                0 \\
                1 \\
                1
            \end{bmatrix}
    \]
    and hence, $\duality{\varphi}{e^{tA}f}=\duality{\varphi}{f}=0$ for all $t\ge 0$. In other words,~\eqref{eq:weak-condition-arbitrary-times} fails.

    On the other hand, the directed graph associated with $A$ is given by
    \begin{center}
        \begin{tikzpicture}
            \GraphInit[vstyle=Dijkstra]
            \SetGraphUnit{2}
            \begin{scope}[rotate=45]
                \Vertices{circle}{2,3,1,4}
            \end{scope}
            \Edge[style={->,bend left},label=-1](1)(3)
            \Edge[style={->,bend left},label=1](1)(4)
            \Edge[style={->,bend left},label=1](2)(3)
            \Edge[style={->,bend left},label=-1](2)(4)
            \Edge[style={->,bend left},label=-1](3)(1)
            \Edge[style={->,bend left},label=1](3)(2)
            \Edge[style={->,bend left},label=1](4)(1)
            \Edge[style={->,bend left},label=-1](4)(2)
        \end{tikzpicture}
    \end{center}
    which is strongly connected, hence $(e^{tA})_{t \ge 0}$ is irreducible by Proposition~\ref{prop:irreducibility-matrix-semigroup}.
\end{example}

\section{Asymptotic positivity and ultrapowers}
    \label{sec:ultrapowers}

The one-to-one correspondence between the positivity of a semigroup and the positivity of the associated resolvent facilitates the proof of various spectral results. This relationship becomes problematic when dealing with eventually positive semigroups as the associated resolvent operators need not even be positive in some neighbourhood of a spectral value \cite[Example~8.2]{DanersGlueckKennedy2016a}.
Nevertheless, to every uniformly eventually positive -- even uniformly asymptotically positive -- semigroup, one can associate a positive pseudo-resolvent on a suitable Banach lattice that retains desirable properties. This can be achieved by employing ultrapowers that have proven to be a valuable tool in the spectral analysis of eventual positivity \cite{AroraGlueck2021a, Glueck2017}. We start this section by fixing our notation and terminology about pseudo-resolvents and ultrapowers and recalling simple, yet important properties that we require in the sequel. 

Let $E$ be a Banach space and $D\subseteq \bbC$ be non-empty. A mapping $R:D\to \calL(E)$ is called a \emph{pseudo-resolvent} if it satisfies the resolvent identity
\[
    R(\lambda)-R(\mu)=(\mu-\lambda)R(\lambda)R(\mu)
\]
for all $\lambda,\mu\in D$. In particular, the resolvent of a linear operator with a non-empty resolvent set is a pseudo-resolvent. A number $\alpha \in \bbC$ is called an \emph{eigenvalue of the pseudo-resolvent} $R$ with corresponding eigenvector $0\ne x\in E$, if
\[
    (\lambda-\alpha)R(\lambda)x=x \quad \text{for all }\lambda \in D.
\]
Actually, this is implied by the seemingly weaker condition
\begin{equation}
    \label{eq:pseudo-resolvent-eigenvalue-sufficient}
    (\lambda-\alpha)R(\lambda)x=x \quad \text{for some }\lambda \in D;
\end{equation}
see \cite[Proposition~C-III.2.6(a)]{Nagel1986}. An important property of pseudo-resolvents that we use throughout is that if $F$ is closed $R$-invariant subspace of $E$ -- denote by $\widehat R(\lambda)$, the induced operators on the quotient space $E/F$ -- then $\left\{\widehat R(\lambda)\right\}_{\lambda \in D}$ and the family of restricted operators $\left\{R(\lambda)\restrict F\right\}_{\lambda\in D}$ both form pseudo-resolvents. If $E$ is a Banach lattice, then $R$ is called a \emph{positive pseudo-resolvent} if there exists a real number $r\in D$ such that $(r,\infty)\subseteq D$ and  $R(\lambda)$ is positive whenever $\lambda>r$. If $\{R(\lambda)\}_{\re \lambda>0}$ is a positive pseudo-resolvent, then
\begin{equation}
    \label{eq:positive-pseudo-resolvent}
        \modulus{ R(\lambda)f }\le R(\re \lambda) \modulus{f} \quad \text{for } \re\lambda>0 \text{ and }f\in E;
\end{equation}
see, for instance, \cite[Proof of Propostion~C-III-2.7]{Nagel1986}. For other properties of (positive) pseudo-resolvents, we refer to \cite[Pages~298-300]{Nagel1986} and \cite[Section~2]{RabigerWolff2000}.

Let $E$ be a Banach space, $\ell^\infty(E)$ denote the Banach space of bounded sequences in $E$ endowed with the supremum norm, and fix a free ultrafilter $\calU$ on $\bbN$. Denoting by $c_{\calU}(E)$, the bounded sequences in $E$ that converge to $0$ along $\calU$, i.e.,
\[
    c_{\calU}(E):= \{ x\in \ell^\infty(E) : \lim_{\calU}x =0\},
\]
the \emph{ultrapower of E with respect to $\calU$} is defined as the quotient space
\[
    E^{\calU} := \ell^\infty(E) / c_{\calU}(E).
\]
For each $x=(x_n)\in \ell^\infty(E)$, the corresponding element in $E^{\calU}$ is denoted by $x^{\calU}= (x_n)^{\calU}$. 
It turns out that $\norm{x^{\calU}}=\lim_{\calU} \norm{x_n}$. 
Also, for each $x\in E$, we write $x^{\calU}:= (x,x,\ldots)^{\calU}$.
Bounded operators $T:E\to F$ between Banach spaces $E$ and $F$ have a natural norm-preserving extension $T^{\calU} \in \calL(E^{\calU}, F^{\calU})$ given by $T^{\calU}x^{\calU}:= (Tx_n)^{\calU}$ for all $x=(x_n)\in \ell^{\infty}(E)$. Moreover, if $E$ and $F$ are Banach lattices, then $T$ is positive if and only if $T^{\calU}$ is positive.
A simple yet important property of ultrapowers that we use throughout is that every pseudo-resolvent $\{R(\lambda)\}_{\lambda \in D}$ on a Banach space gives rise to a pseudo-resolvent $\left\{R(\lambda)^{\calU}\right\}_{\lambda \in D}$ on the corresponding ultrapower \cite[Proposition~2.14(a)]{RabigerWolff2000}. For a comprehensive overview of ultrapowers, we refer the reader to \cite{Heinrich1980}, \cite[Pages~251-253]{Meyer-Nieberg1991}, and \cite[Section~V.1]{Schaefer1974}.

\begin{proposition}
    \label{prop:eigenvector-fixed-space}
    Let $(e^{tA})_{t \ge 0}$ be a bounded $C_0$-semigroup on a complex Banach space $E$.
    Fix a free ultrafilter $\calU$ on $\bbN$.

    If $i\beta \in i\bbR$ is an approximate eigenvalue of $A$ with approximate eigenvector $(x_n)$, then $i\beta$
    is an eigenvalue of the pseudo-resolvent  $\left\{\Res(\lambda, A)^{\calU}\right\}_{\re\lambda>0}$ with eigenvector $(x_n)^{\calU}$. Moreover, there exists a subsequence $(y_{n})$ of $(x_n)$  such that $(y_{n})^{\calU}$ lies in the fixed space of the bounded operator
    \[
        S^\beta:=  \begin{cases}
                    \left(\left(e^{ nA}\right)_n\right)^{\calU} \qquad & \text{if }\beta=0\\
                    \left(\left(e^{\frac{2\pi n}{\beta}A}\right)_n\right)^{\calU} \qquad & \text{if }\beta\ne 0.
                \end{cases}
    \]
\end{proposition}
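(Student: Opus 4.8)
The plan is to combine two elementary facts: the bound $\norm{\Res(\lambda,A)}\le M/\re\lambda$ for $\re\lambda>0$ coming from boundedness of the semigroup (here $M:=\sup_{t\ge 0}\norm{e^{tA}}<\infty$), and a variation-of-parameters estimate quantifying how little $e^{tA}$ moves an approximate eigenvector. Write $\varepsilon_n:=\norm{(A-i\beta)x_n}$, and recall that by the definition of an approximate eigenvector we have $x_n\in\dom A$, $\norm{x_n}=1$ and $\varepsilon_n\to 0$.

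For the first assertion I would fix $\lambda$ with $\re\lambda>0$, so that $\lambda\in\resSet(A)$ and $\lambda\ne i\beta$, and apply $\Res(\lambda,A)$ to the decomposition $(\lambda-i\beta)x_n=(\lambda-A)x_n+(A-i\beta)x_n$ to get
\[
    (\lambda-i\beta)\,\Res(\lambda,A)x_n=x_n+\Res(\lambda,A)(A-i\beta)x_n .
\]
The last term has norm at most $(M/\re\lambda)\varepsilon_n\to 0$, hence vanishes along $\calU$; since $\Res(\lambda,A)^{\calU}(x_n)^{\calU}=(\Res(\lambda,A)x_n)^{\calU}$, passing to the ultrapower yields $(\lambda-i\beta)\,\Res(\lambda,A)^{\calU}(x_n)^{\calU}=(x_n)^{\calU}$ for every such $\lambda$. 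As $\norm{(x_n)^{\calU}}=\lim_{\calU}\norm{x_n}=1\ne 0$, this says precisely (invoking \eqref{eq:pseudo-resolvent-eigenvalue-sufficient} if one prefers to check a single $\lambda$) that $i\beta$ is an eigenvalue of the pseudo-resolvent $\left\{\Res(\lambda,A)^{\calU}\right\}_{\re\lambda>0}$ with eigenvector $(x_n)^{\calU}$.

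For the second assertion I would first isolate the key estimate: if $t\ge 0$ satisfies $e^{-it\beta}=1$ and $x\in\dom A$, then differentiating $r\mapsto e^{-ir\beta}e^{rA}x$ (whose derivative is $e^{-ir\beta}e^{rA}(A-i\beta)x$) and integrating over $[0,t]$ gives
\[
    \norm{e^{tA}x-x}=\norm{e^{-it\beta}e^{tA}x-x}\le\int_0^t\norm{e^{rA}(A-i\beta)x}\dx r\le t\,M\,\norm{(A-i\beta)x}.
\]
Setting $t_n:=2\pi n/\beta$ when $\beta\ne 0$ (assuming, without loss of generality, $\beta>0$) and $t_n:=n$ when $\beta=0$, so that $e^{-it_n\beta}=1$, and applying this with $t=t_n$, $x=x_n$ yields $\norm{e^{t_nA}x_n-x_n}\le t_nM\varepsilon_n$. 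Since $t_n$ grows linearly in $n$ while $\varepsilon_n$ only tends to $0$, the product $t_n\varepsilon_n$ need not vanish, and this is exactly why $(x_n)^{\calU}$ itself cannot be expected to be fixed and a subsequence has to enter. I would then choose a strictly increasing $\sigma\colon\bbN\to\bbN$ with $\varepsilon_{\sigma(n)}\le n^{-2}$ for all $n$ (possible since $\varepsilon_m\to 0$) and put $y_n:=x_{\sigma(n)}$; applying the estimate with $t=t_n$ and $x=y_n$, using $\norm{(A-i\beta)y_n}=\varepsilon_{\sigma(n)}$, gives
\[
    \norm{e^{t_nA}y_n-y_n}\le t_n\,M\,\varepsilon_{\sigma(n)}\le\frac{t_n\,M}{n^2}\longrightarrow 0 .
\]
Hence $\lim_{\calU}\norm{e^{t_nA}y_n-y_n}=0$, and since $S^\beta$ is by definition the ultrapower of the bounded operator sequence $(e^{t_nA})_n$, acting by $(z_n)^{\calU}\mapsto(e^{t_nA}z_n)^{\calU}$, this reads $S^\beta(y_n)^{\calU}=(y_n)^{\calU}$; as $\norm{(y_n)^{\calU}}=\lim_{\calU}\norm{y_n}=1$, the element $(y_n)^{\calU}$ is non-zero and lies in the fixed space of $S^\beta$.

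The only genuine obstacle is the one just flagged: the times $t_n$ at which one must evaluate the semigroup grow without bound, so the approximate-eigenvector defects $\varepsilon_n$ must be driven to $0$ fast enough relative to $t_n$, which is achievable only after thinning the sequence; everything else is the two routine estimates above together with the bookkeeping of transporting identities to the ultrapower and keeping the index shift between $(x_n)$ and the operator sequence $(e^{t_nA})_n$ straight.
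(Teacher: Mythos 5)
Your proposal is correct and follows essentially the same route as the paper: the first part uses the same resolvent identity $(\lambda-i\beta)\Res(\lambda,A)x_n = x_n + \Res(\lambda,A)(A-i\beta)x_n$ together with the uniform bound $\norm{\Res(\lambda,A)}\le M/\re\lambda$, and the second part uses the same variation-of-parameters identity and the same thinning trick (your choice $\varepsilon_{\sigma(n)}\le n^{-2}$ is just a concrete instance of the paper's condition $n\norm{(i\beta-A)y_n}\to 0$). Your explicit remark that $(x_n)^{\calU}$ itself need not be fixed, which is why the subsequence is unavoidable, is exactly the point of the paper's argument.
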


\begin{remarks}
    \label{rem:eigenvector-fixed-space}
    (a) Since every spectral value on the boundary of the spectrum of a closed operator is its approximate eigenvalue \cite[Propostion~IV.1.10]{EngelNagel2000}, the assertion of Proposition~\ref{prop:eigenvector-fixed-space} can be rephrased as:~If $i\beta \in i\bbR$ is a spectral value of $A$, then it is an eigenvalue of the pseudo-resolvent $\left\{\Res(\lambda, A)^{\calU}\right\}_{\re\lambda>0}$ with an eigenvector in $\Fix S^{\beta}$.
    
    (b) The assertion that $i\beta$
    is an eigenvalue of the extended pseudo-resolvent on $E^{\calU}$ in Proposition~\ref{prop:eigenvector-fixed-space} is actually well-known \cite[Proposition on Page~78]{Nagel1986}. We include the proof for the sake of completeness.
\end{remarks}

For a linear operator $A$ on a Banach space, its \emph{spectral bound} will be denoted by
\[
    \spb(A):= \sup_{\lambda \in \spec(A)} \re \lambda \in [-\infty,\infty].
\]
Keep in mind that if $A$ generates a $C_0$-semigroup with growth bound $\gbd(A)$, then $\spb(A)\le \gbd(A)$.

\begin{proof}[Proof of Proposition~\ref{prop:eigenvector-fixed-space}]
    First of all, as the semigroup is bounded and $i\beta$ is a spectral value of $A$, so $\spb(A)= 0$. 
    Let $(x_n)$ be a normalized approximate eigenvector of $A$ corresponding to $i\beta$. 
    We see at once from the identity
    \[
        x_n - (\lambda-i\beta)\Res(\lambda, A)x_n= \Res(\lambda,A)(i\beta-A)x_n,
    \]
    that $x^{\calU} := (x_n)^{\calU}\in E^{\calU}$ is an eigenvector of the pseudo-resolvent $\left\{\Res(\lambda, A)^{\calU}\right\}_{\re\lambda>0}$ corresponding to the eigenvalue $i\beta$. 
    
    For the second assertion, we simply choose a subsequence $(y_n)$ of $(x_n)$ such that $n\norm{(i \beta - A)y_n}\to 0$ as $n\to \infty$. Now, the identity
    \[
        y_n-e^{-i\beta t}  e^{t A} y_n = \int_0^t e^{-(i\beta-A) s}\left(i\beta-A\right)y_n\dx s
    \]
    holds for all $t\ge 0$ and all $n\in \bbN$ by \cite[Lemma~II.1.9]{EngelNagel2000}. 
    In particular, if $\beta=0$, then
    \[
        \norm{y_n-  e^{n A} y_n}  = \norm{\int_0^n e^{sA}A y_n\dx s}\le   nM\norm{Ay_n} \to 0
    \]
    as $n\to \infty$; where $M:=\sup_{t\ge 0}\norm{e^{tA}}<\infty$ due to boundedness of the semigroup.
    Likewise, if $\beta\ne 0$, we get 
    \begin{align*}
        \norm{y_n-  e^{\frac{2\pi n}{\beta} A} y_n} & = \norm{\int_0^{\frac{2\pi n}{\beta}} e^{-(i\beta-A) s}\left(i\beta-A\right)y_n\dx s}\\
                                                    & \le  \frac{2\pi nM}{\beta}\norm{(i \beta - A)y_n} \to 0
    \end{align*}
    as $n\to \infty$. In either case, $S^{\beta}(y_{n})^{\calU}=(y_{n})^{\calU}$.
\end{proof}

If $\lambda\in \bbC$ is an approximate eigenvalue of a bounded linear operator $T$ on a Banach space $E$, then we know that it is an eigenvalue of $T^{\calU}$ for every free ultrafilter $\calU$ on $\bbN$ by \cite[Theorem~V.1.4(ii)]{Schaefer1974}. In fact, if $\lambda$ is not in the point spectrum of $T$, then it turns out that the eigenspace $\ker(\lambda-T^{\calU})$ is even infinite-dimensional \cite[Lemma~C-III-3.10]{Nagel1986}. We generalise this to the setting of Proposition~\ref{prop:eigenvector-fixed-space}:

\begin{proposition}
    \label{prop:infinite-dimensional-eigenspace-resolvent}
    Let $(e^{tA})_{t \ge 0}$ be a bounded $C_0$-semigroup on a complex Banach space $E$.
    Fix a free ultrafilter $\calU$ on $\bbN$ and on the ultrapower $E^{\calU}$ consider the bounded linear operators 
    \[
        S^\beta:=  \begin{cases}
                    \left(\left(e^{ nA}\right)_n\right)^{\calU} \qquad & \text{if }\beta=0\\
                    \left(\left(e^{\frac{2\pi n}{\beta}A}\right)_n\right)^{\calU} \qquad & \text{if }\beta\in \bbR \setminus \{0\}.
                \end{cases}
    \]

    If $i\beta \in i\bbR$ is an approximate eigenvalue of $A$ that is not in the point spectrum of $A$, then for $\lambda \in \bbC$ with $\re \lambda>0$, the $\lambda$-independent set
    \[
        K:= \left\{x^{\calU} \in \Fix S^\beta: (\lambda-i\beta)\Res(\lambda,A)^{\calU}x^{\calU}=x^{\calU}\right\}
    \]
    is infinite-dimensional.
\end{proposition}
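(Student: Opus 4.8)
The plan is to combine Proposition~\ref{prop:eigenvector-fixed-space} with the classical fact \cite[Lemma~C-III-3.10]{Nagel1986} that, when $\lambda$ is an approximate eigenvalue of a bounded operator $T$ that is \emph{not} an eigenvalue, the eigenspace $\ker(\lambda - T^{\calU})$ is infinite-dimensional. The subtlety is that we want an infinite-dimensional space inside $\Fix S^\beta$ \emph{simultaneously} consisting of eigenvectors of the extended pseudo-resolvent, so we cannot simply quote that lemma for a single operator. Instead, I would work with a second ultrapower. Let $\calV$ be a free ultrafilter on $\bbN$ and consider the double ultrapower $(E^{\calU})^{\calV}$, or equivalently argue by a diagonal-extraction-plus-ultrafilter device directly on $E$.

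First I would recall from Proposition~\ref{prop:eigenvector-fixed-space} (and Remark~\ref{rem:eigenvector-fixed-space}(a)) that for any normalized approximate eigenvector $(x_n)$ of $A$ for $i\beta$, after passing to a subsequence $(y_n)$ with $n\norm{(i\beta - A)y_n}\to 0$, the element $(y_n)^{\calU}$ lies in $\Fix S^\beta$ and satisfies $(\lambda - i\beta)\Res(\lambda,A)^{\calU}(y_n)^{\calU} = (y_n)^{\calU}$ for all $\re\lambda>0$; in particular $K\ne\{0\}$. The point is that $i\beta$ is \emph{not} an eigenvalue of $A$, so the approximate eigenvectors cannot converge in norm; one can therefore choose, for each $k$, a normalized approximate eigenvector $(y_n^{(k)})_n$ (with the extra decay $n\norm{(i\beta-A)y_n^{(k)}}\to 0$) producing an element $z_k := (y_n^{(k)})_n^{\calU}\in K$, and arrange that the $z_k$ are ``almost orthogonal'' in the sense needed for linear independence. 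Concretely: since $K$ is a closed subspace of the Banach space $E^{\calU}$, if it were finite-dimensional one could use a compactness/Riesz-lemma argument to contradict the fact that $i\beta$ admits approximate eigenvectors that stay far from any finite-dimensional subspace. Indeed, if $K$ were spanned by $z_1,\dots,z_m$, pick $\varphi_1^{\calU},\dots,\varphi_m^{\calU}$ in $(E^{\calU})'$ separating them; because each $z_k$ is represented by a bounded sequence, one reduces to the following statement on $E$ itself: there is a normalized sequence $(w_n)$ with $n\norm{(i\beta-A)w_n}\to 0$ and $\duality{\varphi_j}{w_n}\to 0$ for suitable representatives $\varphi_j$ of the functionals, which contradicts finite-dimensionality.

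The cleaner route, which I would actually write, is: apply Proposition~\ref{prop:eigenvector-fixed-space} \emph{within} the ultrapower. The operator $\Res(\lambda,A)^{\calU}$ still has $i\beta$ as an approximate eigenvalue on $E^{\calU}$ (take the constant sequence image of $(y_n)$), and \cite[Lemma~C-III-3.10]{Nagel1986} applies to the bounded operator $\Res(\lambda_0,A)^{\calU}$ for a fixed $\lambda_0$ with $\re\lambda_0>0$: since $i\beta\notin\pntSpec(A)$ one checks $\frac{1}{\lambda_0 - i\beta}\notin\pntSpec\bigl(\Res(\lambda_0,A)^{\calU}\bigr)$ (an eigenvector would, via the resolvent identity and \eqref{eq:pseudo-resolvent-eigenvalue-sufficient}, produce a genuine eigenvector of $A^{\calU}$ and hence, by a standard lifting, of $A$). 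Therefore $\ker\bigl(\tfrac{1}{\lambda_0-i\beta} - \Res(\lambda_0,A)^{\calU\calV}\bigr)$ in the \emph{second} ultrapower $(E^{\calU})^{\calV}$ is infinite-dimensional, and by \eqref{eq:pseudo-resolvent-eigenvalue-sufficient} this kernel is exactly the $\lambda$-independent eigenspace of the iterated pseudo-resolvent. It remains to push this infinite-dimensional space into $\Fix S^\beta$: running the integral-estimate argument from the proof of Proposition~\ref{prop:eigenvector-fixed-space} verbatim inside the ultrapower shows that after a further subsequence-along-$\calV$ selection (which costs nothing, as it only shrinks within an infinite-dimensional space) every such eigenvector is fixed by $(S^\beta)^{\calV}$, and one identifies $(E^{\calU})^{\calV}$, $\Res(\lambda,A)^{\calU\calV}$, $(S^\beta)^{\calV}$ with $E^{\calU}$, $\Res(\lambda,A)^{\calU}$, $S^\beta$ via the canonical isometric lattice isomorphism $(E^{\calU})^{\calV}\cong E^{\calW}$ for an appropriate ultrafilter $\calW$ on $\bbN$ (or simply re-index), concluding that $K$ itself is infinite-dimensional.

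The main obstacle is the bookkeeping in the last step: ensuring that the infinite-dimensional eigenspace furnished by \cite[Lemma~C-III-3.10]{Nagel1986} can be taken to sit inside $\Fix S^\beta$, rather than merely inside the eigenspace of the pseudo-resolvent. I expect this to be handled exactly as in Proposition~\ref{prop:eigenvector-fixed-space} — the extra decay $n\norm{(i\beta-A)y_n}\to 0$ along a subsequence is the crucial gadget — but one must be careful that the subsequence extraction is performed \emph{before} forming the ultrapower (or, equivalently, absorbed into the choice of ultrafilter), so that the fixed-space membership is genuinely available on all of the resulting infinite-dimensional space and not just on a single vector.
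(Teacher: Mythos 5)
There is a genuine error in the route you say you would actually write. You claim that $\tfrac{1}{\lambda_0-i\beta}\notin\pntSpec\bigl(\Res(\lambda_0,A)^{\calU}\bigr)$ because an eigenvector would lift to a genuine eigenvector of $A$. This is false, and in fact it contradicts Proposition~\ref{prop:eigenvector-fixed-space}, which shows precisely that $(x_n)^{\calU}$ \emph{is} an eigenvector of the pseudo-resolvent $\{\Res(\lambda,A)^{\calU}\}_{\re\lambda>0}$ for the eigenvalue $i\beta$: the whole point of passing to the ultrapower is that approximate eigenvalues become genuine eigenvalues there, and these eigenvectors do not descend to $E$. Consequently the hypothesis of \cite[Lemma~C-III-3.10]{Nagel1986} (approximate eigenvalue that is \emph{not} an eigenvalue) fails for the operator $\Res(\lambda_0,A)^{\calU}$, and the double-ultrapower argument collapses. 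Your first route correctly identifies the relevant fact --- since $i\beta\notin\pntSpec(A)$ the normalized approximate eigenvectors cannot accumulate, hence after a subsequence are $\epsilon$-separated --- but the step ``arrange that the $z_k$ are almost orthogonal'' is never carried out, and the concluding contradiction via separating functionals is not a proof.

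The missing idea is a shift trick that makes the first route work with a \emph{single} sequence. Pass to a subsequence $(x_n)$ that is $\epsilon$-separated, and then (by Proposition~\ref{prop:eigenvector-fixed-space}) to a further subsequence with $n\norm{(i\beta-A)x_n}\to 0$, so that $(x_n)^{\calU}\in K$. Now for each $k\in\bbN$ consider the shifted sequence $y_k^{\calU}:=(x_{n+k})_n^{\calU}$. The decay $n\norm{(i\beta-A)x_{n+k}}\le (n+k)\norm{(i\beta-A)x_{n+k}}\to 0$ and the integral estimate from Proposition~\ref{prop:eigenvector-fixed-space} survive the shift, so every $y_k^{\calU}$ lies in $K$; and
\[
    \norm{y_k^{\calU}-y_m^{\calU}}=\lim_{\calU}\norm{x_{n+k}-x_{n+m}}\ge\epsilon
\]
for $k\ne m$. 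Hence the unit ball of $K$ contains an infinite $\epsilon$-separated set, is not relatively compact, and $K$ is infinite-dimensional. No second ultrapower and no appeal to \cite[Lemma~C-III-3.10]{Nagel1986} are needed.
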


\begin{proof}
    Let $(x_n)$ be a normalized approximate eigenvector of $A$ corresponding to $i\beta$. Since $i\beta$ is not an eigenvalue of $A$ but accumulation points of the approximate eigenvectors are eigenvectors, so $(x_n)$ can not have accumulation points. Replacing $(x_n)$ with a subsequence, we can therefore find an $\epsilon>0$ such that $\norm{x_n-x_m}\ge \epsilon$ whenever $n\ne m$.

    On the other hand, since $(x_n)$ is an approximate eigenvector of $A$ corresponding to $i\beta$,
    Proposition~\ref{prop:eigenvector-fixed-space} allows us to further replace $(x_n)$ by a subsequence such that $(x_{n})^{\calU}\in K$. As a result, the unit ball of $K$ contains the sequence $\left(y_k^{\calU}\right)$
    given by $y_k^{\calU} := (x_{n+k})^{\calU}$  that satisfies
    \[
        \norm{ y_k^{\calU}- y_m^{\calU} } = \lim_{\calU} \norm{ x_{n+k} - x_{n+m}  } \ge \epsilon
    \]
    whenever $k\ne m$. This means that the unit ball of $K$ is not relatively compact and so $K$ must be infinite-dimensional.
\end{proof}

Since resolvent operators commute with the semigroup,
Propositions~\ref{prop:eigenvector-fixed-space} and~\ref{prop:infinite-dimensional-eigenspace-resolvent} hint that if the semigroup is eventually positive, then $\left\{\Res(\lambda, A)^{\calU}\restrict{\Fix S^\beta}\right\}_{\re\lambda>0}$ could be the positive pseudo-resolvent that we are seeking. However, generally, $\Fix S^\beta$ is not a Banach lattice which renders the known results on positive pseudo-resolvents inapplicable. Nevertheless, we know from \cite[Proposition~7.2]{Glueck2017} that the fixed space of the dual of a positive power-bounded operator is always a Banach lattice (post-equivalent renorming). This indicates that working with biduals might be the right approach and indeed this is the content of Theorem~\ref{thm:ultrapower-bidual}.

Recall that a $C_0$-semigroup $(e^{tA})_{t \ge 0}$ on a Banach lattice $E$ with $\spb(A)=0$ is called \emph{uniformly asymptotically positive} if $(e^{tA})_{t \ge 0}$ is bounded and for each $\epsilon>0$, there exists $t_0\ge 0$ such that
\[
    \dist( e^{tA}f, E_+)\le \epsilon \norm{f}\quad \text{for all }t\ge t_0 \text{ and } 0\le f \in E;
\]
if the spectral bound is not $-\infty$ but different from $0$, then it is said to be \emph{uniformly asymptotically positive} if the above is true for the rescaled semigroup $\left(e^{t(A-\spb(A))}\right)_{t \ge 0}$.

\begin{theorem}
    \label{thm:ultrapower-bidual}
    Let $(e^{tA})_{t \ge 0}$ be a uniformly asymptotically positive $C_0$-semigroup on a complex Banach lattice $E$, 
    let $\calU$ be a  free ultrafilter on $\bbN$, and let $c \in \bbR\setminus \{0\}$. 
    The bounded operators
    \[
        S:=\left(\left(e^{c n (A-\spb(A))}\right)_n\right)^{\calU} \in \calL\left(E^{\calU}\right)
        \quad \text{ and } \quad 
        R(\argument):=\left(\Res(\argument, A)^{\calU}\right)'' \in \calL\left( \left(E^{\calU}\right)''\right)
    \]
    satisfy the following assertions:
    \begin{enumerate}[\upshape (a)]
        \item The operator $S$ is positive, power bounded, and the fixed space $F:=\Fix S''$ is a Banach lattice with respect to an equivalent norm.
        \item The operator family $\left\{ R(\lambda)\right\}_{\re \lambda>\spb(A)}$ is $F$-invariant and $\left\{ R(\lambda)\restrict{F}\right\}_{\re \lambda>\spb(A)} $ is a positive pseudo-resolvent on $F$.
    \end{enumerate}
\end{theorem}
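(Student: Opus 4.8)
The plan is to reduce first to the case $\spb(A)=0$ by the usual rescaling, so that $(e^{tA})_{t\ge 0}$ is bounded with $M:=\sup_{t\ge 0}\norm{e^{tA}}<\infty$; since $\spb(A)\le\gbd(A)\le 0$ one then automatically has $\gbd(A)=0$, hence $\{\re\lambda>0\}\subseteq\resSet(A)$ and $\Res(\lambda,A)=\int_0^\infty e^{-\lambda s}e^{sA}\dx s$ for $\re\lambda>0$. We may also assume $c>0$, the case $c<0$ being analogous. Write $S=\left(\left(e^{cnA}\right)_n\right)^{\calU}$, and recall the standard description $x^{\calU}=(x_n)^{\calU}\ge 0$ in $E^{\calU}$ if and only if $\lim_{\calU}\dist(x_n,E_+)=0$.

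For part~(a), I would first check that $S$ is positive: given $0\le g^{\calU}=(g_n)^{\calU}$ with $g_n\ge 0$, uniform asymptotic positivity provides, for each $\epsilon>0$, some $t_0$ with $\dist(e^{cnA}g_n,E_+)\le\epsilon\norm{g_n}$ as soon as $cn\ge t_0$, i.e.\ for all but finitely many $n$; so $\lim_{\calU}\dist(e^{cnA}g_n,E_+)\le\epsilon\norm{g^{\calU}}$ and, letting $\epsilon\downarrow 0$, $Sg^{\calU}\ge 0$. Power-boundedness is immediate from $\norm{S^k}=\lim_{\calU}\norm{e^{kcnA}}\le M$. Consequently $S'$ is a positive, power-bounded operator on the Banach lattice $(E^{\calU})'$, and applying \cite[Proposition~7.2]{Glueck2017} to $S'$ shows that $F=\Fix(S'')=\Fix\bigl((S')'\bigr)$ is a Banach lattice under an equivalent norm, with positive cone $F\cap\bigl((E^{\calU})''\bigr)_+$; this is~(a).

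For part~(b), the resolvent $\{\Res(\lambda,A)\}_{\re\lambda>0}$ is a pseudo-resolvent, so its ultrapower extension is one \cite[Proposition~2.14(a)]{RabigerWolff2000}, and applying the isometric algebra homomorphism $T\mapsto T''$ shows $\{R(\lambda)\}_{\re\lambda>0}$ is a pseudo-resolvent on $(E^{\calU})''$. Since $\Res(\lambda,A)$ commutes with each $e^{tA}$, $\Res(\lambda,A)^{\calU}$ commutes with $S$ and hence $R(\lambda)$ commutes with $S''$, so $R(\lambda)$ leaves $F=\Fix(S'')$ invariant and $\{R(\lambda)\restrict F\}_{\re\lambda>0}$ is a pseudo-resolvent on $F$. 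The one remaining -- and genuinely delicate -- point is positivity of $R(\lambda)\restrict F$ for real $\lambda>0$: one should \emph{not} use $\Res(\lambda,A)$ directly, since it may fail to be positive near the spectrum, but rather the composition $\Res(\lambda,A)e^{tA}$. From the identity
\[
    \lambda\Res(\lambda,A)e^{tA}=\int_0^\infty\lambda e^{-\lambda v}e^{(t+v)A}\dx v\qquad(\lambda>0\text{ real}),
\]
together with $\dist(\,\cdot\,,E_+)=\norm{(\,\cdot\,)^-}$ and subadditivity of the negative part, one gets $\dist\bigl(\lambda\Res(\lambda,A)e^{tA}f,E_+\bigr)\le\epsilon\norm{f}$ for all $0\le f\in E$ and $t\ge t_0$. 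Exactly as in part~(a) this makes $U^{\calU}:=\left(\left(\lambda\Res(\lambda,A)e^{cnA}\right)_n\right)^{\calU}\in\calL(E^{\calU})$ positive, hence $(U^{\calU})''=\lambda R(\lambda)S''$ positive on $(E^{\calU})''$. Finally, for $0\le x\in F$ the relation $S''x=x$ yields $\lambda R(\lambda)x=\lambda R(\lambda)S''x=(U^{\calU})''x\ge 0$, and since $R(\lambda)x\in F$ by invariance, $R(\lambda)x$ lies in the positive cone of $F$; as $(0,\infty)\subseteq\{\re\lambda>0\}$, this shows $\{R(\lambda)\restrict F\}_{\re\lambda>0}$ is a positive pseudo-resolvent on $F$.

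I expect the positivity step in~(b) to be the main obstacle: the resolvent of an eventually positive generator can be positive nowhere near its spectrum, so the argument has to be routed through $\Res(\lambda,A)e^{tA}$, which \emph{does} become $\epsilon$-positive for large $t$, and the identity $S''x=x$ on $\Fix S''$ is exactly what lets one discard the extra semigroup factor afterwards -- with the further bookkeeping that positivity is preserved under the ultrapower map and the bidual map. By contrast, part~(a) is essentially a citation of \cite[Proposition~7.2]{Glueck2017} once positivity and power-boundedness of $S$ have been extracted from the asymptotic positivity of the semigroup along $\calU$.
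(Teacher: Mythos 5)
Your proof is correct and takes essentially the same route as the paper: positivity and power-boundedness of $S$ via the ultrapower description of the cone, \cite[Proposition~7.2]{Glueck2017} applied to the adjoint for part~(a), and for part~(b) the Laplace-transform representation combined with the identity $S''x=x$ on $F$ to discard the extra semigroup factor. The only (cosmetic) difference is that you estimate $\dist\bigl(\lambda\Res(\lambda,A)e^{cnA}f,E_+\bigr)$ before passing to the ultrapower, whereas the paper passes to the ultrapower first and integrates $(e^{tA})^{\calU}Sx^{\calU}\ge 0$; both yield positivity of $R(\lambda)S''$.
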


For the proof of Theorem~\ref{thm:ultrapower-bidual}, we need the following simple observation about the positive cone of the ultrapower:

\begin{lemma}
    \label{lem:ultrapower-positive}
    Let $E$ be a Banach lattice and let $\calU$ be a  free ultrafilter on $\bbN$. Let $x^{\calU}=(x_n)^{\calU} \in E^{\calU}$ such that $\dist(x_n, E_+)\to 0$ as $n\to\infty$, then $x^{\calU} \ge 0$.
\end{lemma}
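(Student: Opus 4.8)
The plan is to approximate $x^{\calU}$ by a manifestly positive element of the ultrapower. First I would use the hypothesis to choose, for each $n\in\bbN$, an element $y_n\in E_+$ with
$\norm{x_n-y_n}\le \dist(x_n,E_+)+\tfrac1n$, which is possible by the definition of the distance to $E_+$ as an infimum. Then $\norm{x_n-y_n}\to 0$ as $n\to\infty$, and since $(x_n)$ is bounded (it represents an element of $E^{\calU}$, so $(x_n)\in\ell^\infty(E)$), the sequence $(y_n)$ is bounded as well; hence $y^{\calU}:=(y_n)^{\calU}$ is a well-defined element of $E^{\calU}$.

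Next I would verify two elementary facts. On the one hand, $y^{\calU}\ge 0$ in $E^{\calU}$: this is immediate because $y_n\in E_+$ for \emph{every} $n$, and the positive cone of $E^{\calU}$ is precisely the set of classes possessing a representative all of whose terms (equivalently, $\calU$-almost all of whose terms) lie in $E_+$. On the other hand, $x^{\calU}=y^{\calU}$: using the norm formula $\norm{z^{\calU}}=\lim_{\calU}\norm{z_n}$ recalled above, we get $\norm{x^{\calU}-y^{\calU}}=\lim_{\calU}\norm{x_n-y_n}=0$, because the ordinary limit of $\norm{x_n-y_n}$ is $0$ and a free ultrafilter limit agrees with the ordinary limit whenever the latter exists. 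Combining the two facts yields $x^{\calU}=y^{\calU}\ge 0$.

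There is essentially no obstacle here; the only points that deserve a word of care are checking that $(y_n)$ is bounded, so that it genuinely represents an element of $E^{\calU}$, and invoking the standard description of the positive cone $\left(E^{\calU}\right)_+$ in terms of representatives (as found, e.g., in \cite[Section~V.1]{Schaefer1974}).
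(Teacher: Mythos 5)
Your proof is correct and follows essentially the same approach as the paper: approximate each $x_n$ by a positive element $y_n$ with error tending to $0$, observe that $(x_n-y_n)\in c_{\calU}(E)$, and conclude $x^{\calU}=y^{\calU}\ge 0$. The only cosmetic difference is that the paper first passes to a subsequence before choosing the approximants, whereas you work directly with the whole sequence, which is if anything slightly cleaner.
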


\begin{proof}
    By assumption, we can find a strictly increasing sequence $(n_k)$ in $\bbN$ such that $\dist(x_{n_k}, E_+)<\frac1k$ for all $k\in \bbN$. Thus for each $k\in \bbN$, there exists $0\le y_k\in E$ such that $\norm{x_{n_k}-y_k}<\frac1k$. So, for each $\epsilon>0$, the set
    \[
        A_{\epsilon}:= \{k\in \bbN: \norm{x_{n_k}-y_k}<\epsilon\} \in \calU
    \]
    due to cofiniteness (recall that free ultrafilters are finer than the Fr\'echet filter); this shows that the $(x_{n_k}-y_k)\in c_{\calU}(E)$. Since $(y_k)\subseteq E_+$, it follows that $x^{\calU}\in E^{\calU}_+$.
\end{proof}

\begin{proof}[Proof of Theorem~\ref{thm:ultrapower-bidual}]
    (a)
    Uniform asymptotic positivity of the semigroup guarantees that $S$ is power bounded and due to Lemma~\ref{lem:ultrapower-positive} also positive. In turn, $S''$ is the dual of
    a positive power-bounded operator on a dual Banach lattice $\left(E^{\calU}\right)''$, so we may infer from \cite[Proposition~7.2]{Glueck2017} that $F$ is a Banach lattice with respect to an equivalent norm.

    (b) As the semigroup operators commute with the resolvent of the generator, the operator family $\{R(\lambda)\}_{\re \lambda>\spb(A)}$ leaves $F$ invariant. In particular, the operator family $\left\{ R(\lambda)\restrict{F}\right\}_{\re \lambda>\spb(A)}$ forms a pseudo-resolvent. 
    
    Finally, assume for simplicity that $\spb(A)=0$,
    fix $\lambda >0$, and let $0\le x^{\calU}=(x_n)^{\calU}\in E^{\calU}$. 
    Boundedness of the semigroup allows us to use the Laplace transform representation of the resolvent on the right half-plane, which yields
    \[
        \Res(\lambda,A)e^{c n A}x_n = \int_0^\infty e^{-\lambda t}e^{tA}e^{cnA}x_n\dx t.
    \]
    Therefore, $\Res(\lambda,A)^{\calU}Sx^{\calU}=\int_0^{\infty} e^{-\lambda t}(e^{tA})^{\calU}S x^{\calU}\dx t$, which is positive due to the asymptotic positivity of the semigroup and positivity of $S$. We have thus proved that $\Res(\lambda,A)^{\calU}S$ and in turn $R(\lambda)S''$ is positive for each $\lambda>0$. Consequently, $\left\{ R(\lambda)\restrict{F}\right\}_{\re \lambda>0}$ is a positive pseudo-resolvent. 
\end{proof}

\section{Peripheral (point) spectrum}
    \label{sec:peripheral-spectrum}

The classical Perron-Frobenius theorem tells us that positive matrices have a cyclic peripheral spectrum. Inspired by this, conditions for cyclicity of the peripheral spectrum are omnipresent for positive operators  \cite{Glueck2016a,Glueck2018}, uniformly eventually positive operators \cite[Theorem~7.1]{Glueck2017}, and positive semigroups \cite[Section~C-III.2]{Nagel1986}. For the case of eventually positive semigroups, a positive result for the cyclicity of the peripheral point spectrum has been known due to Glück \cite[Theorem~6.3.2]{Glueck2016}. In this section, we give conditions that ensure cyclicity of the peripheral spectrum generalising the analogous results for positive (irreducible) semigroups \cite[Theorems~C-III-3.10 and~C-III-3.12]{Nagel1986}.

\subsection*{Peripheral spectrum of persistently irreducible semigroups}

The peripheral spectrum of a positive irreducible semigroup on a Banach lattice is contained in the point spectrum provided that the spectral bound is a pole of the resolvent; see \cite[Lemma~14.16]{BatkaiKramarRhandi2017} for the statement and proof of \cite[Theorem~C-III-3.12]{Nagel1986} for the argument. Leveraging the results of Section~\ref{sec:ultrapowers}, we generalise this for semigroups that are bounded, uniformly eventually positive, and persistently irreducible. Recall that the \emph{peripheral spectrum} of a closed linear operator $A$ is defined as
\[
    \perSpec(A):= \{ \lambda\in \spec(A): \re \lambda=\spb(A)\}.
\]
Moreover, for a vector $u$ in a Banach space $E$ and a functional $\varphi \in E'$, the (at most) rank-one operator $u\otimes \varphi \in \calL(E)$ is defined as $f\mapsto \duality{\varphi}{f}u$.

\begin{proposition}
    \label{prop:peripheral-in-point}
    Let $E$ be a complex Banach lattice and let $(e^{tA})_{t \ge 0}$ be a  real, uniformly eventually positive, and persistently irreducible semigroup on $E$.

    If $\spb(A)$ is a pole of the resolvent $\Res(\argument,A)$ and the rescaled semigroup $\left(e^{t(A-\spb(A))}\right)_{t \ge 0}$ is bounded, then every element of $\perSpec(A)$ is an eigenvalue of $A$.
\end{proposition}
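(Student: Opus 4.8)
The plan is to reduce to the case $\spb(A)=0$ and then treat the spectral value $0$ and the other peripheral values separately. After rescaling, $(e^{tA})_{t\ge 0}$ is bounded, real, uniformly eventually positive, and persistently irreducible, and $0$ is a pole of $\Res(\argument,A)$. Since the semigroup is bounded, $\norm{\Res(\lambda,A)}\le M/\re\lambda$ on the right half-plane, which forces every pole of $\Res(\argument,A)$ on $i\bbR$ to be of first order; in particular $0$ is a first-order pole, hence an eigenvalue of $A$, and its residue $P$ (the spectral projection onto $\ker A$) is positive, as one sees from the ergodic representation $P=\lim_{t\to\infty}\tfrac1t\int_0^t e^{sA}\dx s$ by splitting the integral at the eventual-positivity time. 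Invoking the Perron--Frobenius/Krein--Rutman theory for persistently irreducible eventually positive semigroups from \cite{AroraGlueck2024}, we get that $\ker A=\linSpan\{u\}$ for a quasi-interior point $u$ of $E_+$, that there is a strictly positive $\varphi\in E'$ with $\duality{\varphi}{u}=1$, and that $P=u\otimes\varphi$. This settles the value $0\in\perSpec(A)$.

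Now fix $i\beta\in\perSpec(A)$ with $\beta\ne 0$ and suppose, for a contradiction, that $i\beta\notin\pntSpec(A)$. Being on the boundary of $\spec(A)$, $i\beta$ is an approximate eigenvalue of $A$, so by Proposition~\ref{prop:infinite-dimensional-eigenspace-resolvent} (with $c=2\pi/\beta$) the set $K=\{x^{\calU}\in\Fix S^\beta:(\lambda-i\beta)\Res(\lambda,A)^{\calU}x^{\calU}=x^{\calU}\}$ is infinite-dimensional. Next apply Theorem~\ref{thm:ultrapower-bidual} with the same $c$: with $S=S^\beta$ and $R(\argument)=(\Res(\argument,A)^{\calU})''$, the space $F:=\Fix S''$ is a Banach lattice in an equivalent norm and $\{Q(\lambda):=R(\lambda)\restrict{F}\}_{\re\lambda>0}$ is a \emph{positive} pseudo-resolvent on $F$. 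Under $E^{\calU}\hookrightarrow(E^{\calU})''$ one has $\Fix S\subseteq F$ and $Q(\lambda)\restrict{E^{\calU}}=\Res(\lambda,A)^{\calU}$, so each element of $K$ becomes an eigenvector of $Q$ for the eigenvalue $i\beta$; thus the $i\beta$-eigenspace of $Q$ is infinite-dimensional. On the other hand, $e^{tA}u=u$ shows $u^{\calU}\in\Fix S\subseteq F$ with $\Res(\lambda,A)^{\calU}u^{\calU}=\lambda^{-1}u^{\calU}$, so $0$ is an eigenvalue of $Q$ with eigenvector $u^{\calU}$; and since $0$ is a first-order pole of $\Res(\argument,A)$ with residue $u\otimes\varphi$, the pseudo-resolvent $Q$ has a first-order pole at $0$ whose residue is the rank-one operator $\xi\mapsto\duality{\Phi}{\xi}u^{\calU}$ on $F$, where $\Phi(x^{\calU}):=\lim_{\calU}\duality{\varphi}{x_n}$ defines a positive functional on $E^{\calU}$.

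To finish I would extract a Perron--Frobenius contradiction. For any eigenvector $y^{\calU}\in K$ of $Q$ at $i\beta$, using \eqref{eq:positive-pseudo-resolvent} along $\lambda=\epsilon+i\beta$ and letting $\epsilon\downarrow 0$,
\[
    \modulus{y^{\calU}}=\epsilon\,\modulus{Q(\epsilon+i\beta)y^{\calU}}\le \epsilon\,Q(\epsilon)\,\modulus{y^{\calU}}\longrightarrow \duality{\Phi}{\modulus{y^{\calU}}}\,u^{\calU}\qquad(\epsilon\downarrow 0),
\]
so, by closedness of the cone, the whole $i\beta$-eigenspace of $Q$ sits inside the principal ideal $F_{u^{\calU}}$, which is an $AM$-space with order unit $u^{\calU}$. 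On this ideal $S$ restricts to a positive power-bounded operator fixing $u^{\calU}$, all elements of $K$ are fixed by $S$, and they still satisfy the pseudo-resolvent eigenvalue equation for $i\beta\ne 0$. Arguing as in the positive irreducible case (cf.\ \cite[Section~C-III-3]{Nagel1986} and \cite[Section~2]{RabigerWolff2000}) — the residue of $Q$ at $0$ being a rank-one operator built from the quasi-interior vector $u^{\calU}$ plays the role of irreducibility — one concludes that the $i\beta$-eigenspace of $Q$ is one-dimensional, contradicting the second paragraph. Hence $i\beta\in\pntSpec(A)$, and together with the case $\lambda=0$ every element of $\perSpec(A)$ is an eigenvalue of $A$.

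The step I expect to be the real obstacle is the last one: the rank-one strictly positive structure of the residue of $\Res(\argument,A)$ at $0$ does not transfer verbatim to the ultrapower (the functional $\Phi$ need not be strictly positive on $E^{\calU}$, and $u^{\calU}$ need not be a quasi-interior point of $E^{\calU}$), so the Perron--Frobenius squeeze has to be carried out on the bidual fixed space $F$ — or rather on the principal ideal $F_{u^{\calU}}$, where the $i\beta$-eigenvectors already live — by combining the $S$-invariance of these eigenvectors with the pseudo-resolvent identity there. Making precise that this suffices to pin the eigenspace down to dimension one (equivalently, that $Q\restrict{F}$ inherits enough irreducibility from persistent irreducibility of $(e^{tA})_{t\ge 0}$) is the technical heart; the remaining ingredients are elementary or already available from Section~\ref{sec:ultrapowers} and the quoted literature.
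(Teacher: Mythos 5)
Your proposal follows the paper's strategy quite closely up to the decisive point: reduce to $\spb(A)=0$, use \cite[Corollary~4.4(c)]{AroraGlueck2024} to get the simple pole with rank-one projection $P=u\otimes\varphi$, pass to the ultrapower and bidual via Proposition~\ref{prop:infinite-dimensional-eigenspace-resolvent} and Theorem~\ref{thm:ultrapower-bidual}, and exploit the squeeze $\modulus{y}=\epsilon\modulus{Q(\epsilon+i\beta)y}\le\epsilon Q(\epsilon)\modulus{y}\to P_F\modulus{y}$ coming from the first-order pole at $0$. But the step you yourself flag as the ``technical heart'' is a genuine gap, and the route you sketch for it does not work: from $\modulus{y}\le\duality{\Phi}{\modulus{y}}\,u^{\calU}$ you only get that the $i\beta$-eigenvectors live in the AM-space $F_{u^{\calU}}$, and on an AM-space with unit a positive pseudo-resolvent can perfectly well have an infinite-dimensional peripheral eigenspace unless some irreducibility is present — which, as you correctly observe, does \emph{not} transfer to the ultrapower ($\Phi$ is not strictly positive, $u^{\calU}$ is not quasi-interior). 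So ``arguing as in the positive irreducible case'' is not available, and mere domination of all moduli by a multiple of $u^{\calU}$ cannot bound the dimension.

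The paper closes exactly this gap without any irreducibility on the ultrapower, by a dimension-counting argument that uses only the rank-one-ness of $P$ (inherited from persistent irreducibility on $E$). Introduce the closed ideal $I:=\{x\in F: P_F\modulus{x}=0\}$. The same squeeze gives two things: first, $K\cap I=\{0\}$, so the eigenspace $K$ injects into the quotient lattice $F/I$; second, $P_F\modulus{x}-\modulus{x}$ is positive and annihilated by the projection $P_F$, hence lies in $I$, so in the quotient one has the \emph{equality} $\modulus{\widehat{x}}=\widehat{P_F}\modulus{\widehat{x}}\in\Ima\widehat{P_F}$ — not just a domination. Since all moduli of elements of $K/I$ lie in the (at most one-dimensional) range of the positive projection $\widehat{P_F}$, \cite[Lemma~C-III-3.11]{Nagel1986} yields $\dim K/I\le\dim\Ima\widehat{P_F}\le 1$, contradicting the infinite-dimensionality you (correctly) derived from Proposition~\ref{prop:infinite-dimensional-eigenspace-resolvent} under the assumption $i\beta\notin\pntSpec(A)$. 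In short: the missing idea is to quotient by $I$ so that the squeeze upgrades ``dominated by $u^{\calU}$'' to ``equal to a multiple of the image of $u^{\calU}$'', after which a modulus--dimension comparison lemma, rather than any Perron--Frobenius uniqueness, finishes the proof. Your treatment of the value $0$ and the remaining ultrapower bookkeeping are fine.
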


\begin{proof}
    Without loss of generality, assume that $\spb(A)=0$. Since $(e^{tA})_{t \ge 0}$ is  real, individually eventually positive, and persistently irreducible, we can apply \cite[Corollary~4.4(c)]{AroraGlueck2024} to get that the order of $\spb(A)$ as a pole of the resolvent $\Res(\argument,A)$ must be one and the corresponding spectral projection is given by $P= u\otimes \varphi$ for some quasi-interior point $u$ of $E_+$ and some a strictly positive functional $\varphi\in E'$.

    Let $i\beta\in i\bbR$ be a spectral value of $A$ and $U$ be a free ultrafilter on $\bbN$. On the space $E^{\calU}$, consider the bounded operators
    \[
        S^\beta:=\begin{cases}
                    \left(\left(e^{ nA}\right)_n\right)^{\calU} \qquad & \text{if }\beta=0\\
                    \left(\left(e^{\frac{2\pi n}{\beta}A}\right)_n\right)^{\calU} \qquad & \text{if }\beta\ne 0.
                \end{cases}
    \]
    Uniform eventual positivity and boundedness of the semigroup allow us to infer from Theorem~\ref{thm:ultrapower-bidual} that $S^\beta$ is a positive operator, $F:=\Fix \left(S^{\beta}\right)''$ is a Banach lattice with respect to an equivalent norm,
    and the operator family $\left\{ \left(\Res(\lambda, A)^{\calU}\right)''\restrict{F}\right\}_{\re \lambda>0} $ forms a positive pseudo-resolvent on $F$. 
    
    Let us show that $\left(P^{\calU}\right)''$ maps into $F$.  Since, $0$ is a pole of the resolvent $\Res(\argument, A)$ with corresponding projection $P$, the element $u$ lies in the fixed space of the semigroup. Additionally, it is easy to see that $P^{\calU}=u^{\calU} \otimes \varphi^{\calU}$. This means that, $\left(P^{\calU}\right)''$ must indeed map into $F$.
    From now on, we exclusively work on the Banach lattice $F$ and for simplicity, fix the notation
    \[
        R_F(\argument):=\left(\Res(\argument, A)^{\calU}\right)''\restrict{F}
        \quad  \text{ and } \quad
        P_F := \left(P^{\calU}\right)''\restrict{F} \in \calL(F).
    \]
    Note that, $\dim \Ima P^{\calU}=1$ implies that $\dim \Ima P_F=1$.

    We want to show that $i\beta$ is an eigenvalue of $A$. If not, then we may deduce from Proposition~\ref{prop:infinite-dimensional-eigenspace-resolvent} that for each $\lambda \in \bbC$ with $\re \lambda>0$, the $\lambda$-independent eigenspace
    \[
        K:= \left\{x \in F: (\lambda-i\beta)R_F(\lambda)x=x\right\}
    \]
    is infinite-dimensional; cf. Remark~\ref{rem:eigenvector-fixed-space}(a).
    Next, we consider the closed ideal
    \[
        I:= \left\{  x \in F : P_F\modulus{ x}=0\right\}.
    \]
    For each $x \in K$, we obtain from~\eqref{eq:positive-pseudo-resolvent} the estimate
    \begin{equation}
        \label{eq:ideal-element}
          \modulus{x}=\modulus{r R_F(r+i\beta)x}\le r R_F(r)\modulus{x} \xrightarrow{r\downarrow 0} P_F\modulus{x},
    \end{equation}
    which yields $K\cap I= \{0\}$; the inequality above uses positivity of the pseudo-resolvent $\left\{ R_F(\lambda)\right\}_{\re \lambda>0}$. As a result, the subspace $K/I$ of the quotient space $F/I$ must also be infinite-dimensional. 
    
    On the other hand, for each $x \in K$, the estimate~\eqref{eq:ideal-element} implies that 
    $P_F\modulus{x}-\modulus{x}$ is positive, and hence an element of $I$. In other words, for each $\widehat x \in K/I$, we have $\modulus{\widehat x} \in \Ima \widehat{P_F}$; where $\widehat{P_F}$ denotes the operator induced by $P_F$ on $F/I$. Consequently, \cite[Lemma~C-III-3.11]{Nagel1986} tells us that $\dim K/I \le \dim \Ima \widehat{P_F}$, so $\Ima \widehat{P_F}$ is infinite-dimensional as well, contradicting that $\dim \Ima P_F=1$.
\end{proof}

\begin{remarks}
    (a) Unlike \cite[Lemma~14.16]{BatkaiKramarRhandi2017}, we needed to assume boundedness of the (rescaled) semigroup in Proposition~\ref{prop:peripheral-in-point} in order to employ the ultrapower techniques of Section~\ref{sec:ultrapowers}.

    (b) A careful reader might observe that the proof of Proposition~\ref{prop:peripheral-in-point} used two kinds of eventual positivity:~uniform asymptotic positivity to apply Theorem~\ref{thm:ultrapower-bidual} and individual eventual positivity to employ \cite[Corollary~4.4(c)]{AroraGlueck2024}. 
    This naturally poses the question of whether uniform asymptotic and individual eventual positivity together imply uniform eventual positivity.
    
    Surprisingly, this is not the case. In fact, the first given examples of semigroups that are individually but not uniformly eventually positive \cite[Examples~5.7 and~5.8]{DanersGlueckKennedy2016a} are indeed uniformly asymptotically positive due to \cite[Theorem~7.3.6]{Glueck2016}.
    
    In summary, Proposition~\ref{prop:peripheral-in-point} is valid for semigroups that are merely individually eventually and uniformly asymptotically positive.
\end{remarks}

In case the spectral bound is a pole of the resolvent, the peripheral spectrum of a positive (persistently) irreducible semigroup on a Banach lattice not just consists only of eigenvalues but it is even true that the peripheral spectrum is cyclic (see \cite[Theorem~14.17]{BatkaiKramarRhandi2017} or \cite[Theorem~C-III-3.12]{Nagel1986}). The same remains true in the situation of Proposition~\ref{prop:peripheral-in-point}. In what follows, we write $\pntSpec(A)$ for the point spectrum of $A$ and use the notation
\[
    \perpntSpec(A):= \perSpec(A)\cap \pntSpec(A) = \pntSpec(A) \cap (\spb(A)+i\bbR).
\]

\begin{theorem}
    \label{thm:cyclic-peripheral-point}
    Let $(e^{tA})_{t \ge 0}$ be a real, uniformly eventually positive, and persistently irreducible semigroup on a complex Banach lattice $E$ such that $\spec(A)\ne\varnothing$.

    If $\spb(A)$ is a pole of the resolvent $\Res(\argument,A)$, then there exists $\alpha\ge 0$ such that $\perpntSpec(A)=\spb(A)+i\alpha \bbZ$.
     Moreover, every pole in $\perSpec(A)$ is a simple pole.
\end{theorem}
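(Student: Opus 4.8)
The plan is to reduce to the case $\spb(A)=0$ and to read off the algebraic structure of $\perpntSpec(A)$ from the structure of the pole at $0$ together with a ``signum trick'' carried out inside the principal ideal $E_u$. First I would pass to the rescaled semigroup $\bigl(e^{t(A-\spb(A))}\bigr)_{t\ge0}$; it is again real, uniformly eventually positive, and persistently irreducible, still has non-empty spectrum, and $0=\spb\bigl(A-\spb(A)\bigr)$ is still a pole of the resolvent, while $\perpntSpec$ and the poles of the resolvent merely get shifted by $\spb(A)$. So assume $\spb(A)=0$. The decisive input is then \cite[Corollary~4.4(c)]{AroraGlueck2024}: since the semigroup is real, individually eventually positive, persistently irreducible, and $0$ is a pole of $\Res(\argument,A)$, that corollary gives that $0$ is a \emph{simple} pole whose spectral projection has the form $P=u\otimes\varphi$ for a quasi-interior point $u\in E_+$ and a strictly positive $\varphi\in E'$. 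In particular $e^{tA}u=u$ and $\varphi\circ e^{tA}=\varphi$ for all $t\ge0$, and $\ker A=\linSpan\{u\}$; fix also a threshold $t_0\ge0$ for uniform eventual positivity.

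The core step is to show that $\perpntSpec(A)$ is an additive subgroup of $i\bbR$. Let $i\beta\in\perpntSpec(A)$ and $0\ne h\in\ker(i\beta-A)$. For $t\ge t_0$ one has $\modulus h=\modulus{e^{tA}h}\le e^{tA}\modulus h$ (using that $e^{i\beta t}$ is unimodular and $e^{tA}\ge0$); pairing with $\varphi$ and invoking $\varphi\circ e^{tA}=\varphi$ forces $e^{tA}\modulus h=\modulus h$ for $t\ge t_0$, hence for all $t\ge0$, so $\modulus h\in\ker A$ and $\modulus h=cu$ for some $c>0$. Thus $h\in E_u$, and as $E_u$ is an AM-space with order unit $u$ it is lattice isometric to some $C(K)$ with $u\mapsto\one$; moreover, for $t\ge t_0$ the operator $e^{tA}$ maps $E_u$ into itself (by positivity and $e^{tA}u=u$) and is there a Markov operator. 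Now the classical lemma that the unimodular eigenvectors of a Markov operator on $C(K)$ form a group with multiplicative unimodular eigenvalues — this is also the heart of Glück's cyclicity theorem \cite[Theorem~6.3.2]{Glueck2016} — gives: for $i\beta,i\gamma\in\perpntSpec(A)$ with constant-modulus eigenvectors $h,g\in E_u$, the product $hg\in E_u$ satisfies $e^{tA}(hg)=e^{i(\beta+\gamma)t}hg$ for $t\ge t_0$, hence for all $t\ge0$, so $hg\in\dom A$ and $A(hg)=i(\beta+\gamma)hg$; since the semigroup is real, $\overline h$ likewise gives $A\overline h=-i\beta\,\overline h$. Hence $\perpntSpec(A)$ is indeed a subgroup of $i\bbR$, and (from the same argument) $\ker(i\beta-A)$ is one-dimensional for each $i\beta$ in it.

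For discreteness, note that $0$, being a pole of $\Res(\argument,A)$, is isolated in $\spec(A)$, so $\perpntSpec(A)$ contains no non-zero element in a suitable neighbourhood of $0$; a subgroup of $i\bbR$ with this property is discrete and closed, hence equals $i\alpha\bbZ$ for some $\alpha\ge0$, and undoing the rescaling yields $\perpntSpec(A)=\spb(A)+i\alpha\bbZ$. For the last assertion, let $i\beta\in\perSpec(A)$ (so $\beta\in\bbR$ after rescaling) be a pole of order $m$; then $\ker(i\beta-A)\ne\{0\}$, so $i\beta\in\perpntSpec(A)$. If $m\ge2$ there is $v$ with $h:=(A-i\beta)v$ a non-zero element of $\ker(i\beta-A)$, whence $\modulus h=cu$ with $c>0$ as above; a short computation gives $e^{tA}v=e^{i\beta t}(v+th)$, so for $t\ge t_0$, $\modulus{v+th}=\modulus{e^{tA}v}\le e^{tA}\modulus v$, and pairing with $\varphi$ gives $\duality{\varphi}{\modulus{v+th}}\le\duality{\varphi}{\modulus v}$ for all such $t$ — contradicting $\duality{\varphi}{\modulus{v+th}}\ge t\duality{\varphi}{\modulus h}-\duality{\varphi}{\modulus v}=tc\duality{\varphi}{u}-\duality{\varphi}{\modulus v}\to\infty$. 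Hence $m=1$, i.e.\ every pole in $\perSpec(A)$ is simple.

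The main obstacle I expect is in the second paragraph: one must check that the ``$\modulus h$ is a multiple of $u$'' step genuinely needs no a priori membership of $h$ in $E_u$ (it does not — only $e^{tA}\ge0$ for $t\ge t_0$, $e^{tA}u=u$, and $\varphi\circ e^{tA}=\varphi$ enter), and one must invoke the Markov-operator form of the cyclicity lemma rather than a merely ``rotational'' cyclicity statement, since the weaker statement would not let one conclude that $\perpntSpec(A)$ is a \emph{single} cyclic group. The remaining ingredients — the rescaling, the discreteness deduction, and the simplicity of peripheral poles — are then straightforward once the group structure is in hand.
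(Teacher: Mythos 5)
Your proof is correct, and for the main assertion it follows essentially the same route as the paper: show that the peripheral point spectrum of the rescaled generator is an additive subgroup of $i\bbR$ via a Wielandt-type multiplication argument built on a positive fixed eigenvector and a strictly positive invariant functional, then get discreteness because the pole $\spb(A)$ is isolated in the spectrum. The differences are organisational but worth noting. First, you use the full strength of \cite[Corollary~4.4(c)]{AroraGlueck2024} up front ($P=u\otimes\varphi$ of rank one, hence $\ker A=\linSpan\{u\}$, so $\modulus{h}=cu$) and then run the multiplication argument concretely inside $E_u\cong C(K)$ with Markov operators; the paper instead isolates the group-structure step in Lemma~\ref{lem:cyclic-peripheral-point}, which only needs a strictly positive functional in $\ker A'$, invokes \cite[Proposition~4.1]{AroraGlueck2024} to see that $\modulus{f}$ is quasi-interior, and then applies the abstract signum-operator version of Wielandt's lemma \cite[Lemma~14.14]{BatkaiKramarRhandi2017} -- the same mechanism, since those signum operators are themselves defined through the Kakutani representation of the principal ideal, so your $C(K)$ picture is a faithful concrete rendering (and you correctly restrict the ``Markov eigenvalue group'' lemma to constant-modulus eigenvectors, which is the hypothesis it genuinely needs). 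Second, for the statement that every pole in $\perSpec(A)$ is simple, the paper deduces simplicity at $\spb(A)$ from \cite[Corollary~4.4(c)]{AroraGlueck2024} and then cites \cite[Proposition~6.3.1]{Glueck2016}, whereas you give a short self-contained argument: a rank-two generalized eigenvector $v$ with $(A-i\beta)v=h\ne 0$, the identity $e^{tA}v=e^{i\beta t}(v+th)$, positivity for $t\ge t_0$, and pairing with the invariant $\varphi$ produce the contradiction $tc\duality{\varphi}{u}-\duality{\varphi}{\modulus{v}}\le\duality{\varphi}{\modulus{v}}$ for all large $t$. This buys independence from the external reference at essentially no cost, and all the small verifications it needs (the closed-graph/contraction property of $e^{tA}$ on $E_u$, extending identities from $t\ge t_0$ to all $t\ge 0$ by the semigroup law, and $\modulus{h}=cu$ requiring no a priori membership of $h$ in $E_u$) are handled correctly in your write-up.
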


For the proof of Theorem~\ref{thm:cyclic-peripheral-point}, we prove a generalisation of \cite[Proposition~14.15(c)]{BatkaiKramarRhandi2017} that needs the concept of signum operators on Banach lattice.
Let $K$ be a compact Hausdorff space and $f\in C(K)$ be a function that vanishes nowhere. In particular, $\modulus{f}$ is strictly positive and so $\sgn f:=f \modulus{f}^{-1}$ is well-defined. This gives rise to an operator $S_f : C(K)\to C(K)$ defined by $g\mapsto (\sgn f)g$ that satisfies
\begin{equation}
    \label{eq:signum-operator}
    S_f \bar{f} = \modulus f \qquad\text{and}\qquad \modulus{S_fg}\le \modulus{g}. 
\end{equation}

In fact, for a general Banach lattice $E$, if $f\in E$ such that $\modulus{f} $ is a quasi-interior point of $E_+$, then by Kakutani representation theorem, one can uniquely obtain the so-called \emph{signum operator} $S_f : E\to E$ with respect to $f$ that is invertible and satisfies the properties in~\eqref{eq:signum-operator}. We refer to \cite[Page~227]{BatkaiKramarRhandi2017} for the construction.

\begin{lemma}
    \label{lem:cyclic-peripheral-point}
    Let $E$ be a complex Banach lattice and let $(e^{tA})_{t \ge 0}$ be a  real, uniformly eventually positive, and persistently irreducible semigroup on $E$.

    If $\spb(A)=0$ and $\ker A'$ contains a strictly positive functional, then $\pntSpec(A) \cap i\bbR$ is either empty or an additive subgroup of $i\mathbb R$.
\end{lemma}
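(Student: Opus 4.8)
The plan is to adapt the classical signum-operator proof of cyclicity of the peripheral point spectrum (in the spirit of \cite[Proposition~14.15]{BatkaiKramarRhandi2017}) to the eventually positive setting, with \emph{eventual} invariance and persistent irreducibility playing the roles of invariance and irreducibility. If $\pntSpec(A)\cap i\bbR$ is empty there is nothing to prove, so assume $i\alpha\in\pntSpec(A)\cap i\bbR$ and fix an eigenvector $f\ne 0$. It suffices to show that $G:=\pntSpec(A)\cap i\bbR$ is closed under $(i\alpha,i\beta)\mapsto i(\beta-\alpha)$, since a non-empty subset of $i\bbR$ with that property automatically contains $0$, is symmetric, and is closed under addition, hence is a subgroup. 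So I would take a second element $i\beta\in G$ with eigenvector $g\ne 0$ and build an eigenvector of $A$ for the value $i(\beta-\alpha)$.

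First I would prove that $\modulus f$ is a quasi-interior fixed vector of $(e^{tA})_{t\ge 0}$. Writing $\varphi\in\ker A'$ for the given strictly positive functional, a short computation using $A'\varphi=0$ shows $\duality{\varphi}{e^{tA}x}=\duality{\varphi}{x}$ for all $x\in E$ and $t\ge 0$. Fixing $t_0$ with $e^{tA}\ge 0$ for $t\ge t_0$ (uniform eventual positivity), we obtain for $t\ge t_0$ that $e^{tA}\modulus f\ge\modulus{e^{tA}f}=\modulus f$ while $\duality{\varphi}{e^{tA}\modulus f-\modulus f}=0$; strict positivity of $\varphi$ forces $e^{tA}\modulus f=\modulus f$ for $t\ge t_0$, and the semigroup law promotes this to all $t\ge 0$. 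In particular $0\in G$. Moreover, for $t\ge t_0$ the principal ideal $E_{\modulus f}$ is $e^{tA}$-invariant (if $\modulus h\le c\modulus f$ then $\modulus{e^{tA}h}\le c\,e^{tA}\modulus f=c\modulus f$), hence so is the closed ideal $\overline{E_{\modulus f}}$, which is non-zero; persistent irreducibility then gives $\overline{E_{\modulus f}}=E$, i.e.\ $\modulus f$ is quasi-interior.

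With $\modulus f$ quasi-interior, the signum operator $S:=S_f\in\calL(E)$ is well defined, invertible and satisfies~\eqref{eq:signum-operator}. Identifying the AM-space $E_{\modulus f}$ with some $C(K)$ via the Kakutani theorem, $\modulus f$ corresponds to $\one$ and $f$ to the unimodular function $\sgn f$; for $t\ge t_0$, $e^{tA}$ restricts to a positive operator on $E_{\modulus f}\cong C(K)$ with $e^{tA}\one=\one$ and $e^{tA}\sgn f=e^{i\alpha t}\sgn f$. Using the elementary fact that a positive operator $T$ on $C(K)$ with $T\one=\one$ satisfies $T(uw)=(Tu)(Tw)$ for all $w$ whenever $u$ and $Tu$ have modulus $\one$ (inspect the representing probability measures of $T$ together with the equality case of the triangle inequality), this gives $e^{tA}S=e^{i\alpha t}Se^{tA}$ on $E_{\modulus f}$ for $t\ge t_0$; since $E_{\modulus f}$ is dense in $E$ and both sides are bounded on $E$, the identity holds on all of $E$, i.e.\ $S^{-1}e^{tA}=e^{i\alpha t}e^{tA}S^{-1}$ for $t\ge t_0$. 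Applying this to $g$ yields $e^{i\beta t}S^{-1}g=e^{i\alpha t}e^{tA}(S^{-1}g)$, so the non-zero vector $h:=S^{-1}g$ satisfies $e^{tA}h=e^{i(\beta-\alpha)t}h$ for all $t\ge t_0$. As the orbit $t\mapsto e^{tA}h$ then coincides for $t\ge t_0$ with the differentiable map $t\mapsto e^{i(\beta-\alpha)t}h$, its right derivative exists, which forces $h\in\dom A$ with $Ah=i(\beta-\alpha)h$. Hence $i(\beta-\alpha)\in G$.

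The hard part, relative to the genuinely positive case (where $E_{\modulus f}$ is invariant for all $t$), is that here $E_{\modulus f}$ is only eventually invariant: one has to extract quasi-interiority of $\modulus f$ from eventual invariance of $\overline{E_{\modulus f}}$ via persistent irreducibility, and one has to check that the conjugation identity --- available a priori only for $t\ge t_0$ --- still yields an honest eigenvector of $A$, via the right-derivative characterisation of the domain for $C_0$-semigroups. Everything else is a faithful transcription of the classical argument, with $\spb(A)=0$ and the strictly positive functional in $\ker A'$ entering only to identify $\modulus f$ as a fixed vector.
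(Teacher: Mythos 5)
Your argument is correct and follows essentially the same route as the paper: identify $\modulus{f}$ as a fixed vector of the semigroup via the strictly positive functional in $\ker A'$, obtain quasi-interiority from persistent irreducibility, and then conjugate by signum operators for $t\ge t_0$, extending the resulting eigenvector relation to all $t\ge 0$ by the semigroup law. The only differences are cosmetic: you unpack the two results the paper cites as black boxes (\cite[Proposition~4.1]{AroraGlueck2024} for quasi-interiority of positive eigenvectors and the generalised Wielandt lemma \cite[Lemma~14.14]{BatkaiKramarRhandi2017} for the conjugation identity), and you close the set under $(i\alpha,i\beta)\mapsto i(\beta-\alpha)$ using a single signum operator, whereas the paper shows closure under addition using $S_gS_f$.
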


\begin{proof}
    By assumption, there exists a strictly positive functional $\varphi \in \ker A'$ and $t_0\ge 0$ such that $e^{tA}\ge 0$ for all $t\ge t_0$. Let $\alpha \in \mathbb R$ and $0\ne f\in E$ be such that $Af=i\alpha f$. Then from the spectral inclusion theorem \cite[Corollary~9.32]{BatkaiKramarRhandi2017}, we get $e^{tA}f=e^{i\alpha t}f$ for all $t\ge 0$. In particular,
    \[
        \modulus{f}=\modulus{e^{i\alpha t}f} = \modulus{e^{tA}f} \le e^{tA}\modulus{f}
    \]
    for all $t\ge t_0$. Hence, for each $t\ge t_0$, the vector $e^{tA}\modulus{f}-\modulus{f}$ is a positive element in the kernel of $\varphi$; here we have used that $e^{tA'}\varphi=\varphi$ for all $t\ge 0$.
    It follows from the strict positivity of $\varphi$ that $e^{tA}\modulus{f}=\modulus{f}$ for all $t\ge t_0$. As a result,
    \begin{equation}
        \label{eq:modulus-fixed-point}
        e^{tA}\modulus{f} = e^{tA} e^{t_0 A} \modulus{f} = e^{(t+t_0)A} \modulus{f} = \modulus{f}
    \end{equation}
    for all $t\ge 0$ and so $\modulus{f}\in \ker A$. 
    Now, for every eventually positive, persistently irreducible semigroup on Banach lattice, we know from \cite[Proposition~4.1]{AroraGlueck2024} that every positive non-zero eigenvector of the generator is a quasi-interior point. 
    Thus, $\modulus f$ is a quasi-interior point of $E_+$ and so the signum operator $S_f$ is well-defined. So by a generalisation of Wielandt's lemma \cite[Lemma~14.14]{BatkaiKramarRhandi2017}, we obtain
    \[
        e^{tA} = S_f^{-1} e^{t(A-i\alpha)} S_f \qquad (t\ge t_0).
    \]

    Finally, let $i\alpha, i\beta \in \pntSpec(A)$. Then there exists $0\ne f,g\in E$ such that $Af=i\alpha f$ and $Ag=i\beta g$. From what we have shown above,
    \[
        e^{t(A-i(\alpha+\beta))} = S_g e^{t(A-i\alpha)} S_g^{-1} = S_g S_f e^{tA} S_f^{-1} S_g^{-1}
    \]
    for all $t\ge t_0$. In particular, using~\eqref{eq:modulus-fixed-point}, we obtain
    \[
        e^{t(A-i(\alpha+\beta))} S_g S_f \modulus{f} =S_g S_f e^{tA} \modulus{f}= S_g S_f \modulus{f}
    \]
    for all $t\ge t_0$. By invertibility of the signum operator, it follows that $0\ne S_g S_f \modulus{f} \in \ker (A-i(\alpha+\beta))$ and so $i(\alpha+\beta)\in \pntSpec(A)$.
\end{proof}

We point out that uniform rather than individual eventual positivity of the semigroup was only needed in the proof above to employ \cite[Lemma~14.14]{BatkaiKramarRhandi2017}. The preceding arguments also work for the individual case, cf. \cite[Lemma~4.2]{DanersGlueck2017}.

\begin{proof}[Proof of Theorem~\ref{thm:cyclic-peripheral-point}]
    Replacing $A$ with $A-\spb(A)$, we may assume that $\spb(A)=0$.
    Since $0$ is a pole of the resolvent and the semigroup is individually eventually positive and persistently irreducible, so $\ker A'$ contains a strictly positive functional by \cite[Corollary~4.4(c)]{AroraGlueck2024}.  
    
    Lemma~\ref{lem:cyclic-peripheral-point} thus implies that $\pntSpec(A) \cap i\bbR$ is an additive subgroup of $i\mathbb R$ (it is non-empty because it contains $0$). For this reason, it is either of the form $i\alpha \bbZ$ or is dense. The latter is not possible because
    $\pntSpec(A) \cap i\bbR$ is closed and contains an isolated point. Consequently, there must exist $\alpha\ge 0$ such that $\pntSpec(A) \cap i\bbR=i\alpha \bbZ$. 

    Finally, individual eventual positivity and persistent irreducibility of the semigroup implies by \cite[Corollary~4.4(c)]{AroraGlueck2024} that because $\spb(A)\ne-\infty$ is a pole of the resolvent $\Res(\argument,A)$, it must be a simple pole. In turn, every pole in $\perSpec(A)$ must be a simple pole by \cite[Proposition~6.3.1]{Glueck2016}.
\end{proof}

\begin{remark}
    Since in the proof of Lemma~\ref{lem:cyclic-peripheral-point}, we obtain the equality $e^{tA} = S_f^{-1} e^{t(A-i\alpha)} S_f$ only for large $t$, we are -- unlike \cite[Theorem~C-III-3.12]{Nagel1986} -- unable to conclude in Theorem~\ref{thm:cyclic-peripheral-point} that every element of $\perSpec(A)$ is a pole of the resolvent.
\end{remark}

An immediate consequence of Proposition~\ref{prop:peripheral-in-point} and Theorem~\ref{thm:cyclic-peripheral-point} is the following:

\begin{corollary}
    Let $(e^{tA})_{t \ge 0}$ be a real, uniformly eventually positive, and persistently irreducible semigroup on a complex Banach lattice $E$ such that $\spec(A)\ne\varnothing$.

    If $\spb(A)$ is a pole of the resolvent $\Res(\argument,A)$ and the rescaled semigroup $\left(e^{t(A-\spb(A))}\right)_{t \ge 0}$ is bounded, then there exists $\alpha\ge 0$ such that $\perSpec(A)=\spb(A)+i\alpha \bbZ$.
\end{corollary}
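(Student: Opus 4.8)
The plan is simply to stack the two preceding results. First I would rescale: replacing $A$ by $A-\spb(A)$ preserves realness, uniform eventual positivity, persistent irreducibility, and the property of the spectral bound being a pole of the resolvent, while it turns the rescaled semigroup into a genuinely bounded $C_0$-semigroup; so without loss of generality $\spb(A)=0$ and the goal is to produce $\alpha\ge 0$ with $\perSpec(A)=i\alpha\bbZ$.

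Next, the semigroup $(e^{tA})_{t\ge 0}$ is now bounded, real, uniformly eventually positive, persistently irreducible, and $0=\spb(A)$ is a pole of $\Res(\argument,A)$, so Proposition~\ref{prop:peripheral-in-point} applies and gives that every element of $\perSpec(A)$ is an eigenvalue of $A$. Since trivially $\perSpec(A)\subseteq \spb(A)+i\bbR=i\bbR$, this says precisely that $\perSpec(A)\subseteq \pntSpec(A)\cap i\bbR$, i.e.\ $\perSpec(A)=\perpntSpec(A)$.

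Finally, $\spec(A)\ne\varnothing$ (indeed $0$ is a pole of the resolvent, hence a spectral value), so Theorem~\ref{thm:cyclic-peripheral-point} applies to the very same semigroup and yields $\alpha\ge 0$ with $\perpntSpec(A)=i\alpha\bbZ$. Chaining the two identities gives $\perSpec(A)=\perpntSpec(A)=i\alpha\bbZ$, and undoing the rescaling this reads $\perSpec(A)=\spb(A)+i\alpha\bbZ$, as claimed.

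I do not expect any real obstacle, as the substance is already contained in Proposition~\ref{prop:peripheral-in-point} and Theorem~\ref{thm:cyclic-peripheral-point}. The one point worth noting is the role of the hypotheses: boundedness of the rescaled semigroup is needed only to invoke Proposition~\ref{prop:peripheral-in-point} (its proof runs through the ultrapower constructions of Section~\ref{sec:ultrapowers}, which require boundedness), whereas Theorem~\ref{thm:cyclic-peripheral-point} does not use it; hence this argument cannot establish $\perSpec(A)=\perpntSpec(A)$ without the boundedness assumption.
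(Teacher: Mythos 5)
Your argument is correct and is exactly the paper's intended route: the corollary is stated there as an immediate consequence of Proposition~\ref{prop:peripheral-in-point} (which gives $\perSpec(A)=\perpntSpec(A)$ under the boundedness hypothesis) and Theorem~\ref{thm:cyclic-peripheral-point} (which gives $\perpntSpec(A)=\spb(A)+i\alpha\bbZ$). Your remark about which hypothesis is used where also matches the paper's own division of labour between the two results.
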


\subsection*{Peripheral spectrum of asymptotically positive semigroups}

If $(e^{tA})_{t \ge 0}$ is a positive and bounded $C_0$-semigroup on a Banach lattice, then $\spb(A)=0$ implies that the peripheral spectrum of $A$ is cyclic \cite[Theorem~C-III-2.10 and Proposition~C-III-2.9(a)]{Nagel1986}. We generalise this result for uniform asymptotic positivity:

\begin{theorem}
    \label{thm:cyclicity-peripheral}
    If $(e^{tA})_{t \ge 0}$ is a uniformly asymptotically positive $C_0$-semigroup on a complex Banach lattice $E$, then
    the peripheral spectrum of $A$ is cyclic, i.e., if $\spb(A)+i\beta \in \spec(A)$, then $\spb(A)+in\beta \in \spec(A)$ for all $n\in \bbZ$.
\end{theorem}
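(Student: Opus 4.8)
The plan is to transfer the question, by means of the ultrapower--bidual construction of Section~\ref{sec:ultrapowers}, to a \emph{positive} pseudo-resolvent on a Banach lattice -- where the classical cyclicity theory is available -- and then to read the conclusion back off a norm blow-up of $\Res(\argument,A)$. Since the semigroup is uniformly asymptotically positive, $\spb(A)\in\bbR$; after rescaling we may assume $\spb(A)=0$, so that the semigroup is bounded. Fix a peripheral spectral value $i\beta\in\spec(A)\cap i\bbR$; if $\beta=0$ there is nothing to prove, so assume $\beta\ne 0$ and aim to show $in\beta\in\spec(A)$ for every $n\in\bbZ$.

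As a boundary point of $\spec(A)$, the value $i\beta$ is an approximate eigenvalue of $A$ \cite[Proposition~IV.1.10]{EngelNagel2000}, so Proposition~\ref{prop:eigenvector-fixed-space} (cf.\ Remark~\ref{rem:eigenvector-fixed-space}(a)) yields a free ultrafilter $\calU$ on $\bbN$ and a normalized eigenvector $x^{\calU}\in\Fix S^\beta$ of the pseudo-resolvent $\bigl\{\Res(\lambda,A)^{\calU}\bigr\}_{\re\lambda>0}$ for the eigenvalue $i\beta$, where $S^\beta=\bigl(\bigl(e^{\frac{2\pi n}{\beta}A}\bigr)_n\bigr)^{\calU}$. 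Applying Theorem~\ref{thm:ultrapower-bidual} with $c=2\pi/\beta$ -- so that the operator called $S$ there coincides with $S^\beta$ -- the fixed space $F:=\Fix(S^\beta)''$ is a Banach lattice under an equivalent norm, and $R_F(\lambda):=\bigl(\Res(\lambda,A)^{\calU}\bigr)''\restrict{F}$ defines a positive pseudo-resolvent on $F$ for $\re\lambda>0$. Because the canonical embedding $E^{\calU}\hookrightarrow(E^{\calU})''$ intertwines $S^\beta$ with $(S^\beta)''$ and $\Res(\lambda,A)^{\calU}$ with its bidual, the image of $x^{\calU}$ lies in $F$, is non-zero, and still satisfies $(\lambda-i\beta)R_F(\lambda)x^{\calU}=x^{\calU}$ for all $\re\lambda>0$.

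This reduces the task to a statement about the positive pseudo-resolvent $\{R_F(\lambda)\}_{\re\lambda>0}$ on the Banach lattice $F$: it has spectral bound $0$ and an eigenvalue at $i\beta$, and one wants every $in\beta$ ($n\in\bbZ$) to be an eigenvalue of it as well. At this point I would invoke the cyclicity of the boundary spectrum of a positive pseudo-resolvent -- the counterpart, for pseudo-resolvents, of the cyclicity of the peripheral spectrum of positive semigroups \cite[Proposition~C-III-2.9 and Theorem~C-III-2.10]{Nagel1986}; cf.\ \cite[Section~2]{RabigerWolff2000} -- to conclude that $in\beta$ is an eigenvalue of $\{R_F(\lambda)\}_{\re\lambda>0}$ for every $n\in\bbZ$; the case $n=0$ in particular reconfirms $\spb(A)=0\in\spec(A)$. (The basic facts feeding such an argument are readily available: by~\eqref{eq:positive-pseudo-resolvent} applied with $\lambda=r+i\beta$ one gets $\modulus{x^{\calU}}\le rR_F(r)\modulus{x^{\calU}}$ for all $r>0$, and the net $r\mapsto rR_F(r)\modulus{x^{\calU}}$ is order-increasing as $r\downarrow0$ and norm-bounded by $\sup_{t\ge0}\norm{e^{tA}}\cdot\norm{\modulus{x^{\calU}}}$.)

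Finally, to pass back to the generator: given $n\in\bbZ$, choose $0\ne z\in F$ with $(\lambda-in\beta)R_F(\lambda)z=z$ for $\re\lambda>0$. Since $R_F(\lambda)z=\bigl(\Res(\lambda,A)^{\calU}\bigr)''z$ and ultrapower and bidual extensions are norm-preserving, this forces $\norm{\Res(\lambda,A)}\ge\modulus{\lambda-in\beta}^{-1}$, which tends to $\infty$ as $\lambda\to in\beta$ with $\re\lambda>0$; since the resolvent is locally bounded on the resolvent set, $in\beta\in\spec(A)$, as required. I expect the pseudo-resolvent cyclicity step to be the genuine obstacle: in the absence of (persistent) irreducibility the modulus $\modulus{x^{\calU}}$ need not be a fixed vector of $R_F$, so the direct signum-operator/Wielandt argument of Lemma~\ref{lem:cyclic-peripheral-point} is unavailable and one must lean on the full positive pseudo-resolvent machinery. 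The passage from $E^{\calU}$ to the bidual $(E^{\calU})''$ -- without which $F$ would carry no Banach-lattice structure to run that machinery on -- is precisely what Theorem~\ref{thm:ultrapower-bidual} provides.
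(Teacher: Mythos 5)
Your reduction is exactly the paper's: rescale to $\spb(A)=0$, use Proposition~\ref{prop:eigenvector-fixed-space} to produce an eigenvector $x$ of the pseudo-resolvent lying in $\Fix S^\beta$, pass to the bidual via Theorem~\ref{thm:ultrapower-bidual} so that $F=\Fix (S^\beta)''$ is a Banach lattice carrying a positive pseudo-resolvent $\{R_F(\lambda)\}_{\re\lambda>0}$, and at the end convert an eigenvector for $in\beta$ into the estimate $\norm{\Res(r+in\beta,A)}\ge 1/r$. The gap is precisely the step you ``invoke'': a cyclicity theorem for the boundary spectrum of a positive pseudo-resolvent is not available off the shelf in \cite{Nagel1986} or \cite{RabigerWolff2000} in the form you need. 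The pseudo-resolvent version of Wielandt's lemma, \cite[Proposition~C-III-2.7]{Nagel1986}, requires that $\modulus{x}$ be a genuine fixed vector, $\lambda R_F(\lambda)\modulus{x}=\modulus{x}$, whereas positivity only yields the inequality $\modulus{x}\le rR_F(r)\modulus{x}$ -- the very obstacle you flag. Your parenthetical ingredients do not close it: the net $rR_F(r)\modulus{x}$ is indeed increasing and norm-bounded as $r\downarrow 0$, but in a general Banach lattice an increasing norm-bounded net need not converge, so no fixed vector can be manufactured by passing to the limit (contrast Proposition~\ref{prop:peripheral-in-point}, where a pole at $0$ supplies the limit $P_F\modulus{x}$).

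What fills this gap is the actual content of the paper's proof, modelled on the proof of \cite[Theorem~C-III-2.10]{Nagel1986}: choose $\varphi\in F'$ with $\duality{\varphi}{x}\ne 0$, take a weak${}^*$-cluster point $\psi$ of $r R_F(r)'\modulus{\varphi}$ as $r\downarrow 0$ (Banach--Alaoglu), verify $\lambda R_F(\lambda)'\psi=\psi$ and $\duality{\psi}{\modulus{x}}>0$, and quotient $F$ by the closed ideal $I=\{y\in F:\duality{\psi}{\modulus{y}}=0\}$. On $F/I$ the induced pseudo-resolvent is still positive, $\widehat{x}\ne 0$, and now $r\widehat{R}(r)\modulus{\widehat{x}}=\modulus{\widehat{x}}$ holds exactly, so \cite[Proposition~C-III-2.7]{Nagel1986} applies on the principal ideal generated by $\modulus{\widehat{x}}$ and yields eigenvectors $\widehat{x}^{[n]}$ for all $in\beta$; the norm estimate then passes through the quotient. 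So your architecture is right and you have correctly located the crux, but the pseudo-resolvent cyclicity step must be proved rather than cited, and the quotient-ideal construction is the missing idea.
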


\begin{proof}
    Without loss of generality, we assume that $\spb(A)=0$.
    Let $\beta\ne 0$ be such that $i\beta$ is a spectral value of $A$ and
    let $\calU$ be a free ultrafilter on $\bbN$. 
    We work with bi-adjoints for which we fix the notations:
    \[
        G:=\left(E^{\calU}\right)'', \quad S:=\left(\left(\left(e^{\frac{2\pi n}{\beta}A}\right)_n\right)^{\calU}\right)'', \quad \text{ and } \quad R(\argument)=\left(R(\argument, A)^{\calU}\right)''.
    \]
    Theorem~\ref{thm:ultrapower-bidual} tells us that $S$ is positive, $F:=\Fix  S$ is a Banach lattice with respect to an equivalent norm, and $\left\{ R(\lambda)\restrict{F}\right\}_{\re \lambda>0} $ is a positive pseudo-resolvent on $F$.
    On the other hand, Proposition~\ref{prop:eigenvector-fixed-space} yields a non-zero vector $ x \in F$ such that
    \begin{equation}
        \label{eq:resolvent-eigenvector}
        \lambda R(\lambda+i\beta)  x =  x \quad \text{ whenever }\quad\re \lambda>0
    \end{equation}
    (the semigroup is bounded due to uniform asymptotic positivity and $\spb(A)=0$).
    Now, we divide the proof into three steps and argue as in the proof of \cite[Theorem~C-III-2.10]{Nagel1986}.
    
    \emph{Step 1: We find a closed ideal of $F$ such that the image of $x$ under the corresponding quotient map is non-zero}. First pick $\varphi\in F'$ such that $\duality{\varphi}{ x}\ne 0$. Boundedness of the semigroup guarantees norm-boundedness of the operator family $\{r R(r)' \modulus{\varphi} : r>0 \}$ in $F'$. So, by the Banach-Alaoglu theorem, it has a weak${}^*$-convergent subnet  $\big(r_j R(r_j)' \modulus{\varphi} \big)$ with limit, say $\psi \in F'$, as $r_j\to 0$. 
    Observe that, positivity of the pseudo-resolvent,~\eqref{eq:positive-pseudo-resolvent}, and~\eqref{eq:resolvent-eigenvector} together imply 
    \begin{equation}
        \label{eq:resolvent-positive-eigenvector}
        \modulus{x} = \modulus{r R(r+i\beta) x} \le r R(r)\modulus{x}\qquad (r>0).
    \end{equation}
    Consequently,
    \[
        \modulus{\duality{\varphi}{ x}}  \le \duality{\modulus{\varphi}}{r R(r)\modulus{x}  } = \duality{r R(r)'\modulus{\varphi} }{\modulus x}\qquad (r>0).
    \]
    From the weak${}^*$-convergence we now infer that 
    $
        \duality{\psi}{\modulus x} \ge \modulus{\duality{\varphi}{ x}} > 0
    $
    which means that the ideal
    \[
        I := \{ y \in F: \duality{\psi}{\modulus y}=0\}
    \]
    does not contain $x$. Denoting the quotient space $F/I$ by $\widehat F$, we have shown that $\widehat x := x+I$ is a non-zero vector in $\widehat F$.
    
    \emph{Step 2: We show that $\left\{ R(\lambda)\restrict{F}\right\}_{\re \lambda>0} $ induces a positive pseudo-resolvent on $\widehat F$}.
    For fixed $\lambda \in \bbC$ with $\re\lambda>0$, the resolvent identity gives
    \begin{align*}
        \big(1-\lambda R(\lambda)'\big)r_j R(r_j)'\modulus{\varphi}
            & = r_j R(r_j)'\modulus{\varphi} - r_j \lambda ( r_j-\lambda)^{-1} \big(R(\lambda)'-R(r_j)'\big)\modulus{\varphi}\\
            & = r_j  ( r_j-\lambda)^{-1} \big(r_j R(r_j)'\modulus{\varphi}-\lambda R(\lambda)'\modulus{\varphi}\big).
    \end{align*}
    Norm-boundedness of $\{r R(r)' \modulus{\varphi} : r>0 \}$ and weak${}^*$-convergence of $\big(r_j R(r_j)' \modulus{\varphi} \big)$ therefore imply that
    \begin{equation}
        \label{eq:resolvent-zero-eigenvalue}
        \lambda R(\lambda)'\psi = \psi.
    \end{equation} 
    In particular, for each $y\in F$, 
    \[
        \duality{\psi}{R(\lambda) \modulus{y} } = \duality{R(\lambda)'\psi }{\modulus y} = \lambda^{-1} \duality{\psi}{\modulus y}.
    \]
    Whence, we have shown that $I$ is invariant under $\left\{ R(\lambda)\restrict{F}\right\}_{\re \lambda>0} $ and so the latter induces a positive pseudo-resolvent $\left\{\widehat R(\lambda)\right\}_{\re \lambda>0} $ on $\widehat F$.

    \emph{Step 3: We show the required assertion that $in\beta \in \spec(A)$ for all $n\in \bbZ$}.
    Fix $\lambda \in \bbC$ with $\re\lambda>0$. We claim that $\lambda \widehat R(\lambda)\modulus{ \widehat x}=\modulus{\widehat  x}$. By~\eqref{eq:pseudo-resolvent-eigenvalue-sufficient}, it suffices to show the equality for the particular case $\lambda=r>0$. 
    In this case,~\eqref{eq:resolvent-zero-eigenvalue} yields
    \[
        \duality{\psi}{r R(r)\modulus x-\modulus x  }= \duality{rR(r)'\psi-\psi}{\modulus x} =0
    \]
    and so -- keeping~\eqref{eq:resolvent-positive-eigenvector} in mind -- we get $rR(r)\modulus x-\modulus x \in I$. This means that $r\widehat R(r)\modulus{\widehat  x} =\modulus{\widehat x}$, as desired. Furthermore~\eqref{eq:resolvent-eigenvector} implies that $\lambda \widehat R(\lambda+i\beta)\widehat x=\widehat x$. Next, let $S_{\widehat x}$ and $S_{\overline{\widehat x}}$ be the signum operator on the principal ideal ${\widehat F}_{\modulus{\widehat x}}$ (equipped with gauge norm) and for $n\in \bbZ$, recursively set
    \[
       \widehat x^{[n]} :=  \begin{cases}
                                \modulus{\widehat x},\qquad & n=0\\
                                S_{\widehat x} \widehat x^{[n-1]}, \qquad & n>0\\
                                S_{\overline{\widehat x}} \widehat x^{[n+1]}, \qquad & n<0;
                            \end{cases} 
    \]
    see \cite[C-III-Definition~2.1]{Nagel1986}.
    Because $\left\{\widehat R(\lambda)\right\}_{\re \lambda>0} $ is a positive pseudo-resolvent, $\lambda \widehat R(\lambda)\modulus{ \widehat x}=\modulus{\widehat  x}$, and $\lambda \widehat R(\lambda+i\beta)\widehat x=\widehat x$ for all $\lambda \in \bbC$ with $\re\lambda>0$, we must, according to \cite[Proposition~C-III-2.7]{Nagel1986}, also have
    \[
        \lambda \widehat R(\lambda+in\beta)\widehat x^{[n]}=\widehat x^{[n]}, \qquad n\in\bbZ.
    \]
    So, for each $r>0$ and $n\in \bbZ$, we have
    \[
        \norm{\Res(r+ in\beta,A)}=\norm{\Res(r+ in\beta,A)^{\calU}} = \norm{R(r+in\beta)} = \norm{\widehat R(r+in\beta)} \ge \frac1r.
    \]
    Consequently, $in\beta\in \spec(A)$ for all $n\in \bbZ$.
\end{proof}

Theorem~\ref{thm:cyclicity-peripheral} allows us to know the exact description of the peripheral spectrum if we additionally know that the peripheral spectrum is bounded. Note that the spectral bound of a closed operator $A$ is said to be a \emph{dominant spectral value} if
$
    \perSpec(A)=\{\spb(A)\}.
$
Further, we recall that a $C_0$-semigroup on a Banach space is called \emph{norm continuous at infinity} if either $\gbd(A)=-\infty$ or
\[
    \lim_{t\to\infty} \limsup_{s\to 0} \norm{ e^{t (A-\gbd(A)) } - e^{(t+s)(A- \gbd(A))}   }=0.
\]
Semigroups that are norm continuous at infinity were studied first in \cite{MartinezMazon1996}. Their significance is theoretical because even though they strictly contain the class of eventually norm continuous semigroups, they retain various spectral and asymptotic properties (see \cite{MartinezMazon1996} and \cite[Lemma~8.2.4]{Arora2023}).

\begin{corollary}
    \label{cor:dominant-spectral-value}
    On a complex Banach lattice $E$, if $(e^{tA})_{t \ge 0}$ be a real $C_0$-semigroup that is norm continuous at infinity and uniformly asymptotically positive,
    then $\spb(A)$ is a dominant spectral value of $A$.
\end{corollary}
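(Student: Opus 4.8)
The plan is to combine the cyclicity of the peripheral spectrum supplied by Theorem~\ref{thm:cyclicity-peripheral} with a boundedness property of the peripheral spectrum that comes from norm continuity at infinity. Without loss of generality, assume $\spb(A)=0$. Since the semigroup is uniformly asymptotically positive, it is bounded, and since it is norm continuous at infinity, the general spectral theory for such semigroups applies. The key structural fact I would invoke is that, for a semigroup that is norm continuous at infinity, the part of the spectrum of $A$ on the line $\spb(A)+i\bbR$ is bounded — indeed, $\perSpec(A)$ can have no accumulation point at $\pm i\infty$. This is exactly one of the spectral properties such semigroups share with eventually norm continuous semigroups; it is recorded in \cite[Lemma~8.2.4]{Arora2023} (or can be read off from \cite{MartinezMazon1996}), where one shows that for every $a>\spb(A)$ the set $\spec(A)\cap\{\re\lambda\ge a-\varepsilon\}$ ... — more precisely, that $\{\lambda\in\spec(A):\re\lambda=\spb(A)\}$ is compact.

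With this in hand, suppose for contradiction that $\spb(A)=0$ is \emph{not} dominant, i.e.\ there exists $\beta\neq 0$ with $i\beta\in\spec(A)$. By Theorem~\ref{thm:cyclicity-peripheral}, the peripheral spectrum is cyclic, so $in\beta\in\spec(A)$ for every $n\in\bbZ$. But then $\perSpec(A)$ contains the unbounded set $\{in\beta:n\in\bbZ\}$, contradicting the boundedness of $\perSpec(A)$ established in the previous paragraph. Hence no such $\beta$ exists, which means $\perSpec(A)=\{\spb(A)\}$, i.e.\ $\spb(A)$ is a dominant spectral value of $A$.

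I would also need the harmless remark that $\spec(A)\neq\varnothing$, or rather that $\perSpec(A)\neq\varnothing$ is \emph{not} required: if the peripheral spectrum is empty then trivially $\perSpec(A)\subseteq\{\spb(A)\}$, and in fact under norm continuity at infinity with $\spb(A)$ finite one has $\spb(A)\in\spec(A)$ (boundary spectral values exist on the line $\re\lambda=\spb(A)$ when $\spb(A)>-\infty$), so the conclusion $\perSpec(A)=\{\spb(A)\}$ holds in all cases; when $\gbd(A)=-\infty$ the statement is vacuous since $\spb(A)=-\infty$. The main obstacle is purely bibliographic rather than mathematical: one must quote the correct form of the ``peripheral spectrum is bounded'' statement for semigroups norm continuous at infinity — this is the only nontrivial input, and it is precisely what distinguishes Corollary~\ref{cor:dominant-spectral-value} from Theorem~\ref{thm:cyclicity-peripheral}. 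Once that is cited, the argument is the short cyclicity-versus-boundedness contradiction above.
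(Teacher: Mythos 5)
Your argument is correct and coincides with the paper's proof: both combine the cyclicity of the peripheral spectrum from Theorem~\ref{thm:cyclicity-peripheral} with the boundedness of the peripheral spectrum for semigroups norm continuous at infinity (via \cite[Theorem~1.9]{MartinezMazon1996}) to conclude dominance. The side remarks about empty peripheral spectrum and $\spb(A)=-\infty$ are unnecessary (uniform asymptotic positivity already presupposes $\spb(A)>-\infty$), but they do not affect the argument.
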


\begin{proof}
    We know from Theorem~\ref{thm:cyclicity-peripheral} that the peripheral spectrum of $A$ is cyclic. On the other hand, due to the norm-continuity of the semigroup, the peripheral spectrum of $A$ is bounded \cite[Theorem~1.9]{MartinezMazon1996}. It follows that $\spb(A)$ is a dominant spectral value of $A$.
\end{proof}

\begin{remark}
    Previously, Corollary~\ref{cor:dominant-spectral-value} was only available for uniformly \emph{eventually} positive semigroups \cite[Lemma~3.3]{AroraGlueck2021a}. A careful glance at the proofs of convergence results \cite[Theorems~3.1 and~5.1]{AroraGlueck2021a} tells us that the uniform eventual positivity assumption was only needed in order to apply \cite[Lemma~3.3]{AroraGlueck2021a}. Therefore, Corollary~\ref{cor:dominant-spectral-value} tells us that in \cite[Theorems~3.1 and~5.1 and Corollary~3.2]{AroraGlueck2021a}, one can weaken the uniformly eventual positivity assumption to uniform asymptotic positivity.
\end{remark}

\section{Peripheral spectrum under asymptotic domination}
    \label{sec:domination}

A phenomenon closely related to positive $C_0$-semigroups is that of domination of semigroups. In this case, the spectral properties inherited by the dominated semigroup have been investigated in \cite{AndreuMazon1989}, \cite[Corollary~4.5]{Caselles1987}, and \cite[Section~6]{RabigerWolff2000}. Our main result in this section extends \cite[Corollary~6.2]{RabigerWolff2000} for eventually dominated semigroups, a notion introduced in \cite{GlueckMugnolo2021} and expanded upon in \cite{AroraGlueck2023b}. Actually, we only need the weaker notion of asymptotic domination:

\begin{definition}
    Let $(e^{tA})_{t \ge 0}$ and $(e^{tB})_{t \ge 0}$ be $C_0$-semigroups on a complex Banach lattice $E$ such that the rescaled semigroup $(e^{t(B-\spb(B))})_{t \ge 0}$ is bounded.
    We say that $(e^{tB})_{t \ge 0}$ \emph{uniformly asymptotically dominates} $(e^{tA})_{t \ge 0}$ if for each $\epsilon>0$, there exists $t_0\ge 0$ such that
    \[
        \dist\left( e^{t(B-\spb(B))}f - e^{t(A-\spb(B))} f    , E_+\right) \le \epsilon \norm{f}  
    \]
    for all $f\in E_+$ and all $t\ge t_0$.
\end{definition}

If $A$ and $B$ are generators of positive $C_0$-semigroups such that the semigroup generated by $A$ is dominated by the semigroup generated by $B$, then it is easy to see that $\spb(A) \le \spb(B)$; see \cite[Lemma~C-II-4.10]{Nagel1986}. The same remains true if the domination is merely asymptotic:

\begin{proposition}
    \label{prop:spectral-bound-under-domination}
    Let $(e^{tA})_{t \ge 0}$ and $(e^{tB})_{t \ge 0}$ be real $C_0$-semigroups on a complex Banach lattice $E$ such that the rescaled semigroup $(e^{t(B-\spb(B))})_{t \ge 0}$ is bounded
    and $(e^{tB})_{t \ge 0}$ uniformly asymptotically dominates $(e^{tA})_{t \ge 0}$.

    Then $(e^{t(A-\spb(B))})_{t \ge 0}$ is also bounded and in particular, $\spb(A)\le \spb(B)$.
\end{proposition}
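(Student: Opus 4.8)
The plan is to prove directly that $(e^{t(A-\spb(B))})_{t\ge 0}$ is bounded; the inequality $\spb(A)\le\spb(B)$ then follows at once, since $\spb(A)\le\gbd(A)$ and a bounded semigroup has growth bound $\le 0$. After replacing $A$ and $B$ by $A-\spb(B)$ and $B-\spb(B)$ (which changes neither the hypotheses nor the conclusion), I may assume $\spb(B)=0$, so that $M:=\sup_{t\ge 0}\norm{e^{tB}}<\infty$ and it suffices to bound $\norm{e^{tA}}$ uniformly for $t$ beyond some threshold, a $C_0$-semigroup being automatically bounded on compact time intervals. Fix $\epsilon\in(0,1)$ and pick $t_0\ge 0$ from the definition of uniform asymptotic domination.

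The heart of the argument is a norm estimate for $e^{tA}f$ with $f\in E_+$ and $t\ge t_0$. Here I would use that in a Banach lattice $\dist(x,E_+)=\norm{x^-}$, and that the semigroup being real forces $e^{tA}f$ into the real part of $E$, so its positive and negative parts are available. From $e^{tA}f\le e^{tB}f+(e^{tB}f-e^{tA}f)^-$ (both summands on the right lie in $E_+$), taking positive parts gives $(e^{tA}f)^+\le e^{tB}f+(e^{tB}f-e^{tA}f)^-$, hence $\norm{(e^{tA}f)^+}\le (M+\epsilon)\norm f$. For the negative part I invoke asymptotic domination a second time: the dominated orbit $e^{tA}f$ is itself asymptotically in $E_+$ — the definition sandwiches $e^{tA}f$ between $0$ and $e^{tB}f$ up to an $\epsilon\norm f$ error — so $\norm{(e^{tA}f)^-}=\dist(e^{tA}f,E_+)\le\epsilon\norm f$. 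Since the lattice norm is absolute, $\norm{e^{tA}f}=\norm{\modulus{e^{tA}f}}\le\norm{(e^{tA}f)^+}+\norm{(e^{tA}f)^-}\le (M+2\epsilon)\norm f$ for every $f\in E_+$ and $t\ge t_0$.

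From the cone this propagates to all of $E$: decomposing $f=(\re f)^+-(\re f)^-+i(\im f)^+-i(\im f)^-$ into four positive vectors of norm at most $\norm f$ and using linearity and realness of $e^{tA}$ gives $\norm{e^{tA}}\le 4(M+2\epsilon)$ for all $t\ge t_0$; combined with boundedness on $[0,t_0]$ this shows $(e^{tA})_{t\ge 0}$ is bounded. Consequently $\gbd(A)\le 0$, whence $\spb(A)\le\gbd(A)\le 0=\spb(B)$, and undoing the normalisation yields the proposition. The only genuinely delicate point is the control of $(e^{tA}f)^-$ in the second paragraph: because $e^{tA}$ is not a positive operator (unlike in the classical Räbiger--Wolff situation), the domination inequality on its own controls only the positive part, and one must exploit the full two-sided content of asymptotic domination — equivalently, that the dominated semigroup is also asymptotically positive — rather than the one-sided estimate $e^{tA}f\lesssim e^{tB}f$ alone.
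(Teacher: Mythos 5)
Your reduction to $\spb(B)=0$, your treatment of $(e^{tA}f)^+$, and the passage from the positive cone to all of $E$ are fine, and you have correctly located the crux in the control of $(e^{tA}f)^-$. But that is exactly where the argument breaks down. The definition of uniform asymptotic domination controls only $\dist(e^{tB}f-e^{tA}f,E_+)$, which is a one-sided \emph{upper} order bound $e^{tA}f\lesssim e^{tB}f$; it does not ``sandwich $e^{tA}f$ between $0$ and $e^{tB}f$'', since no comparison of $e^{tA}f$ with $0$ from below occurs anywhere in the hypotheses. Concretely, on $E=\bbC^2$ take
\[
    A=\begin{bmatrix} 0 & 0\\ -1 & 0\end{bmatrix}
    \quad\text{and}\quad B=0,
\]
so that $e^{tA}f=(f_1,\,f_2-tf_1)$ and $e^{tB}f-e^{tA}f=(0,\,tf_1)\ge 0$ for every $f\in E_+$ and every $t\ge 0$. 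Both semigroups are real, $\spb(B)=0$, $(e^{tB})_{t\ge 0}$ is bounded, and the domination hypothesis holds with distance identically $0$; yet for $f=(1,0)$ one has $(e^{tA}f)^-=(0,t)$, so $\norm{(e^{tA}f)^-}\to\infty$. Hence the estimate $\norm{(e^{tA}f)^-}=\dist(e^{tA}f,E_+)\le\epsilon\norm{f}$ is not a consequence of the stated assumptions, and with it your bound on $\norm{e^{tA}f}$ collapses. The gap can only be closed by an additional hypothesis supplying the missing lower bound, e.g.\ that $(e^{t(A-\spb(B))})_{t\ge0}$ is itself (asymptotically) positive, as in the classical R\"abiger--Wolff setting or as is available where the proposition is applied in Theorem~\ref{thm:spectrum-under-domination}.

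For comparison, the paper's proof is packaged differently: it picks $p_f(t)\in E_+$ with $e^{t(B-\spb(B))}f-e^{t(A-\spb(B))}f-p_f(t)$ small in norm, lets $r_f(t)$ be the negative part of this error, derives $e^{t(A-\spb(B))}f\le e^{t(B-\spb(B))}f+r_f(t)$ with $r_f(t)\to 0$, and then invokes the uniform boundedness principle. That route, too, must at some point convert a one-sided order bound on the orbit into a norm bound, which is precisely the step your example obstructs. So you have put your finger on the genuinely delicate point of the whole argument; the ``second use of asymptotic domination'' by which you propose to resolve it, however, is not something the definition provides.
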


\begin{proof}
    We borrow an argument from the proof of \cite[Proposition~2.8]{GlueckWolff2019}.
    Fix $f\in E_+$. Then for each $t\ge 0$, there exists $p_f(t)\in E_+$ such that
    \[
        \norm{ e^{t(B-\spb(B))}f - e^{t(A-\spb(B))} f - p_f(t) } \le \dist\left( e^{t(B-\spb(B))} f - e^{t(A-\spb(B))} f    , E_+\right) + \frac{1}{2^t}.
    \]
    Let $r_f(t)$ denote the negative part of  $e^{t(B-\spb(B))}f - e^{t(A-\spb(B))} f - p_f(t)$, then 
    \begin{align*}
        \norm{r_f(t)} &\le\norm{ e^{t(B-\spb(B))}f - e^{t(A-\spb(B))} f - p_f(t) }\\
                      &\le   \dist\left( e^{t(B-\spb(B))} f - e^{t(A-\spb(B))} f    , E_+\right) + \frac{1}{2^t}.
    \end{align*}
    Uniform asymptotic domination of the semigroup in particular implies that  
    \begin{equation}
        \label{eq:individual-asymptotic-domination}
            \lim_{t\to \infty} \dist\left( e^{t(B-\spb(B))} f - e^{t(A-\spb(B))} f    , E_+\right)=0.
    \end{equation}
    It follows that $\lim_{t\to \infty}r_f(t)=0$ and
    \[
        e^{t(B-\spb(B))} f + r_f(t) \ge e^{t(A-\spb(B))} f+p_f(t) \ge e^{t(A-\spb(B))} f
    \]
    for all $t\ge 0$. In turn, the orbit $(e^{t(A-\spb(B))}f)_{t \ge 0}$ is bounded. Since the positive cone of a Banach lattice is generating, the uniform boundedness principle yields that $(e^{t(A-\spb(B))})_{t \ge 0}$ is bounded. Consequently, $\spb(A) \le \gbd(A)\le \spb(B)$.
\end{proof}

\begin{remark}
    The limit in~\eqref{eq:individual-asymptotic-domination} is the canonical choice to define the \emph{individual asymptotic domination} of semigroups in the spirit of \cite[Definition~8.1(a)]{DanersGlueckKennedy2016b} and in this case Proposition~\ref{prop:spectral-bound-under-domination} remains true.
\end{remark}

We are now ready to state the main results of this section that generalises \cite[Corollary~6.2]{RabigerWolff2000} for asymptotically dominated semigroups.

\begin{theorem}
    \label{thm:spectrum-under-domination}
    Let $(e^{tA})_{t \ge 0}$ and $(e^{tB})_{t \ge 0}$ be real and uniformly asymptotically positive $C_0$-semigroups on a complex Banach lattice $E$. 
    If $(e^{tB})_{t \ge 0}$ uniformly asymptotically dominates $(e^{tA})_{t \ge 0}$,
    then
    \[
        \spec(A) \cap  (\spb(B)+i\bbR) \subseteq \perSpec(B).
    \]
\end{theorem}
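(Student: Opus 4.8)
The plan is to adapt the proof of \cite[Corollary~6.2]{RabigerWolff2000}, replacing the resolvent positivity available in the positive case by the ultrapower construction of Section~\ref{sec:ultrapowers}; once the set-up is in place, the endgame will run parallel to the proof of Theorem~\ref{thm:cyclicity-peripheral}. After rescaling by $\spb(B)$ we may assume $\spb(B)=0$. By Proposition~\ref{prop:spectral-bound-under-domination}, the semigroup $(e^{tA})_{t\ge 0}$ is bounded and $\spb(A)\le 0$; if $\spb(A)<0$, then $\spec(A)\cap i\bbR=\varnothing$ and there is nothing to prove, so assume $\spb(A)=0$. Fix $i\beta\in\spec(A)$; since $\re(i\beta)=0=\spb(A)$, this is a boundary point of $\spec(A)$ and hence an approximate eigenvalue of $A$ by \cite[Proposition~IV.1.10]{EngelNagel2000}. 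It suffices to show that $\norm{\Res(r+i\beta,B)}\to\infty$ as $r\downarrow 0$, for then $i\beta\in\spec(B)=\perSpec(B)$.

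Fix a free ultrafilter $\calU$ on $\bbN$, set $c:=2\pi/\beta$ if $\beta\ne 0$ and $c:=1$ otherwise, and consider the operators $S_A:=\bigl((e^{cnA})_n\bigr)^{\calU}$ and $S_B:=\bigl((e^{cnB})_n\bigr)^{\calU}$ on $E^{\calU}$. By uniform asymptotic positivity of the two semigroups together with Lemma~\ref{lem:ultrapower-positive}, the operators $S_A$ and $S_B$ are positive and power bounded, and Theorem~\ref{thm:ultrapower-bidual} (applied to $B$) shows that $F:=\Fix(S_B)''$ is a Banach lattice under an equivalent norm on which $\{R_B(\lambda)\restrict F\}_{\re\lambda>0}$ is a positive pseudo-resolvent, where $R_B(\lambda):=\bigl(\Res(\lambda,B)^{\calU}\bigr)''$; its proof moreover shows that $\Res(\lambda,B)^{\calU}S_B$ itself is positive for $\lambda>0$. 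Choosing a normalized approximate eigenvector $(x_n)$ of $A$ for $i\beta$ with $n\norm{(i\beta-A)x_n}\to 0$ (pass to a subsequence), Proposition~\ref{prop:eigenvector-fixed-space} yields $x^{\calU}:=(x_n)^{\calU}\in\Fix S_A$ with $\norm{x^{\calU}}=1$ and $r\,\Res(r+i\beta,A)^{\calU}x^{\calU}=x^{\calU}$ for all $r>0$.

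The two estimates replacing resolvent positivity rest on the observation that in $\Res(\lambda,\argument)^{\calU}S_{\argument}$ every time occurring under the Laplace integral is of the form $t+cn$ with $t+cn\ge cn\to\infty$, so that uniform asymptotic positivity of $A$ and uniform asymptotic domination both apply to the whole integrals; combined with Lemma~\ref{lem:ultrapower-positive} this gives, for $r>0$ and $y\in E^{\calU}$,
\[
    \modulus{\Res(r+i\beta,A)^{\calU}S_A\,y}\le\Res(r,A)^{\calU}S_A\,\modulus y
    \qquad\text{and}\qquad
    \Res(r,A)^{\calU}S_A\,\modulus y\le\Res(r,B)^{\calU}S_B\,\modulus y.
\]
Applying these to $x^{\calU}=S_Ax^{\calU}$ and writing $v:=\modulus{x^{\calU}}$, one obtains the key inequality $v\le r\,\Res(r,B)^{\calU}S_B\,v$ for all $r>0$, while the same computation bounds the modulus of $r\,\Res(r+i\beta,B)^{\calU}S_B\,x^{\calU}-x^{\calU}$ by $\Res(r,B)^{\calU}S_B\,v-\Res(r,A)^{\calU}S_A\,v$. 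Since $(S_B)''$ is order continuous (being an adjoint operator on the dual Banach lattice $(E^{\calU})''$), the increasing net $\bigl(((S_B)'')^{k}v\bigr)_{k}$ has a supremum $0\ne\tilde v\in F$ satisfying $\tilde v\le rR_B(r)\tilde v$ for all $r>0$. From here the argument proceeds exactly as in the proof of Theorem~\ref{thm:cyclicity-peripheral}, now with the $B$-pseudo-resolvent $R_B$ in place of the resolvent of $A$: one picks $\varphi\in F'$ with $\duality{\varphi}{\tilde v}\ne 0$, takes a weak${}^*$-cluster point $\psi\ge 0$ of $\{rR_B(r)'\modulus\varphi\}$ as $r\downarrow 0$ (so that $rR_B(r)'\psi=\psi$ for $r>0$ by the resolvent identity and $\duality{\psi}{\tilde v}>0$), passes to the quotient of $F$ by the closed ideal $I:=\{z\in F:\duality{\psi}{\modulus z}=0\}$ to get a positive pseudo-resolvent $\widehat R_B$ with fixed direction $\widehat{\tilde v}$, obtains from $x^{\calU}$ a vector $\widehat z$ in the quotient with $\modulus{\widehat z}=\widehat{\tilde v}$ and $r\widehat R_B(r+i\beta)\widehat z=\widehat z$, and finally invokes \cite[Proposition~C-III-2.7]{Nagel1986} (the signum-operator argument already used in Theorem~\ref{thm:cyclicity-peripheral}) to conclude $\norm{\Res(r+i\beta,B)}\ge c'/r$ for a constant $c'>0$ coming from the equivalent norm on $F$.

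I expect the main obstacle to be this last transfer. Since $\Fix S_B$ is in general not a sublattice of $E^{\calU}$, one is forced to pass to the bidual via Theorem~\ref{thm:ultrapower-bidual}; and, more delicately, the $i\beta$-eigenvector of the \emph{$A$}-pseudo-resolvent must be carried over to the \emph{$B$}-pseudo-resolvent on the quotient. This is feasible only because uniform asymptotic domination forces the two pseudo-resolvents to agree modulo the ideal $I$ -- which is exactly what the modulus estimate for $r\,\Res(r+i\beta,B)^{\calU}S_B\,x^{\calU}-x^{\calU}$ is designed to deliver -- and checking this compatibility, together with verifying that $\widehat{\tilde v}$ really is a fixed direction with $\modulus{\widehat z}=\widehat{\tilde v}$ so that the signum operators of \cite[Page~227]{BatkaiKramarRhandi2017} are available, is where the real work lies.
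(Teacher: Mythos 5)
Your global strategy is the same as the paper's: pass to the ultrapower and its bidual, use the Laplace representation with times $t+cn\to\infty$ to convert uniform asymptotic positivity/domination into genuine positivity and domination of the operators $\Res(r,\argument)^{\calU}S_{\argument}$, transfer the approximate eigenvalue $i\beta$ of $A$ to a singularity of the $B$-pseudo-resolvent by a quotient-ideal argument, and conclude $\norm{\Res(r+i\beta,B)}\ge c'/r$. The two modulus/domination estimates you state are correct and are also used in the paper. The problem is the mechanism by which you land in $F=\Fix (S_B)''$ and carry the eigenvector relation over to the $B$-pseudo-resolvent; as written, that part has genuine gaps.

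Concretely: (i) the construction of $\tilde v$ as the supremum of the ``increasing'' sequence $((S_B'')^k v)_k$, $v=\modulus{x^{\calU}}$, is unjustified — monotonicity would require $v\le S_B'' v$, and all you have is $v\le r\Res(r,B)^{\calU}S_B v$ for $r>0$; you cannot let $r\to\infty$ in the ultrapower (where $r\Res(r,B)^{\calU}$ need not converge strongly to the identity), and no other argument is given, so the existence of a fixed vector $\tilde v$ with $\tilde v\le rR_B(r)\tilde v$ is not established. (ii) Even granting $\tilde v$, the decisive step — producing $\widehat z$ in the quotient with $r\widehat{R_B}(r+i\beta)\widehat z=\widehat z$ — is only asserted (you yourself call it ``where the real work lies''). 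Your estimate $\modulus{\,r\Res(r+i\beta,B)^{\calU}S_Bx^{\calU}-x^{\calU}}\le r\Res(r,B)^{\calU}S_Bv-v$ puts the discrepancy into your ideal $I=\{z:\duality{\psi}{\modulus z}=0\}$ only if $\duality{\psi}{rR_B(r)S_B''v}=\duality{\psi}{v}$; the identity $rR_B(r)'\psi=\psi$ yields $\duality{\psi}{S_B''v}$ on the left, not $\duality{\psi}{v}$, so you would additionally need $\psi$ to be invariant under $(S_B'')'$, which is not proved. The paper circumvents both issues with a device absent from your proposal: via Ces\`aro means and universal nets (Lemma~\ref{lem:projection-existence}) it constructs positive projections $P_A,P_B$ onto $\Fix (S_A)''$ and $\Fix (S_B)''$ commuting with $R_A$, $R_B$, shows $0\le R_A(r)P_A\le R_B(r)P_B$, and then applies an abstract pseudo-resolvent lemma (Lemma~\ref{lem:resolvent-under-domination}) whose ideal is $\ker p$ with $p(\cdot)=\limsup_{r\downarrow 0}\norm{rR_B(r)P_B\modulus{\cdot}}$; there the sandwich $\modulus{\widehat x}=s\widehat{R_A}(s)\widehat{P_A}\modulus{\widehat x}=s\widehat{R_B}(s)\widehat{P_B}\modulus{\widehat x}$ forces $(s-i\beta)\widehat{R_B}(s)\widehat{P_B}\widehat x=\widehat x$ and hence the blow-up of $R_B$ at $i\beta$. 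Without a substitute for these projections (or another way to obtain a $\Fix(S_B)''$-element and to kill the $A$--$B$ discrepancy modulo the ideal), your argument is incomplete precisely at its crux.
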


Note that then notion of asymptotic domination is well-defined in Theorem~\ref{thm:spectrum-under-domination} because the asymptotic positivity assumption ensures that the semigroup generated by $B-\spb(B)$ is bounded.
 
The proof of \cite[Corollary~6.2]{RabigerWolff2000} relies on the fact the domination of positive semigroups implies the domination of the corresponding resolvents. While we don't have this luxury available, our results of Section~\ref{sec:ultrapowers} and experience of Section~\ref{sec:peripheral-spectrum} suggest that using ultrapower techniques and working with biduals might be a worthwhile endeavour. Before we do this, we outsource the major chunk of our argument to the following lemma whose proof is along the lines of \cite[Theorem~3.2]{RabigerWolff2000}.

\begin{lemma}
    \label{lem:resolvent-under-domination}
    Let $\{R_A(\lambda)\}_{\re \lambda>0}$ and $\{R_B(\lambda)\}_{\re \lambda>0}$ be pseudo-resolvents on a complex Banach lattice $E$. Assume that there exist positive projections $P_A, P_B$ on $E$ satisfying the following:
    \begin{enumerate}[\upshape (a)]
        \item The operators $P_A$ and $P_B$ commute with $R_A$ and $R_B$ respectively.
        \item For each $r>0$, we have $0\le R_A(r)P_A \le R_B(r)P_B$.
        \item The family $\{r R_B(r)P_B: r\in (0,1]\}$ is norm bounded.
    \end{enumerate}
    
    If $i\beta \in i\bbR$ is an eigenvalue of $R_A$ with an eigenvector $x \in \Fix P_A$, then $R_B$ also has a singularity at $i\beta$.
\end{lemma}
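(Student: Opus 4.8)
\textbf{Proof plan for Lemma~\ref{lem:resolvent-under-domination}.}

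The plan is to mimic the proof of \cite[Theorem~3.2]{RabigerWolff2000}, transporting the domination inequality (b) through the signum operator attached to the eigenvector $x$ and then extracting a uniform lower bound on $\norm{R_B(r+i\beta)}$. First I would normalise the situation: by replacing $R_A$, $R_B$ with their restrictions to the range of $P_A$, $P_B$ (which are $R_A$-, resp. $R_B$-invariant by (a)), I may assume $P_A = \id$ and $P_B = \id$, so that $0 \le R_A(r) \le R_B(r)$ for all $r>0$, the family $\{rR_B(r):r\in(0,1]\}$ is norm bounded, and $x$ is an eigenvector of the positive pseudo-resolvent $R_A$ for the eigenvalue $i\beta$, i.e. $(\lambda - i\beta)R_A(\lambda)x = x$ for $\re\lambda>0$. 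From~\eqref{eq:positive-pseudo-resolvent} applied to $R_A$ one gets $\modulus{x} = \modulus{rR_A(r+i\beta)x} \le rR_A(r)\modulus{x} \le rR_B(r)\modulus{x}$ for all $r>0$; letting $r\downarrow 0$ along a subnet and using norm-boundedness of $\{rR_B(r):r\in(0,1]\}$, together with weak${}^*$-compactness in the bidual (or a Banach--Alaoglu argument on the image $rR_B(r)\modulus{x}$ as in Step~1 of the proof of Theorem~\ref{thm:cyclicity-peripheral}), one produces a vector $0 \le z$ with $\modulus{x} \le z$ and $rR_B(r)z = z$; in particular $\modulus{x}$ is dominated by a genuine positive fixed vector $z$ of $\{rR_B(r)\}$.

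Next I would pass to a suitable principal ideal and a quotient so that $\modulus{x}$ becomes a quasi-interior point and the signum operator $S_x$ of \eqref{eq:signum-operator} becomes available. Concretely, following \cite[Theorem~3.2]{RabigerWolff2000}, one works in the Banach lattice $E_z$ (the principal ideal generated by $z$ with the gauge norm), on which $\{rR_B(r)\restrict{E_z}\}$ acts as a positive contractive pseudo-resolvent fixing $z$; one then forms the closed ideal $I:=\{y\in E_z : \duality{\psi}{\modulus{y}}=0\}$ for an appropriate strictly positive $R_B$-invariant functional $\psi$ obtained by the same weak${}^*$-limit procedure applied to functionals, ensuring $\modulus{x}\notin I$. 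In the quotient $\widehat{E_z} = E_z/I$, the image $\widehat{\modulus{x}}$ is a quasi-interior point, $\lambda\widehat{R_B}(\lambda)\widehat{\modulus{x}} = \widehat{\modulus{x}}$, and $S_{\widehat x}$ is well-defined and invertible. Since $R_A$ is dominated by $R_B$ and $x$ is an $R_A$-eigenvector for $i\beta$, the inequality $\modulus{R_B(\lambda)S_{\widehat x}^{-1}\widehat{\modulus{x}}} \le R_B(\re\lambda)\widehat{\modulus{x}}$ combined with the sandwiching $0\le R_A(r)\le R_B(r)$ lets one run the Wielandt-type estimate: one shows that $\lambda\widehat{R_B}(\lambda+i\beta)\widehat x = \widehat x$ cannot hold with $\widehat R_B$ remaining bounded near $i\beta$ unless $i\beta$ is a singularity, because $\norm{\widehat R_B(r+i\beta)} \ge \norm{r\widehat R_B(r+i\beta)\widehat x}/(r\norm{\widehat x}) \ge \norm{\widehat{\modulus{x}}}/(r\norm{\widehat x})$ via the domination inequality, forcing $\norm{R_B(r+i\beta)} \ge c/r \to \infty$ as $r\downarrow 0$. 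Hence $i\beta$ lies in the singular set of $R_B$.

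The main obstacle I expect is the construction, from the merely \emph{asymptotic} (rather than exact) domination encoded abstractly in hypotheses (b)--(c), of the positive fixed vector $z$ dominating $\modulus{x}$ and of the strictly positive invariant functional $\psi$: these require compactness arguments in a bidual together with careful bookkeeping that the weak${}^*$-limits retain both positivity and the fixed-point relations $rR_B(r)z=z$, $rR_B(r)'\psi=\psi$. A secondary technical point is checking that the signum operator, which a priori lives on the quotient $\widehat{E_z}$, intertwines the pseudo-resolvents in the way needed to propagate the eigenrelation from $i\beta$ to the cyclic translates --- but for the present lemma we only need the \emph{existence} of a singularity at $i\beta$, so it suffices to exhibit the blow-up $\norm{R_B(r+i\beta)}\to\infty$, which is the cleanest route and avoids the full cyclicity machinery.
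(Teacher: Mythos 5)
There is a genuine gap, and also an invalid reduction at the start. First, the normalisation ``replace $R_A,R_B$ by their restrictions to $\Ima P_A$, $\Ima P_B$ and assume $P_A=P_B=\id$'' does not make sense: $\Ima P_A$ and $\Ima P_B$ are in general \emph{different} subspaces, hypothesis (b) is an inequality between operators on all of $E$ composed with the two different projections, and after restricting to two different ranges the statement $0\le R_A(r)\le R_B(r)$ no longer even parses as a comparison of operators on one Banach lattice (moreover the lattice structure of the range of a positive projection need not be the one induced from $E$). This is not cosmetic, because all later moduli in your argument must be taken in $E$; the correct move is simply to keep $P_A,P_B$ explicit, as in the chain $\modulus{x}=\modulus{rR_A(r+i\beta)P_Ax}\le rR_A(r)P_A\modulus{x}\le rR_B(r)P_B\modulus{x}$, which indeed uses only \eqref{eq:positive-pseudo-resolvent}, (a), (b) and $x\in\Fix P_A$.

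The decisive step, however, is missing: you never explain how the eigen-relation for $R_A$ is transferred to $R_B$. Your final chain $\norm{\widehat R_B(r+i\beta)}\ge \norm{r\widehat R_B(r+i\beta)\widehat x}/(r\norm{\widehat x})\ge \norm{\widehat{\modulus{x}}}/(r\norm{\widehat x})$ is unjustified in its second inequality: domination only gives the \emph{upper} bound $\modulus{r\widehat R_B(r+i\beta)\widehat P_B\widehat x}\le r\widehat R_B(r)\widehat P_B\modulus{\widehat x}$, and knowing that $\modulus{\widehat x}$ is fixed by $r\widehat R_B(r)\widehat P_B$ for real $r$ says nothing by itself about $\widehat R_B$ at $r+i\beta$; the phrase ``cannot hold \dots unless $i\beta$ is a singularity'' is circular. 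What is actually needed (and what the paper does) is: quotient by the closed ideal $I=\ker p$ with $p(y)=\limsup_{r\downarrow 0}\norm{rR_B(r)P_B\modulus{y}}$, use the resolvent identity and (c) to show $sR_B(s)P_B\modulus{x}-\modulus{x}\in I$, so that in the quotient $s\widehat R_A(s)\widehat P_A\modulus{\widehat x}=s\widehat R_B(s)\widehat P_B\modulus{\widehat x}=\modulus{\widehat x}$; then the positive operator $\widehat R_B(s)\widehat P_B-\widehat R_A(s)\widehat P_A$ annihilates $\modulus{\widehat x}$ and hence annihilates $\widehat x$, giving $(s-i\beta)\widehat R_B(s)\widehat P_B\widehat x=(s-i\beta)\widehat R_A(s)\widehat P_A\widehat x=\widehat x$; finally $\widehat x\in\Ima\widehat P_B$ and the one-point criterion \eqref{eq:pseudo-resolvent-eigenvalue-sufficient} upgrade this to $s\widehat R_B(s+i\beta)\widehat x=\widehat x$ for all $s>0$, which yields $\norm{R_B(s+i\beta)}\ge 1/s$. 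Your signum-operator/Wielandt detour and the construction of a fixed vector $z$ and functional $\psi$ in the bidual are unnecessary for this lemma and, as sketched, do not supply this transfer step.
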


\begin{proof}
    We divide the proof into several steps.

    \emph{Step 1: We find a closed ideal to work on quotient spaces}.
    First of all, by assumption~(c), the function
    \[
        p(\argument) := \limsup_{r\downarrow 0} \Big\lVert r R_B(r) P_B \modulus{\argument}\Big\rVert
    \]
    is a continuous lattice seminorm on $E$ and so $I:= \ker p$ is a closed ideal of $E$. Denote the quotient $E/I$ by $\widehat E$ and the induced operators by  $\widehat{R_A}(\argument), \widehat{R_B}(\argument), \widehat{P_A}$, and $\widehat{P_B}$  respectively.
    Using the commutativity assumption~(a), we obtain
    \begin{align*}
        p( R_B(\lambda) P_B y) & = \limsup_{r\downarrow 0}  \big\lVert r R_B(r)P_B \modulus{R_B(\lambda)P_B y}\big\rVert \\
                               & \le \limsup_{r\downarrow 0}  \big\lVert r R_B(r)P_B R_B(\re \lambda) P_B \modulus{y}\big\rVert \\
                               & = \limsup_{r\downarrow 0}  \big\lVert R_B(\re \lambda) P_B r R_B(r)P_B  \modulus{y}\big\rVert \\
                               & \le \norm{R_B(\re \lambda) P_B  } p(y)
    \end{align*}
    for all $y\in E$ and $\re \lambda>0$.
    Therefore, $I$ leaves the operator family $\left\{ R_B(\lambda)P_B\right\}_{\re \lambda>0}$ invariant. The domination property in assumption~(b) even implies that $I$ leaves $\left\{ R_A(\lambda)P_A\right\}_{\re \lambda>0}$ invariant and
    \[
         0 \le \widehat{R_A}(r)\widehat{P_A} \le \widehat{R_B}(r)\widehat{P_B},\qquad r>0.
    \]

    \emph{Step 2: We show that $\widehat x:= x+I$ is an eigenvector of the pseudo-resolvent $\left\{ \widehat{R_A}(\lambda)\right\}_{\re \lambda>0}$}.
    Since $x$ is an eigenvector of $R_A$ with eigenvalue $i\beta$, so $\lambda R_A(\lambda+i\beta)x=x$ whenever $\re\lambda>0$. Assumption~(b), $x\in \Fix P_A$, and positivity of $P_A$ now allow us -- for each $r>0$ --  to estimate
    \begin{equation}
        \label{eq:domination-modulus}
        \modulus{ x} = \modulus{r R_A(r+i\beta)P_Ax } \le r R_A(r) P_A \modulus{x} \le r R_B(r) P_B \modulus{x};
    \end{equation}
    where the first inequality is true due to~\eqref{eq:positive-pseudo-resolvent}. Therefore, $p(x) \ge \norm{x}>0$ and so $\widehat x \ne 0$. Moreover, $\lambda \widehat{R_A}(\lambda+i\beta)\widehat x=\widehat x$ whenever $\re\lambda>0$.

    \emph{Step 3: For fixed $s>0$, we show that $s R_B(s)P_B\modulus{x}-\modulus{x} \in I$}.
    Since $P_B$ is a positive projection that commutes with $R_B$, we deduce from~\eqref{eq:domination-modulus} and the resolvent identity that
    \begin{align*}
        p(s R_B(s)P_B\modulus{x}-P_B\modulus{x}) 
            & = \limsup_{r\downarrow 0} \norm{r R_B(r)P_B \big(s R_B(s)P_B\modulus{x}-\modulus{x}\big)}  \\
            & = \limsup_{r\downarrow 0} \big\lVert rs R_B(r) R_B(s)P_B\modulus{x}-rR_B(r)P_B\modulus{x}\big\rVert  \\
            & = \limsup_{r\downarrow 0} \norm{ r(s-r)^{-1} \big(rR_B(r)P_B\modulus{x} -sR_B(s)P_B\modulus{x}\big)}  \\
            & = 0
    \end{align*}
    by assumption~(c). In turn, $s R_B(s)P_B\modulus{x}-\modulus{x} \in I$.

    \emph{Step 4: We show that $i\beta$ is a singularity of $R_B$}.
    From Step 3, we obtain that 
    \[
        \modulus{\widehat x}= \modulus{s \widehat{R_A}(s+i\beta)\widehat{P_A}\widehat x} \le s \widehat{R_A}(s)\widehat{P_A}\modulus{\widehat  x} \le s \widehat{R_B}(s)\widehat{P_B}\modulus{\widehat x}=\modulus{\widehat x}.
    \]
    In particular, $s \widehat{R_A}(s)\widehat{P_A}\modulus{\widehat  x} = s \widehat{R_B}(s)\widehat{P_B}\modulus{\widehat x}$ and so
    \[
        \modulus{ (s-i\beta) \big( \widehat{R_B}(s)\widehat{P_B}\widehat{x}- \widehat{R_A}(s)\widehat{P_A}\widehat{x}\big)}
        \le \modulus{s-i\beta} s^{-1} \left( s \widehat{R_B}(s)\widehat{P_B} - s \widehat{R_A}(s)\widehat{P_A}\right)\modulus{\widehat x}  =0.
    \]
    We have thus proved that
    \[
        (s-i\beta) \widehat{R_B}(s)\widehat{P_B}\widehat{x}=(s-i\beta) \widehat{R_A}(s)\widehat{P_A}\widehat{x} = \widehat x.
    \]
    Once again, as $P_B$ is a projection that commutes with $R_B$, we obtain that $\widehat x \in \Ima \widehat{P_B}$. This allows us to employ~\eqref{eq:pseudo-resolvent-eigenvalue-sufficient} to infer that $\widehat x$ is an eigenvector of the pseudo-resolvent $\widehat{R_B}(\argument)$ corresponding to the eigenvalue $i\beta$. Therefore, $s \widehat{R_B}(s+i\beta)\widehat{x}=\widehat{x}$. As a result,
    \[
        \norm{R_B(s+i\beta} \ge \norm{\widehat{R_B}(s+i\beta)}\ge \frac1s.
    \] 
    Since $s>0$ was arbitrary, it follows that $i\beta$ is a singularity of $R_B$.
\end{proof}

In order to apply Lemma~\ref{lem:resolvent-under-domination}, we need the following sufficient condition that guarantees the existence of a positive projection onto the fixed space of a positive power-bounded operator.

\begin{lemma}
    \label{lem:projection-existence}
    Let $T$ be a power-bounded weak${}^*$-continuous linear operator on a dual Banach space $X'$. Then there exists a projection $P$ onto the fixed space of $T$ such that the following hold:
    \begin{enumerate}[\upshape (a)]
        \item If $R \in \calL(X)$ is weak${}^*$-continuous and commutes with $T$, then $R$ also commutes with $P$.
        \item If $X$ is a Banach lattice and $T$ is positive, then so is $P$.
    \end{enumerate}
\end{lemma}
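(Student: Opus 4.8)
The plan is to realise $P$ as a mean ergodic projection built from the powers of $T$, where the averages are taken as a weak${}^*$-limit along an ultrafilter so that no reflexivity of $X$ is needed. First, fix a free ultrafilter $\calU$ on $\bbN$, put $M:=\sup_{k\ge 0}\norm{T^k}<\infty$, and for $n\in\bbN$ consider the Ces\`aro means $T_n:=\frac1n\sum_{k=0}^{n-1}T^k$, which satisfy $\norm{T_n}\le M$ for all $n$. For each $\varphi\in X'$ the sequence $(T_n\varphi)_n$ stays in the ball of radius $M\norm{\varphi}$ of the dual space $X'$, which is weak${}^*$-compact by the Banach--Alaoglu theorem; hence the limit $P\varphi:=\lim_{\calU}T_n\varphi$ along $\calU$ exists in the $\sigma(X',X)$-topology, and this defines a linear operator $P$ with $\norm{P}\le M$.

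Next I would check that $P$ is a projection with $\Ima P=\Fix T$. On the one hand, if $T\varphi=\varphi$, then $T_n\varphi=\varphi$ for every $n$ and therefore $P\varphi=\varphi$; so $P$ acts as the identity on $\Fix T$, and in particular $\Fix T\subseteq\Ima P$. On the other hand, for arbitrary $\varphi\in X'$, weak${}^*$-continuity of $T$ allows one to pull $T$ through the limit, giving $TP\varphi=\lim_{\calU}TT_n\varphi$; since $TT_n\varphi-T_n\varphi=\frac1n(T^n\varphi-\varphi)\to 0$ in norm (here power-boundedness enters), this forces $TP\varphi=P\varphi$, that is, $\Ima P\subseteq\Fix T$. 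Combining the two inclusions yields $\Ima P=\Fix T$, and then $P^2=P$ because $P$ is the identity on its own range.

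Finally, I would obtain (a) and (b) by transporting the relevant structure through the ultrafilter limit. For (a), if $R$ is weak${}^*$-continuous and $RT=TR$, then $R$ commutes with every power $T^k$, hence with each $T_n$; using weak${}^*$-continuity of $R$ one gets $RP\varphi=\lim_{\calU}RT_n\varphi=\lim_{\calU}T_nR\varphi=PR\varphi$. For (b), if $X$ is a Banach lattice and $T\ge 0$, then each $T_n$ is positive, so $T_n\varphi$ lies in the positive cone of the dual Banach lattice $X'$ whenever $\varphi\ge 0$; as this cone is weak${}^*$-closed, the limit $P\varphi$ is again positive, i.e.\ $P$ is a positive operator.

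The step I expect to be the crux is the identity $\Ima P=\Fix T$: it is exactly here that power-boundedness (to keep the Ces\`aro means uniformly bounded and the ball weak${}^*$-compact) and weak${}^*$-continuity of $T$ (to commute $T$ past the limit) are both needed, and it is the reason one must average the powers rather than work with them directly. A minor but worth-flagging subtlety is that $P$ need \emph{not} be weak${}^*$-continuous itself; fortunately the statement does not require this, and the commutation property (a) only needs weak${}^*$-continuity of the operator $R$ against which $P$ is tested.
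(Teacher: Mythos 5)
Your proposal is correct and follows essentially the same route as the paper's proof: Cesàro averaging of the powers of $T$, weak${}^*$-compactness via Banach--Alaoglu to extract a generalized limit, the telescoping identity $TT_n - T_n = \tfrac1n(T^n - I)$ to show the range lands in $\Fix T$, and weak${}^*$-closedness of the dual cone for positivity. The only (inessential) difference is that you take the limit along a free ultrafilter while the paper passes to a universal subnet of the Cesàro means; these are interchangeable devices here.
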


Although Lemma~\ref{lem:projection-existence} is likely familiar to experts in operator theory, we could not locate it explicitly stated in this exact form in the literature. For the reader's convenience, we provide the detailed arguments. Our proof uses the concept of universal nets for which we refer to \cite{Willard1970}.

\begin{proof}[Proof of Lemma~\ref{lem:projection-existence}]
    Consider the Cesàro means
    \[
        C_n := \frac1n\sum_{k=0}^{n-1} T^k,\qquad (n\in\bbN).
    \]
    Let $\left(C_{n(\alpha)}\right)_{\alpha \in I}$ be a universal subnet of $(C_n)_{n\in\bbN}$. Then for each $x \in X$, the net $\left(C_{n(\alpha)}x\right)_{\alpha \in I}$ is also universal. Moreover, as $T$ is power-bounded, so by Banach-Alaoglu theorem, this net lies in a weak${}^*$-compact subset of $X'$. Using
    the fact that universal nets in compact spaces are convergent \cite[Theorem~17.4]{Willard1970}, it follows that $C_{n(\alpha)}x$ converges to, say $Px$, in the weak${}^*$-topology. Or equivalently, $C_{{n(\alpha)}}$ converges to $P$ in the weak${}^*$-operator topology. Of course, $P:X\to X$ is linear. 

    We show that $P$ is a projection onto the fixed space $\Fix(T)$. First, note that since $(C_{n(\alpha)})_{\alpha \in I}$ is subnet of $(C_n)_{n\in\bbN}$, so $n:I\to \bbN$ is monotone and $n(I)$ is cofinal in $\bbN$. Thus, the equality
    \[
        (T-I)C_{n(\alpha)}=\frac{1}{n(\alpha)}(T-I) \sum_{k=0}^{n(\alpha)-1} T^k = \frac{1}{n(\alpha)}\left(T^{n(\alpha)} - I\right),
    \]
    and power-boundedness of $T$ implies that $(T-I)C_{\alpha}$ converges to $0$ in the weak${}^*$-operator topology. On the other hand, weak${}^*$-continuity of $T$ implies that $(T-I)C_{\alpha}$ converges to $(T-I)P$ in the weak${}^*$-operator topology. Thus, $(T-I)P=0$. This shows that $\Ima P \subseteq \Fix(T)$. Furthermore, for $x \in \Fix(T)$, we obtain
    \[
        C_{n(\alpha) }x= \frac{1}{n(\alpha)} \sum_{k=0}^{n(\alpha)-1} T^kx=x,
    \]
    so by weak${}^*$-convergence $Px=x$. Thus, $\Ima P = \Fix(T)$ and $P\restrict{\Fix(T)}$ is the identity operator, which is why $P$ is a projection.

    (a) Let $R \in \calL(X)$ be weak${}^*$-continuous operator that commutes with $T$. Then the relation
    \[
        RC_{n(\alpha)} = R\left(\frac{1}{n(\alpha)}\sum_{k=0}^{n(\alpha)-1} T^k\right)= \left(\frac{1}{n(\alpha)}\sum_{k=0}^{n(\alpha)-1} T^k\right) R = C_{n(\alpha)}R
    \]
    along with the weak${}^*$-operator convergence implies that $RP=PR$, as desired.

    (b) This is simply a consequence of the weak${}^*$-closedness of the dual cone $X_+$.
\end{proof}

\begin{proof}[Proof of Theorem~\ref{thm:spectrum-under-domination}]
    Without loss of generality, assume that $\spb(B)=0$ and let $i\beta \in i\bbR$ be a spectral value of $A$. Then Proposition~\ref{prop:spectral-bound-under-domination} implies that $\spb(A)=0$.

    Let $\calU$ be a free ultrafilter on $\bbN$.
    We work with the bi-adjoints on the space $G:=\left(E^{\calU}\right)''$ for which we fix the notations:
    \[
        R_A(\argument)=\left(\Res(\argument, A)^{\calU}\right)''
        \quad  \text{ and } \quad
        S_A^\beta:=  
            \begin{cases}
                    \left(\left(\left(e^{ nA}\right)_n\right)^{\calU}\right)'' \qquad & \text{if }\beta=0\\
                    \left(\left(\left(e^{\frac{2\pi n}{\beta}A}\right)_n\right)^{\calU}\right)'' \qquad & \text{if }\beta\ne 0
            \end{cases}
    \]
    and similarly, $R_B(\argument)$ and $S_B^\beta$.
    Uniform asymptotic positivity allow us to employ Theorem~\ref{thm:ultrapower-bidual} to deduce that $S_A^\beta$ and $S_B^\beta$ are positive, power bounded, and that $F_A:=\Fix  S_A$ and $F_B:=\Fix  S_B$ are both Banach lattices after equivalent renorming. Let $P_A$ and $P_B$ denote the projections onto $F_A$ and $F_B$ respectively, that exist and are positive by Lemma~\ref{lem:projection-existence}.

    In addition, as the semigroup operators commute with the resolvent of the generator, the pseudo-resolvent $\{R_A(\lambda)\}_{\re \lambda>0}$ leaves $F_A$ invariant. Since $R_A$ is weak${}^*$-continuous, we infer from Lemma~\ref{lem:projection-existence}(a) that  $R_A$ commutes with $P_A$. Likewise, $R_B$ commutes with $P_B$.
    Uniform asymptotic domination and positivity of the semigroups also guarantee that $0\le R_A(r)P_A \le R_B(r)P_B$ for all $r>0$; cf. proof of Theorem~\ref{thm:ultrapower-bidual}(b).
    Whence, all assumptions of Lemma~\ref{lem:resolvent-under-domination} are fulfilled; note that condition~(c) is satisfied due to the boundedness of the corresponding semigroup (which holds because they are uniformly asymptotically positive).

    Finally, because $i\beta \in i\bbR$ is a spectral value of $A$, Proposition~\ref{prop:eigenvector-fixed-space} ensures that $i\beta$ is an eigenvalue of $R_A$ with an eigenvector in $F_A=\Fix P_A$. Employing Lemma~\ref{lem:resolvent-under-domination}, we get that $i\beta$ is also a singularity of $R_B$. Consequently, \cite[Proposition~2.14(c)]{RabigerWolff2000} implies that $i\beta \in \spec(B)$.
\end{proof}

\section{Asymptotic behaviour}
    \label{sec:convergence}

In the study of operator semigroups, convergence properties are of significant interest. Similar to the long-term behaviour of positive semigroups, eventually positive semigroups also exhibit certain restrictive properties. These restrictions significantly aid in proving the convergence to equilibrium for many eventually positive semigroups, whether individually or uniformly eventually positive, as demonstrated in \cite{AroraGlueck2021a}. In this section, we introduce additional conditions beyond eventual positivity to derive more comprehensive convergence results. These supplementary conditions are closely related to the theory of eventual positivity. In the first subsection, we explore a weaker form of eventual \emph{strong} positivity (see, \cite{DanersGlueckKennedy2016b, DanersGlueck2018b, DanersGlueckKennedy2016a} or Section~\ref{sec:non-empty-spectrum}). In the subsequent subsection, we examine persistently irreducible semigroups.
 
\subsection*{Convergence of eventually positive semigroups}

For individually eventually positive semigroups on Banach lattices, it was shown in \cite[Corollary~2.2]{AroraGlueck2021a} that relative compactness of the semigroup orbits and boundedness of the intersection of point spectrum with the imaginary axis ensures the strong convergence of the semigroup operators. Along these lines, our main result of this subsection is as follows:

\begin{proposition}
    \label{prop:strong-convergence-sufficient}
    Let $E$ be a complex Banach lattice and let $(e^{tA})_{t \ge 0}$ be a  real and individually eventually positive semigroup on $E$ such that for every $0\lneq f\in E$, there exists $t_f\ge 0$ such that $e^{t_f A}f$ is a quasi-interior point of $E_+$.

    If $(e^{tA})_{t \ge 0}$ has relatively compact orbits, then exactly one of the following holds:
    \begin{enumerate}[\upshape (i)]
        \item The semigroup $(e^{tA})_{t \ge 0}$ converges strongly to $0$.
        \item The semigroup $(e^{tA})_{t \ge 0}$ converges strongly to the rank-one projection $u\otimes \varphi$ for a quasi-interior point $u$ of $E_+$ and a strictly positive functional $\varphi\in E'$ such that $u\in \ker A$ and $\varphi \in \ker A'$.
    \end{enumerate}
\end{proposition}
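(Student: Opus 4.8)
The plan is to deduce strong convergence from the criterion \cite[Corollary~2.2]{AroraGlueck2021a} --- relative compactness of the orbits together with boundedness of $\pntSpec(A)\cap i\bbR$ forces $(e^{tA})_{t\ge 0}$ to converge strongly --- after first showing that under the present hypotheses $\pntSpec(A)\cap i\bbR$ reduces to $\{0\}$ (or is empty), and then to identify the strong limit by means of the structure theory of persistently irreducible eventually positive semigroups from \cite{AroraGlueck2024}. The crucial, and hardest, step will be ruling out non-zero imaginary eigenvalues.

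\emph{Step 1 (preliminaries).} First I would observe that each orbit, being relatively compact, is bounded, so by the uniform boundedness principle $M:=\sup_{t\ge 0}\norm{e^{tA}}<\infty$; in particular $\spb(A)\le\gbd(A)\le 0$. Next, the hypothesis that every $0\lneq f$ is eventually mapped into the set of quasi-interior points, together with individual eventual positivity, rules out non-trivial eventually invariant closed ideals: if $I$ were one, then picking $0\lneq f\in I$ and a sufficiently large $t$ with $e^{tA}f\in I$ quasi-interior forces $\overline{E_{e^{tA}f}}\subseteq I$, i.e.\ $I=E$ (cf.\ \cite[Remark~3.4]{AroraGlueck2024}). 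Hence $(e^{tA})_{t\ge 0}$ is persistently irreducible, which makes available \cite[Proposition~4.1 and Corollary~4.4]{AroraGlueck2024}: positive non-zero eigenvectors of $A$ are quasi-interior, and, once $0\in\pntSpec(A)$, the space $\ker A$ is one-dimensional and $\ker A'$ contains a strictly positive functional.

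\emph{Step 2 (the crux): $\pntSpec(A)\cap i\bbR\subseteq\{0\}$.} Let $i\alpha\in\pntSpec(A)\cap i\bbR$ with eigenvector $f\ne 0$, so $e^{tA}f=e^{i\alpha t}f$ for all $t\ge 0$. The first point --- which in the individual setting is supplied by \cite[Lemma~4.2]{DanersGlueck2017}, cf.\ the proof of Lemma~\ref{lem:cyclic-peripheral-point} and the role of relative compactness there --- is that $\modulus f\in\ker A$; in particular $0\in\pntSpec(A)$, so by Step 1 $\ker A$ is spanned by $\modulus f$, which is quasi-interior, and $\ker A'$ carries a strictly positive functional. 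Lemma~\ref{lem:cyclic-peripheral-point} then gives that $\pntSpec(A)\cap i\bbR$ is an additive subgroup of $i\bbR$. Assume $\alpha\ne 0$ for contradiction. Since $\modulus f$ is quasi-interior, the signum operator $S_f$ with respect to $f$ is well-defined and, exactly as in the proof of Lemma~\ref{lem:cyclic-peripheral-point}, $e^{tA}=S_f^{-1}e^{t(A-i\alpha)}S_f$ for all large $t$. I would then pass to the principal ideal $E_{\modulus f}$, on which the semigroup eventually acts and which, via Kakutani's theorem, is a $C(K)$-model; there the conjugacy exhibits $(e^{tA})_{t\ge 0}$ as a rotation relative to $S_f$, and relative compactness of the orbits lets one transfer the elementary $C(K)$-picture --- a rotation always carries some positive vector permanently off the quasi-interior part of the cone --- into a genuine violation of the hypothesis. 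Hence $\alpha=0$. This passage from ``$\pntSpec(A)\cap i\bbR$ is a subgroup'' to ``it is $\{0\}$'' is the technical heart, and the place where the quasi-interior hypothesis really does its work.

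\emph{Step 3 (convergence and the limit).} By Step 2 and \cite[Corollary~2.2]{AroraGlueck2021a}, $e^{tA}\to P$ strongly as $t\to\infty$ for some $P\in\calL(E)$. The semigroup law and strong continuity give $P^2=P$ and $e^{sA}P=P=Pe^{sA}$ for all $s\ge 0$, so $\Ima P$ is the common fixed space of the semigroup, namely $\ker A$, and $P$ annihilates $\overline{\Ima A}$; moreover $P\ge 0$, since $e^{tA}f$ is eventually positive for $0\le f$ and $E_+$ is closed. If $\ker A=\{0\}$ then $P=0$ and (i) holds. Otherwise $P\ne 0$, and by Step 2 $\ker A=\linSpan\{u\}$ for a quasi-interior $u\in E_+$, so $P=u\otimes\varphi$ with $0\ne\varphi\in E'_+$ (positivity of $\varphi$ from $P\ge 0$). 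To see $\varphi$ is strictly positive: if $0\lneq h$ with $\varphi(h)=0$, then $Ph=0$, hence $e^{tA}h\to 0$; choosing $t_h$ with $u_h:=e^{t_h A}h$ quasi-interior gives $Pu_h=Pe^{t_h A}h=Ph=0$, and since $P\ge 0$ and $-c u_h\le k\le c u_h$ for real $k\in E_{u_h}$, $P$ vanishes on the dense ideal $E_{u_h}$, so $P=0$, a contradiction. Finally $u\in\ker A$ by construction and $e^{sA'}\varphi=\varphi$ for all $s$ (from $Pe^{sA}=P$), so $\varphi\in\ker A'$; this is case (ii), and (i), (ii) are plainly mutually exclusive.
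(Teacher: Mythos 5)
Your Step 3 is essentially sound and your Step 1 is plausible, but Step 2 --- which you yourself identify as the technical heart --- contains a genuine gap, and the mechanism you propose for closing it does not work as described. Having reduced to $e^{tA}=S_f^{-1}e^{t(A-i\alpha)}S_f$ with $\modulus{f}$ a quasi-interior fixed vector, you claim that in the $C(K)$-model of $E_{\modulus{f}}$ ``a rotation always carries some positive vector permanently off the quasi-interior part of the cone,'' violating the hypothesis. The natural candidates are the vectors $h_\theta:=\modulus{f}+\re(e^{i\theta}f)$, which satisfy $e^{tA}h_\theta=h_{\theta+\alpha t}$ and always have a zero in $C(K)$ --- but having a zero in $C(K)\cong E_{\modulus f}$ only means $h_\theta$ fails to be quasi-interior \emph{in the principal ideal}, not in $E$: for the rotation group on $L^p(\mathbb{T})$ the function $1+\cos(x+\theta)$ is a.e.\ positive and hence quasi-interior in $L^p$, even though it vanishes at a point. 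So the $C(K)$-picture does not transfer to a violation of the hypothesis, and since $E_{\modulus f}$ is in general not invariant under the semigroup, there is no obvious repair along these lines. (A secondary issue: obtaining $\modulus f\in\ker A$ and the subgroup property in the individually-eventually-positive setting needs the strictly positive functional in $\ker A'$, which you have not yet produced at that stage; you gesture at \cite[Lemma~4.2]{DanersGlueck2017} but do not close this loop.)

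The paper avoids imaginary eigenvalues entirely at this stage and argues via the Jacobs--de Leeuw--Glicksberg decomposition (Lemma~\ref{lem:jdlg}): it takes the positive JdLG projection $P$ whose range is spanned by the unimodular eigenvectors and on which the semigroup extends to a positive bounded group, and then uses the quasi-interior hypothesis together with Lemma~\ref{lem:qi-to-qi} (applied to $\bigl(e^{t_fA}\restrict{\Ima P}\bigr)^{-1}$) to show that \emph{every} positive non-zero element of the Banach lattice $\Ima P$ is quasi-interior in $(\Ima P)_+$, which forces $\dim\Ima P=1$ by a classical lemma of Lotz. One-dimensionality of $\Ima P$ then bounds $\pntSpec(A)\cap i\bbR$, and \cite[Theorem~2.1]{AroraGlueck2021a} yields $\pntSpec(A)\cap i\bbR=\{0\}$. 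This dimension-counting step is exactly what is missing from your argument; I would recommend replacing your Step 2 by it (or by the equivalent observation that an additive subgroup $i\alpha\bbZ$ with $\alpha\neq 0$ would make $\Ima P$ infinite-dimensional, again contradicting $\dim\Ima P=1$).
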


In Theorem~\ref{thm:strong-convergence-irreducible}, we give sufficient conditions which ensure that the strong positivity assumption in Proposition~\ref{prop:strong-convergence-sufficient} is fulfilled.
The statement of Proposition~\ref{prop:strong-convergence-sufficient} is inspired from the analogous result for positive (not necessarily strongly continuous) semigroups in \cite[Theorem~4.1]{GlueckWeber2020}. We recall that semigroups with relatively compact orbits are sometimes called \emph{almost periodic}. A complete characterization is given in \cite[Lemma~V.2.3]{EngelNagel2000}. In particular, bounded semigroups whose generators have compact resolvent have relatively compact orbits.

The proof of Proposition~\ref{prop:strong-convergence-sufficient} relies on Jacobs-de Leeuw-Glicksberg (JdLG) decomposition of eventually positive semigroups that was implicitly proved in \cite[Theorem~6.3.2]{Glueck2016}. The statement that we precisely need is given in the following lemma:

\begin{lemma}
    \label{lem:jdlg}
    On a complex Banach lattice $E$, 
    let $(e^{tA})_{t \ge 0}$ be an individually eventually positive $C_0$-semigroup.
    
    If $(e^{tA})_{t \ge 0}$ has relatively weakly compact orbits, then
    there exists a positive projection $P\in \calL(E)$ commuting with the semigroup $(e^{tA})_{t \ge 0}$ such that the following assertions hold.
    \begin{enumerate}[\upshape (a)]
        \item The semigroup $\left(e^{tA}\restrict{\Ima P}\right)_{t\geq 0}$ extends to a bounded positive $C_0$-group on $\Ima P$.
        \item The range $\Ima P$ is the closed linear span of eigenvectors of $A$ belonging to eigenvalues on the imaginary axis $i\bbR$.
        \item If $(e^{tA})_{t \ge 0}$ has relatively compact orbits, then the kernel of $P$ coincides with $\{ f\in E: \lim_{t\to \infty}e^{tA}f= 0\}$.
    \end{enumerate}
\end{lemma}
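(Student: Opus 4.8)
The plan is to derive the decomposition from the classical Jacobs--de Leeuw--Glicksberg (JdLG) theory and then use individual eventual positivity to turn the relevant operators into positive ones; eventual positivity will enter only through the weak-closedness of $E_+$, once one knows that the JdLG projection -- and each element of the associated limit group -- can be realised as a weak-operator limit of $e^{t_\alpha A}$ with $t_\alpha \to \infty$.

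First I would note that relatively weakly compact orbits force boundedness: each orbit, being weakly compact, is norm bounded, so the uniform boundedness principle gives $M := \sup_{t\ge 0}\norm{e^{tA}} < \infty$. Relative weak compactness moreover ensures that the weak-operator closure $\mathcal S := \overline{\{e^{tA} : t\ge 0\}}$ lies inside $\calL(E)$ (a weak-operator limit $T$ of a net $e^{t_\alpha A}$ sends each $f$ to a weak cluster point of $(e^{t_\alpha A}f)_\alpha$, which belongs to $E$), and that $\mathcal S$ is weak-operator compact (embed $\mathcal S$ into $\prod_{f\in E} C_f$, where $C_f$ is the weak closure of the orbit of $f$, and invoke Tychonoff). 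Hence $\mathcal S$ is an abelian, weak-operator compact, right-topological monoid, and the JdLG theory for bounded semigroups with relatively weakly compact orbits (see \cite[Section~V.2]{EngelNagel2000}; cf.\ \cite[Theorem~6.3.2]{Glueck2016} for the eventually positive case) provides a projection $P \in \mathcal S$ commuting with $(e^{tA})_{t\ge0}$, the splitting $E = \Ima P \oplus \ker P$, assertion (b), an extension of $(e^{tA}\restrict{\Ima P})_{t\ge0}$ to a $C_0$-group $(U(t))_{t\in\bbR}$ on $\Ima P$, and -- once the orbits are even relatively norm compact -- the identification of $\ker P$ with the flight vectors, which routinely sharpens to (c).

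It remains to establish positivity, and this is where I would invest the effort. The projection $P$ is the unit of the minimal (two-sided) ideal $K$ of $\mathcal S$; since $e^{t_0A}\mathcal S$ is a two-sided ideal of the abelian monoid $\mathcal S$, minimality gives $K \subseteq e^{t_0A}\mathcal S = \overline{\{e^{tA} : t\ge t_0\}}$ for every $t_0 \ge 0$ (the equality uses that left multiplication by $e^{t_0A}$ is weak-operator continuous and that $e^{t_0A}\mathcal S$ is weak-operator compact, hence closed). Thus $P \in \overline{\{e^{tA} : t\ge t_0\}}$ for all $t_0$, and a standard argument -- indexing by pairs consisting of a time $s\ge 0$ and a weak-operator neighbourhood of $P$ -- produces a net with $t_\alpha \to \infty$ and $e^{t_\alpha A} \to P$ in the weak operator topology. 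For $f \in E_+$, individual eventual positivity yields $e^{t_\alpha A}f \ge 0$ for all large $\alpha$, so $Pf \ge 0$ because $E_+$ is weakly closed; hence $P \ge 0$, and therefore $\Ima P$ is a Banach lattice for the cone $\Ima P \cap E_+ = P(E_+)$ with lattice operations transported through $P$. For (a), the group $(U(t))_{t\in\bbR}$ lies in the weak-operator closure $\mathcal K$ of $\{e^{tA}\restrict{\Ima P} : t\ge 0\}$ in $\calL(\Ima P)$, which by the JdLG theory is a compact abelian group; since $e^{t_0A}\restrict{\Ima P} \in \mathcal K$ and $\mathcal K$ is a group, $\{e^{tA}\restrict{\Ima P} : t\ge t_0\}$ is weak-operator dense in $\mathcal K$ for every $t_0$, so every element of $\mathcal K$ -- in particular each $U(t)$ -- is a weak-operator limit of operators $e^{t_\alpha A}\restrict{\Ima P}$ with $t_\alpha \to \infty$ and is therefore positive on $\Ima P$ by the same argument; norm-boundedness of $\mathcal K$, and hence of the group, follows once more from the uniform boundedness principle.

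I expect the main obstacle to be the bookkeeping in the JdLG construction -- specifically, verifying that $P$ and each $U(t)$ arise as weak-operator limits of $e^{t_\alpha A}$ along nets with $t_\alpha \to \infty$, which is exactly what lets eventual positivity be used; the remaining ingredients (boundedness, compactness of $\mathcal S$, the flight-vector description of $\ker P$, and the Banach lattice structure of $\Ima P$ as the range of a positive projection) are standard.
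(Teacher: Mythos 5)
Your argument is correct and is essentially the same route the paper takes: the paper simply outsources the statement to the proof of \cite[Theorem~2.1]{AroraGlueck2021a} (cf.\ \cite[Theorem~6.3.2]{Glueck2016}) and to \cite[Theorems~V.2.8 and~V.2.14]{EngelNagel2000}, and those references establish it by exactly the Jacobs--de~Leeuw--Glicksberg construction you describe, with positivity of $P$ and of the group obtained from their realisation as weak-operator limits of $e^{t_\alpha A}$ with $t_\alpha\to\infty$ together with individual eventual positivity and weak closedness of $E_+$. So your proposal is a correct, self-contained reconstruction of the cited argument rather than a genuinely different proof.
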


\begin{proof}
    The existence of a positive projection $P$ satisfying~(a) and~(b) is implicitly shown in the proof of \cite[Theorem~2.1]{AroraGlueck2021a}. For assertions~(c), we refer to \cite[Theorems~V.2.8 and~V.2.14]{EngelNagel2000}.
\end{proof}

For the proof of Proposition~\ref{prop:strong-convergence-sufficient}, we additionally need the following characterization of when a positive operator maps quasi-interior points to quasi-interior points. 
The lemma is taken from \cite[Proposition~2.21]{GlueckWeber2020} where it is stated for the more general case of ordered Banach spaces (the notion \emph{almost interior point} from this reference is, on Banach lattices, equivalent to quasi-interior point).
For the convenience of the reader, we include the simple proof.

\begin{lemma}
    \label{lem:qi-to-qi}
    Let $T: E \to F$ be a positive operator between Banach lattices and assume that $E$ contains a quasi-interior point of $E_+$. 
    The following are equivalent.
    \begin{enumerate}[\upshape (i)]
        \item
        There exists $x \in E_+$ such that $Tx$ is a quasi-interior point of $F_+$. 

        \item
        The adjoint operator $T': F' \to E'$ is strictly positive, meaning that $T'\varphi \gneq 0$ for every $0 \lneq \varphi \in F'$. 

        \item
        For every quasi-interior point $u$ of $E_+$, the image $Tu$ is a quasi-interior point of $F_+$.
    \end{enumerate}
\end{lemma}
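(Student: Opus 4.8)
The plan is to prove the cycle of implications \ref{item:qi-iii} $\Rightarrow$ \ref{item:qi-i} $\Rightarrow$ \ref{item:qi-ii} $\Rightarrow$ \ref{item:qi-iii} — or, since the enumerate labels are anonymous, simply (iii) $\Rightarrow$ (i) $\Rightarrow$ (ii) $\Rightarrow$ (iii) — using throughout the functional characterisation of quasi-interior points recalled in the preliminaries: an element $v$ of the positive cone of a Banach lattice is a quasi-interior point if and only if $\duality{\psi}{v}>0$ for every $0\lneq \psi$ in the dual. The implication (iii) $\Rightarrow$ (i) is then immediate: by hypothesis $E$ contains a quasi-interior point $u$ of $E_+$, and applying (iii) to this particular $u$ yields that $Tu$ is a quasi-interior point of $F_+$, so (i) holds with $x=u$. (This is the one place where the standing assumption that $E$ has a quasi-interior point is used, and it is exactly what prevents (i) from being vacuously weaker than the rest.)

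For (i) $\Rightarrow$ (ii), I would fix $x\in E_+$ with $Tx$ a quasi-interior point of $F_+$ and take an arbitrary $0\lneq \varphi\in F'$. Since $T$ is positive, its adjoint $T'$ is positive, so $T'\varphi\ge 0$ automatically, and it only remains to exclude $T'\varphi=0$. Quasi-interiority of $Tx$ gives $\duality{\varphi}{Tx}>0$, and since $\duality{\varphi}{Tx}=\duality{T'\varphi}{x}$, we conclude $T'\varphi\ne 0$, hence $T'\varphi\gneq 0$. As $\varphi$ was arbitrary, $T'$ is strictly positive.

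For (ii) $\Rightarrow$ (iii), let $u$ be an arbitrary quasi-interior point of $E_+$ and let $0\lneq \psi\in F'$. Strict positivity of $T'$ gives $T'\psi\gneq 0$, and quasi-interiority of $u$ then forces $\duality{\psi}{Tu}=\duality{T'\psi}{u}>0$. Since this holds for every positive non-zero functional $\psi$ on $F$, the image $Tu$ is a quasi-interior point of $F_+$, which is (iii).

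Each step is a one-line manipulation combining the adjoint relation $\duality{\psi}{Tu}=\duality{T'\psi}{u}$, positivity of $T'$, and the functional characterisation of quasi-interiority, so I do not anticipate any genuine obstacle; the only subtleties worth stating explicitly are that positivity of $T$ transfers to $T'$ (a standard fact about Banach lattices) and that the hypothesis on $E$ is what makes condition (i) — an assertion about the mere existence of one good point — equivalent to the two a priori stronger conditions.
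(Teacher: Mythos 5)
Your proof is correct and follows essentially the same route as the paper's: the cycle (iii) $\Rightarrow$ (i) $\Rightarrow$ (ii) $\Rightarrow$ (iii), each step being the adjoint identity $\duality{\psi}{Tu}=\duality{T'\psi}{u}$ combined with the functional characterisation of quasi-interior points, with the standing hypothesis on $E$ used exactly where you use it.
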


\begin{proof}
    We recall again that a vector $u \in E$ is quasi-interior point of $E_+$ if and only if $\langle \psi, u \rangle > 0$ for all $0 \lneq \psi \in E'$ 
    \cite[Theorem~II.6.3]{Schaefer1974}. Of course, (iii) implies (i).
    
    ``(i) $\Rightarrow$ (ii)'':
    Let $x \in E_+$ be such that $Tx$ is a quasi-interior point of $F_+$ and let $0 \lneq \varphi \in F'$. 
    Then $\langle T' \varphi, x \rangle = \langle \varphi, Tx \rangle > 0$, 
    so $T' \varphi \not= 0$.
    
    ``(ii) $\Rightarrow$ (iii)'':
    Let $u \in E_+$ be a quasi-interior point 
    and let $0 \lneq \varphi \in F'$. 
    Due to strict positivity of $T'$, one has $T'\varphi \gneq 0$, 
    so $\langle \varphi, Tu \rangle = \langle T'\varphi, u \rangle > 0$. 
    This shows that $Tu$ is a quasi-interior point of $F_+$.
\end{proof}

\begin{proof}[Proof of Proposition~\ref{prop:strong-convergence-sufficient}]
    Let $P$ be the projection as in Lemma~\ref{lem:jdlg}. If $\ker P=E$, then by Lemma~\ref{lem:jdlg}(c), the semigroup $(e^{tA})_{t \ge 0}$ converges strongly to $0$.

    Next, suppose that $\ker P\ne E$. Then $\dim \Ima P\ge 1$ and $\spb(A)=0$ by boundedness of the semigroup and Lemma~\ref{lem:jdlg}(b).
    Since $P$ is a positive projection, $\Ima P$ is a Banach lattice with respect to an equivalent norm \cite[Proposition~III.11.5]{Schaefer1974}.
    We show that $\dim \Ima P=1$. To this end, it suffices to show that every positive non-zero element in $\Ima P$ is a quasi-interior point of $(\Ima P)_+$ (this sufficiency is proved in \cite[Lemma~5.1]{Lotz1968}, for an English translation, see \cite[Remark~5.9]{Glueck2018}).
    So, let $0\lneq f\in \Ima P$. 
    By assumption, there exists $t_f\ge 0$ such that $e^{t_fA}f$ is a quasi-interior point of $E_+$.
    Furthermore, since the semigroup is individually eventually positive, we can choose $t_f$ such that $e^{tA}f\ge 0$ for all $t\ge t_f$.
    Now, as $P$ is a positive projection, it maps quasi-interior points of $E_+$ to quasi-interior points of $(\Ima P)_+$; see, for instance, \cite[Proposition~9.3.1]{Arora2023}. Therefore, $e^{t_fA}f=e^{t_fA}Pf= Pe^{t_fA}f$ is a quasi-interior point of $(\Ima P)_+$. Thus, $e^{-t_f A}$ maps the element $e^{2t_fA}f \in (\Ima P)_+\subseteq E_+$ to the quasi-interior point $e^{t_fA}f$ of $(\Ima P)_+$. 
    Additionally, positivity of the operator $\left(e^{t_f A}\restrict{\Ima P}\right)^{-1}$ is known from Lemma~\ref{lem:jdlg}(a).
    Lemma~\ref{lem:qi-to-qi} thus implies that $e^{-t_f A}$ maps all quasi-interior point of $E_+$ to quasi-interior points of $(\Ima P)_+$. In particular, $f=e^{-t_f A}(e^{t_fA}f)$ is a quasi-interior of $(\Ima P)_+$, as intended.

    As $P$ is a positive projection with $\dim \Ima P=1$, there exists $0\lneq u \in E$ and $0\lneq \varphi \in E'$ such that $P=u\otimes \varphi$. Moreover, assertion~(b) in Lemma~\ref{lem:jdlg} and $\dim \Ima P=1$ even imply that $\pntSpec(A)\cap i\bbR$ is bounded (eigenvectors corresponding to distinct eigenvalues are linearly independent). Since the semigroup is individually eventually positive and $\spb(A)=0$, so $0\in \spec(A)$ by \cite[Theorem~7.6]{DanersGlueckKennedy2016a} and in turn, $\pntSpec(A)\cap i\bbR = \{0\}$ due to \cite[Theorem~2.1]{AroraGlueck2021a}. It follows that $e^{tA}$ acts as the identity operator on $\Ima P$ for each $t \ge 0$. Consequently, $u\in \ker A$, the semigroup $(e^{tA})_{t\ge 0}$ converges strongly to $P$ (cf. \cite[Corollary~2.2]{AroraGlueck2021a}),
    and -- by assumption -- $u=e^{t_uA}u$ is a quasi-interior point of $E_+$. Likewise, each $e^{tA'}$ acts as the identity operator on $\Ima P'$. So, $\varphi \in \ker A'$ and if $0\lneq f \in E$, then 
    \[
        \duality{\varphi}{f}=\duality{\varphi}{e^{t_f A} f} \ne 0;
    \]
    because $e^{t_f A} f$ is a quasi-interior point of $E_+$. In other words, $\varphi$ is strictly positive.
\end{proof}

\subsection*{Convergence of persistently irreducible semigroups}

In this subsection, we impose the additional assumption of persistent irreducibility on our semigroup. At first, we give a direct application of Proposition~\ref{prop:strong-convergence-sufficient} to obtain convergence in the strong operator topology. Thereafter, we shift our attention to operator norm convergence which has been characterized for uniformly eventually positive semigroups in \cite[Section~5]{AroraGlueck2021a}.

\begin{theorem}
    \label{thm:strong-convergence-irreducible}
    Let $E$ be a complex Banach lattice and let $(e^{tA})_{t \ge 0}$ be a real, analytic, uniformly eventually positive, and persistently irreducible semigroup on $E$.

    If $(e^{tA})_{t \ge 0}$ has relatively compact orbits, then exactly one of the following holds:
    \begin{enumerate}[\upshape (i)]
        \item The semigroup $(e^{tA})_{t \ge 0}$ converges strongly to $0$.
        \item The semigroup $(e^{tA})_{t \ge 0}$ converges strongly to the rank-one projection $u\otimes \varphi$ for a quasi-interior point $u$ of $E_+$ and a strictly positive functional $\varphi\in E'$ such that $u\in \ker A$ and $\varphi \in \ker A'$.
    \end{enumerate}
\end{theorem}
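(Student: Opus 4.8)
The plan is to deduce Theorem~\ref{thm:strong-convergence-irreducible} from Proposition~\ref{prop:strong-convergence-sufficient}. The semigroup is real and, being uniformly eventually positive, also individually eventually positive, and it has relatively compact orbits; hence the only hypothesis of Proposition~\ref{prop:strong-convergence-sufficient} still to be checked is the individual eventual strong positivity: for every $0\lneq f\in E$ there is $t_f\ge 0$ with $e^{t_fA}f$ a quasi-interior point of $E_+$. Once this is in place, Proposition~\ref{prop:strong-convergence-sufficient} yields precisely the dichotomy~(i)/(ii) and nothing more is needed. So the whole proof reduces to verifying this strong positivity property, and this is the only place where analyticity is used.

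To verify it, I would first note that persistent irreducibility implies irreducibility (a closed ideal invariant under the whole semigroup is in particular eventually invariant), so \cite[Proposition~3.11]{AroraGlueck2024} --- applicable since the semigroup is individually eventually positive and analytic --- gives condition~\eqref{eq:weak-condition-arbitrary-times}. Now fix $0\lneq f$ and choose $t_0\ge 0$ with $e^{tA}\ge 0$ for $t\ge t_0$; for $t\ge t_0$ put $g_t:=e^{tA}f\in E_+$. Each $g_t$ is nonzero: if $e^{tA}f=0$, then the analytic map $s\mapsto e^{sA}f$ vanishes identically on $(0,\infty)$ by the identity theorem, forcing $f=0$. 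Two facts then do the work. \emph{First}, the smallest closed ideal $I$ containing $\{g_t:t\ge t_0\}$ is eventually invariant: for $s\ge t_0$ one has $\modulus{e^{sA}x}\le e^{sA}\modulus{x}$, so $e^{sA}$ maps each principal ideal $E_{g_{t_1}+\dots+g_{t_n}}$ into $E_{g_{t_1+s}+\dots+g_{t_n+s}}\subseteq I$, and this extends to $I$ by continuity; since $I\ne\{0\}$, persistent irreducibility forces $I=E$. \emph{Second}, quasi-interiority propagates forward along the orbit: if $g_{t^\ast}$ is quasi-interior and $s\ge t_0$, then --- because an analytic semigroup has dense range for positive times, so $(e^{sA})'$ is injective --- one gets $\duality{\varphi}{g_{t^\ast+s}}=\duality{(e^{sA})'\varphi}{g_{t^\ast}}>0$ for every $0\lneq\varphi\in E'$, i.e. $g_{t^\ast+s}$ is quasi-interior; hence it suffices to exhibit a \emph{single} time at which $g_t$ is quasi-interior.

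Suppose, for contradiction, that no $g_t$ ($t\ge t_0$) is quasi-interior, and pick functionals $\varphi_t\in E'_+$ with $\duality{\varphi_t}{g_t}=0$. I would bring in the Jacobs--de Leeuw--Glicksberg projection $P$ of Lemma~\ref{lem:jdlg}: if $P=0$ the semigroup converges strongly to $0$ and we are already in case~(i), so assume $P\ne 0$. Then the structural results on the peripheral point spectrum of persistently irreducible eventually positive semigroups (Lemma~\ref{lem:cyclic-peripheral-point}, Theorem~\ref{thm:cyclic-peripheral-point}, together with \cite[Proposition~4.1 and Corollary~4.4]{AroraGlueck2024}) give that $\Ima P$ is one-dimensional, spanned by a quasi-interior fixed vector $u$ of the semigroup, with $P=u\otimes\varphi$ for a strictly positive functional $\varphi$. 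Renormalise so that $\duality{\varphi_t}{u}=1$, and along a common subnet $(t_\alpha)$ tending to infinity extract a weak$^\ast$-cluster point $\varphi_\infty$ of $(\varphi_{t_\alpha})$ and a norm-cluster point $g_\infty=\lim_\alpha e^{t_\alpha A}f$. Since $e^{tA}\restrict{\Ima P}$ is a bounded group and, by strict positivity of $\varphi$, $Pf=\duality{\varphi}{f}u\ne 0$, while $e^{tA}(1-P)f\to 0$, we get $g_\infty\in\Ima P\setminus\{0\}$, hence $g_\infty$ is a positive multiple of the quasi-interior vector $u$; but $\duality{\varphi_\infty}{u}=1$ gives $\varphi_\infty\ne 0$ while $\duality{\varphi_\infty}{g_\infty}=\lim_\alpha\duality{\varphi_{t_\alpha}}{g_{t_\alpha}}=0$, contradicting that $g_\infty$ is quasi-interior. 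Therefore some $g_{t_f}$ is quasi-interior, and Proposition~\ref{prop:strong-convergence-sufficient} completes the proof.

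The hard part is exactly this last step: promoting the merely pointwise-in-time statement ``for every $t$ there is a bad functional $\varphi_t$'' to a contradiction at a single limiting time. Here the analyticity (through the identity theorem, which forbids a scalar orbit function vanishing on an interval), the relative compactness of the orbit (to extract the limiting functional and orbit point), and persistent irreducibility (to pin down $\Ima P$ and the strict positivity of the limiting functional) must all be used together; none of these ingredients can be dropped.
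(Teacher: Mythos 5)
Your overall reduction is the same as the paper's: check that for each $0\lneq f\in E$ there is a time $t_f$ with $e^{t_fA}f$ a quasi-interior point of $E_+$, and then invoke Proposition~\ref{prop:strong-convergence-sufficient}. The paper settles this hypothesis in one line by citing \cite[Proposition~3.12]{AroraGlueck2024}, which says precisely that analytic, uniformly eventually positive, persistently irreducible semigroups are (uniformly) eventually strongly positive. Your attempt to re-derive this property from scratch contains genuine gaps.

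First, the structural claim that $P\ne 0$ forces $\Ima P$ to be one-dimensional with $P=u\otimes\varphi$, $u$ quasi-interior and $\varphi$ strictly positive, is not covered by the results you cite: Theorem~\ref{thm:cyclic-peripheral-point} and \cite[Corollary~4.4]{AroraGlueck2024} require $\spb(A)$ to be a pole of the resolvent (relatively compact orbits do not provide this), Lemma~\ref{lem:cyclic-peripheral-point} requires a strictly positive functional in $\ker A'$, and \cite[Proposition~4.1]{AroraGlueck2024} only applies once a positive eigenvector is already known to exist. In fact this claim is essentially conclusion~(ii) of Proposition~\ref{prop:strong-convergence-sufficient}, whose proof in the paper rests exactly on the strong-positivity hypothesis you are trying to verify, so as cited your argument is circular. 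Second, the limiting step breaks down: normalising $\duality{\varphi_t}{u}=1$ does not make $(\varphi_t)$ norm bounded (the set of positive functionals taking the value $1$ at a quasi-interior point is unbounded in infinite dimensions), so Banach--Alaoglu does not apply, and the interchange $\duality{\varphi_\infty}{g_\infty}=\lim_\alpha\duality{\varphi_{t_\alpha}}{g_{t_\alpha}}$ also needs that bound; normalising $\norm{\varphi_t}=1$ instead allows $\varphi_\infty=0$, and then no contradiction arises. More fundamentally, the mechanism you rely on -- norm convergence of the positive orbit to a nonzero multiple of a quasi-interior point forcing some $g_t$ to be quasi-interior -- is false in general Banach lattices, since quasi-interior points are not norm-open in $E_+$ (in $\ell^2$, setting one coordinate of $(2^{-n})_n$ to zero produces non-quasi-interior vectors arbitrarily close to a quasi-interior one). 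Note also that your ``First'' fact ($I=E$) is never used, a sign that persistent irreducibility does not actually enter your contradiction argument where it must. The correct repair is simply to quote \cite[Proposition~3.12]{AroraGlueck2024}, as the paper does.
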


\begin{proof}
    Due to Proposition~\ref{prop:strong-convergence-sufficient}, we only need to show that the semigroup eventually maps every positive non-zero element to a quasi-interior point of $E_+$.
    But this is indeed the case for 
    analytic, uniformly eventually positive, and persistent irreducible semigroups by \cite[Proposition~3.12]{AroraGlueck2024}. 
\end{proof}

The spectral bound of a semigroup generator being a dominant spectral value is a necessary and one of the sufficient conditions for the convergence of the semigroup in the operator norm topology \cite[Theorem~2.7]{Thieme1998}. For this reason, it is often useful to have sufficient conditions that ensure that the spectral bound is dominant. For uniformly eventually positive semigroups that are norm continuous at infinity, certain conditions are collected in \cite[Theorem~7.3.3]{Arora2023}. In addition, if the semigroup is persistently irreducible, then \cite[Theorem~7.3.3(b)]{Arora2023} reads as follows:

\begin{proposition}
    \label{prop:dominant-spectral-value}
    Let $E$ be a complex Banach lattice and let $(e^{tA})_{t \ge 0}$ be a  real, uniformly eventually positive, and persistently irreducible semigroup on $E$.

    If $(e^{tA})_{t \ge 0}$ is norm continuous at infinity and $\spb(A)>-\infty$ is a pole of the resolvent $\Res(\argument, A)$, then it is a dominant spectral value of $A$.
\end{proposition}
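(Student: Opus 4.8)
The plan is to combine the cyclicity result for the peripheral spectrum (Theorem~\ref{thm:cyclic-peripheral-point}), or rather its refinement available under persistent irreducibility, with the boundedness of the peripheral spectrum that comes from norm continuity at infinity. Concretely, after rescaling we may assume $\spb(A)=0$. Since the semigroup is real, individually eventually positive, persistently irreducible, and $\spb(A)=0$ is a pole of $\Res(\argument,A)$, Corollary~4.4(c) of \cite{AroraGlueck2024} applies: the pole is simple and the corresponding spectral projection is a strictly positive rank-one operator. In particular $0\in\spec(A)$, so $\spec(A)\neq\varnothing$ and Theorem~\ref{thm:cyclic-peripheral-point} is applicable; it yields $\alpha\ge 0$ with $\perpntSpec(A)=i\alpha\bbZ$.

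Next I would invoke norm continuity at infinity to control the size of the peripheral spectrum. By \cite[Theorem~1.9]{MartinezMazon1996} (the same reference used in the proof of Corollary~\ref{cor:dominant-spectral-value}), a semigroup that is norm continuous at infinity has bounded peripheral spectrum: $\perSpec(A)$ is a bounded subset of the imaginary axis. The remaining task is therefore to upgrade the description of $\perpntSpec(A)$ to all of $\perSpec(A)$, i.e.\ to show that every peripheral spectral value is actually an eigenvalue. For this I would argue that under norm continuity at infinity every peripheral spectral value is isolated in the spectrum, hence a pole of the resolvent; indeed boundedness of $\perSpec(A)$ together with the spectral mapping-type properties retained by semigroups norm continuous at infinity (see \cite{MartinezMazon1996} and \cite[Lemma~8.2.4]{Arora2023}) forces the peripheral spectrum to be finite, so each of its points is isolated, and by the characterization in \cite[Theorem~1.9]{MartinezMazon1996} such an isolated peripheral spectral value is a pole. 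Then Proposition~\ref{prop:peripheral-in-point} — whose boundedness hypothesis on the rescaled semigroup is automatic here, since norm continuity at infinity of a semigroup with $\spb(A)=0$ together with finiteness and poleness of the peripheral spectrum gives boundedness of the rescaled semigroup — tells us that every element of $\perSpec(A)$ is an eigenvalue of $A$. Consequently $\perSpec(A)=\perpntSpec(A)=i\alpha\bbZ$, and since this set must be bounded we are forced to have $\alpha=0$, that is $\perSpec(A)=\{0\}=\{\spb(A)\}$, which is exactly the claim that $\spb(A)$ is a dominant spectral value.

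The main obstacle I expect is the passage from ``peripheral point spectrum is $i\alpha\bbZ$'' to ``peripheral spectrum is $i\alpha\bbZ$'': one has to rule out peripheral spectral values that are not eigenvalues. The cleanest route is the one sketched above, deriving finiteness (hence isolatedness) of $\perSpec(A)$ from norm continuity at infinity and then applying Proposition~\ref{prop:peripheral-in-point}; a subtle point to check carefully is that the boundedness hypothesis on the rescaled semigroup required by Proposition~\ref{prop:peripheral-in-point} is genuinely available in this setting, which is why one wants the peripheral spectrum to consist of finitely many poles before invoking that proposition. Alternatively, since this proposition is quoted from \cite[Theorem~7.3.3(b)]{Arora2023}, one may simply cite that reference together with the observation, recorded in the remark following Corollary~\ref{cor:dominant-spectral-value}, that persistent irreducibility enters only through the structure of the peripheral spectrum already established in Theorem~\ref{thm:cyclic-peripheral-point}; the proof then reduces to verifying that the hypotheses of \cite[Theorem~7.3.3(b)]{Arora2023} are met, namely real-ness, uniform eventual positivity, persistent irreducibility, norm continuity at infinity, and $\spb(A)>-\infty$ being a pole — all of which are assumed.
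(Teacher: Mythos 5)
Your primary route has genuine gaps; the fallback you mention at the end is essentially the paper's proof.

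The paper's argument is two lines long: by \cite[Corollary~4.4(c)]{AroraGlueck2024}, persistent irreducibility, individual eventual positivity, and the pole assumption make $\spb(A)$ a \emph{simple pole and a Riesz point} of $A$ (the spectral projection is rank one, hence finite rank); then the external result \cite[Lemma~5.3]{AroraGlueck2021a} (equivalently \cite[Theorem~7.3.3]{Arora2023}) -- uniform eventual positivity plus norm continuity at infinity plus the Riesz point property imply dominance -- finishes the proof. Neither Theorem~\ref{thm:cyclic-peripheral-point} nor Proposition~\ref{prop:peripheral-in-point} is used, and no boundedness of the (rescaled) semigroup is required. Your closing alternative is this argument, so you do have the right proof in hand; the one thing to make explicit is that the relevant hypothesis of the cited result is the Riesz point property, which is exactly what Corollary~4.4(c) supplies.

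The main route, by contrast, does not go through as written. First, a closed bounded subset of $i\bbR$ need not be finite (consider $\{i/n : n\in\bbN\}\cup\{0\}$), so \cite[Theorem~1.9]{MartinezMazon1996} does not ``force the peripheral spectrum to be finite''; to obtain finiteness you would essentially need $\essSpb(A)<\spb(A)$, which is the content of the proposition \emph{following} this one in the paper -- and that proof uses the present proposition, so the argument would be circular. Second, an isolated spectral value of an unbounded operator need not be a pole of the resolvent, and Theorem~1.9 of \cite{MartinezMazon1996} is a boundedness/compactness statement about the peripheral (essential) spectrum, not a pole criterion. Third, Proposition~\ref{prop:peripheral-in-point} requires boundedness of the rescaled semigroup -- a hypothesis the paper explicitly flags as an extra assumption in the remark following that proposition, and one that is \emph{not} assumed here; norm continuity at infinity yields $\gbd(A)=\spb(A)$ but not boundedness of the semigroup, and your sketch for recovering boundedness presupposes exactly the finiteness and pole structure of the peripheral spectrum that has not been established.
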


A spectral value of a closed operator $A$ on a Banach space is called a \emph{Riesz point} of $A$ if it is a pole of the resolvent $\Res(\argument,A)$ and the corresponding spectral projection has finite-dimensional range. 

\begin{proof}[Proof of Proposition~\ref{prop:dominant-spectral-value}]
    Since $(e^{tA})_{t \ge 0}$ is persistently irreducible, individually eventually positive, and $\spb(A)$ is a pole of the resolvent $\Res(\argument,A)$, so by \cite[Corollary~4.4(c)]{AroraGlueck2024}, it is even a simple pole and a Riesz point of $A$. 

    Uniform eventual positivity and norm continuity at infinity now guarantee that $\spb(A)$ is a dominant spectral value of $A$ (see \cite[Lemma~5.3]{AroraGlueck2021a} or \cite[Theorem~7.3.3]{Arora2023}).
\end{proof}

In \cite[Theorem~3.5]{MartinezMazon1996}, it was shown that if $(e^{tA})_{t \ge 0}$ is a positive semigroup that is norm continuous at infinity such that $\spb(A)$ is a Riesz point of $A$, then $(e^{tA})_{t \ge 0}$ is even essentially compact. Using Proposition~\ref{prop:dominant-spectral-value}, we generalise this below.

Recall that for a densely defined closed linear operator $A$, the \emph{(Browder) essential spectrum} can be defined as
\[
    \essSpec(A):= \{ \lambda \in \spec(A): \lambda \text{ is not a Riesz point of }A\};
\]
cf. \cite[Theorem~1]{Lay1968},
and the \emph{essential spectral bound} is then defined as
\[  
    \essSpb(A):= \sup_{\lambda \in \essSpec(A)} \re \lambda.
\]
Of course, $\essSpb(A)\le \spb(A)$.
In addition, if $A$ generates a $C_0$-semigroup, then the semigroup is called \emph{essentially compact} if it is norm continuous at infinity and $\essSpb(A)<\spb(A)$; see \cite[Theorem~1.5]{MartinezMazon1996}.

\begin{proposition}
    Let $E$ be a complex Banach lattice and let $(e^{tA})_{t \ge 0}$ be a  real, uniformly eventually positive, and persistently irreducible semigroup on $E$.

    If $(e^{tA})_{t \ge 0}$ is norm continuous at infinity and $\spb(A)>-\infty$ is a pole of the resolvent $\Res(\argument,A)$, then $(e^{tA})_{t \ge 0}$ is essentially compact.
\end{proposition}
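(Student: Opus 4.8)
The plan is to derive the statement from Proposition~\ref{prop:dominant-spectral-value} together with the spectral properties of semigroups that are norm continuous at infinity established in \cite{MartinezMazon1996}; the argument runs parallel to the proof of \cite[Theorem~3.5]{MartinezMazon1996} for positive semigroups, with Proposition~\ref{prop:dominant-spectral-value} playing the role of the Perron--Frobenius input. Assume without loss of generality that $\spb(A)=0$. First, since $(e^{tA})_{t \ge 0}$ is individually eventually positive and persistently irreducible and $0$ is a pole of the resolvent, \cite[Corollary~4.4(c)]{AroraGlueck2024} -- already invoked in the proof of Proposition~\ref{prop:dominant-spectral-value} -- shows that $0$ is a Riesz point of $A$; in particular $0\notin\essSpec(A)$. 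Moreover, Proposition~\ref{prop:dominant-spectral-value} gives that $0$ is a dominant spectral value of $A$, i.e.\ $\perSpec(A)=\{0\}$, and hence $\spec(A)\cap i\bbR=\{0\}$. Finally, since $\spec(A)\ne\varnothing$ (it contains $0$) and the semigroup is norm continuous at infinity, the weak spectral mapping theorem forces $\gbd(A)=\spb(A)=0$ (combine $r(e^{tA})=e^{t\gbd(A)}$ with $\spec(e^{tA})\setminus\{0\}=\overline{e^{t\spec(A)}}$).

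By the definition of essential compactness it suffices to establish the spectral gap $\essSpb(A)<0$, and since $\essSpec(A)\subseteq\spec(A)\setminus\{0\}$ it is enough to exhibit a $\delta>0$ with $\spec(A)\cap\{\lambda\in\bbC:\re\lambda>-\delta\}=\{0\}$. Here is how I would argue. As $0$ is a pole of $\Res(\argument,A)$, it is an isolated point of $\spec(A)$, so there is $\delta_0>0$ with $\spec(A)\cap\{\modulus{\lambda}<\delta_0\}=\{0\}$. On the other hand, norm continuity at infinity forces the spectrum of $A$ to stay bounded in a strip adjacent to the peripheral line -- this is the decisive spectral consequence of that hypothesis (cf.\ \cite[Theorem~1.9]{MartinezMazon1996} and \cite[Lemma~8.2.4 and Section~8.2]{Arora2023}, recalling $\gbd(A)=0$): there exist $\delta_1>0$ and $N\ge 0$ such that $\spec(A)\cap\{\lambda\in\bbC:\re\lambda\ge-\delta_1\ \text{and}\ \modulus{\im\lambda}\ge N\}=\varnothing$. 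Combining the two facts, $C:=\spec(A)\cap\{\re\lambda\ge-\min(\delta_0,\delta_1)\}$ is contained in $\{0\}\cup\big(\{-\min(\delta_0,\delta_1)\le\re\lambda\le0\}\cap\{\modulus{\im\lambda}\le N\}\big)$, hence is compact; since $0$ is isolated in $C$, the set $C\setminus\{0\}$ is compact and contained in $\{\re\lambda<0\}$, so its real parts attain a maximum $-\delta<0$. This yields $\spec(A)\cap\{\re\lambda>-\delta\}=\{0\}$, as desired, and therefore $\essSpb(A)\le-\delta<0=\spb(A)$.

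Since $(e^{tA})_{t \ge 0}$ is norm continuous at infinity and $\essSpb(A)<\spb(A)$, it is essentially compact by \cite[Theorem~1.5]{MartinezMazon1996}. The heart of the matter -- and the step I expect to need the most care -- is the extraction of the spectral gap in the second paragraph: it is precisely there that norm continuity at infinity (rather than the weaker hypotheses used elsewhere in this section) enters decisively, by preventing the spectrum from escaping to infinity along a strip next to the peripheral line, exactly as in \cite[proof of Theorem~3.5]{MartinezMazon1996}; everything else is bookkeeping built on Proposition~\ref{prop:dominant-spectral-value} and \cite[Corollary~4.4(c)]{AroraGlueck2024}.
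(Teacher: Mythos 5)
Your proposal follows essentially the same route as the paper's proof: both reduce the statement to the spectral gap $\essSpb(A)<\spb(A)$ (after which essential compactness is immediate from the definition, i.e.\ \cite[Theorem~1.5]{MartinezMazon1996}), and both feed in exactly the same ingredients -- \cite[Corollary~4.4(c)]{AroraGlueck2024} to make $\spb(A)$ a Riesz point, Proposition~\ref{prop:dominant-spectral-value} to get $\perSpec(A)=\{\spb(A)\}$, and \cite[Theorem~1.9]{MartinezMazon1996} to prevent the spectrum near the peripheral line from escaping to infinity. The paper argues by contradiction (a sequence in $\essSpec(A)$ with real parts tending to $\spb(A)$ eventually lies in a compact set, hence has a limit in $\essSpec(A)\cap\perSpec(A)$, contradicting the Riesz-point property), while you construct the gap directly; that difference is cosmetic.

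Two wrinkles in your write-up deserve attention. First, the step ``$\gbd(A)=\spb(A)=0$ by the weak spectral mapping theorem'' is not justified: the weak spectral mapping theorem fails for general $C_0$-semigroups, and you give no argument that it holds for semigroups that are merely norm continuous at infinity; asserting $\gbd(A)=\spb(A)$ at this stage comes uncomfortably close to the gap you are trying to establish. Fortunately the step is dispensable, since the relevant consequence of \cite[Theorem~1.9]{MartinezMazon1996} is applied in the paper directly at the level $\re\lambda\ge\spb(A)-\epsilon$, with no reference to $\gbd(A)$. Second, you run the gap argument on the full spectrum, which is why you also need the isolation radius $\delta_0$ coming from the pole and a boundedness statement for all of $\spec(A)$ in the strip; since the quantity to be controlled is $\essSpb(A)$, it suffices (and matches how the paper invokes Theorem~1.9) to work with $\essSpec(A)$ alone: the set $\{\lambda\in\essSpec(A):\re\lambda\ge\spb(A)-\epsilon\}$ is compact, contains no point with $\re\lambda=\spb(A)$ (such a point would lie in $\perSpec(A)=\{\spb(A)\}$, yet $\spb(A)$ is a Riesz point and hence not in $\essSpec(A)$), so the real parts on this compact set attain a maximum strictly below $\spb(A)$, yielding the gap without any isolation argument. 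With these two adjustments your argument coincides with the paper's.
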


\begin{proof}
    Since $(e^{tA})_{t \ge 0}$ is norm continuous at infinity, we only need to show that $\essSpb(A)<\spb(A)$.
    Suppose not, then $\essSpb(A)=\spb(A)$ and so there exists a sequence $(\lambda_n)$ in $\essSpec(A)$ such that $\re \lambda_n \to \spb(A)$. Again employing norm continuity at infinity of the semigroup, we can -- due to \cite[Theorem~1.9]{MartinezMazon1996} -- find $\epsilon>0$ such that
    \[
        \{ \lambda \in \essSpec(A) : \re \lambda \ge \spb(A)-\epsilon\}
    \]
    is compact. As a result, there is a subsequence $(\lambda_{n_k})$ of $(\lambda_n)$ converging to some $\lambda \in \essSpec(A)$. In particular, $\re \lambda =\spb(A)$ and so $\lambda \in \perSpec(A)=\{\spb(A)\}$, where the equality holds due to Proposition~\ref{prop:dominant-spectral-value}. In turn, $\spb(A)=\lambda \in \essSpec(A)$.

    However, as $(e^{tA})_{t \ge 0}$ is persistently irreducible, individually eventually positive, and $\spb(A)$ is a pole of the resolvent $\Res(\argument,A)$, so by \cite[Corollary~4.4(c)]{AroraGlueck2024}, it is even a Riesz point of $A$ contradicting that $\spb(A) \in \essSpec(A)$.
\end{proof}

Sufficient conditions for uniform convergence of semigroups to a finite-rank operator were provided by Webb in \cite[Proposition~2.3]{Webb1987} and Thieme in \cite[Theorem~3.3]{Thieme1998}. The latter for the case of positive and eventually positive semigroups was improved in \cite[Theorem~3.4]{Thieme1998} and \cite[Theorem~5.2]{AroraGlueck2021a} respectively. With the additional assumption of persistent irreducibility, we can further improve these and give the exact description of the limit operator:

\begin{theorem}
    \label{thm:uniform-exponential-balancing}
    On a complex Banach lattice $E$, let $(e^{tA})_{t \ge 0}$ be a  real, uniformly eventually positive, and persistently irreducible semigroup.
    Suppose that $(e^{tA})_{t \ge 0}$ is norm continuous at infinity and $\spb(A)>-\infty$ is a pole of the resolvent $\Res(\argument, A)$.

    Then there exists a quasi-interior point $u$ of $E_+$ and a strictly positive functional $\varphi\in E'$ with $\duality{\varphi}{u}=1$ such that the operators $e^{t(A-\spb(A))}$ converge uniformly to the rank-one operator $u\otimes \varphi$ as $t\to \infty$.
\end{theorem}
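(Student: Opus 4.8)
The plan is to identify the limiting operator as the spectral projection at $\spb(A)$ and then to establish uniform convergence to it, using the dominance supplied by Proposition~\ref{prop:dominant-spectral-value} together with the uniform-convergence machinery for semigroups norm continuous at infinity (in the form already recorded in \cite[Theorem~3.4]{Thieme1998}, \cite[Theorem~5.2]{AroraGlueck2021a}, and \cite[Theorem~7.3.3]{Arora2023}). First I would rescale by $A-\spb(A)$ so that $\spb(A)=0$. Since the semigroup is individually eventually positive and persistently irreducible and $0$ is a pole of $\Res(\argument,A)$, \cite[Corollary~4.4(c)]{AroraGlueck2024} gives that $0$ is a \emph{simple} pole and a Riesz point of $A$ and that the corresponding spectral projection is $P=u\otimes\varphi$ for a quasi-interior point $u$ of $E_+$ and a strictly positive functional $\varphi\in E'$; the relation $P^2=P$ forces $\duality{\varphi}{u}=1$, so $u$ and $\varphi$ already carry the normalisation required in the statement. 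Because the pole is simple, $AP=0$, and hence $e^{tA}$ restricts to the identity on the one-dimensional space $\Ima P$.

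It remains to show $\norm{e^{tA}-P}\to 0$. By Proposition~\ref{prop:dominant-spectral-value}, $0$ is a dominant spectral value, i.e.\ $\spec(A)\cap i\bbR=\{0\}$; combined with the fact that under the present hypotheses the semigroup is essentially compact (the proposition immediately preceding this theorem), so that $\essSpb(A)<0$ and only finitely many spectral values — all of them Riesz points — lie in any strip $\{\re\lambda\ge-\epsilon\}$, one gets a genuine spectral gap: there is $\delta>0$ with $\spec(A)\setminus\{0\}\subseteq\{\re\lambda\le-\delta\}$, equivalently $\spec(A\restrict{\ker P})\subseteq\{\re\lambda\le-\delta\}$. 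Since the restriction of $(e^{tA})_{t\ge 0}$ to the complemented invariant subspace $\ker P$ is again norm continuous at infinity, its growth bound is controlled by its spectral bound (cf.\ \cite{MartinezMazon1996} and \cite[Lemma~8.2.4]{Arora2023}), whence $\gbd(A\restrict{\ker P})<0$ and $\norm{e^{tA}\restrict{\ker P}}\to 0$. As $e^{tA}-P$ vanishes on $\Ima P$ and equals $e^{tA}$ on $\ker P$, this yields $\norm{e^{tA}-P}\to 0$ as $t\to\infty$, which is the assertion with $P=u\otimes\varphi$. Alternatively, and more economically, one quotes directly the uniform-convergence result for uniformly eventually positive semigroups norm continuous at infinity with $\spb(A)$ a Riesz point \cite[Theorem~5.2]{AroraGlueck2021a}: dominance of $\spb(A)$, which we have from Proposition~\ref{prop:dominant-spectral-value}, is exactly the hypothesis that result needs, its conclusion is uniform convergence of $e^{t(A-\spb(A))}$ to the spectral projection, and we have just identified that projection as $u\otimes\varphi$.

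The main obstacle is less any single estimate than the verification of applicability: one must be sure that $\spb(A)$ is a Riesz point — so that the spectral projection is finite-rank and the restricted semigroup on $\ker P$ has a spectral gap — and that the limit operator is precisely the Perron-type projection $u\otimes\varphi$ rather than some unspecified finite-rank operator. Both points are delivered by \cite[Corollary~4.4(c)]{AroraGlueck2024}. With those inputs in place the proof reduces to the bookkeeping above: the simple-pole property pins down the action on $\Ima P$, while dominance together with essential compactness pins down the decay on $\ker P$.
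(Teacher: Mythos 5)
Your proposal is correct and follows essentially the same route as the paper: identify the spectral projection at $\spb(A)$ as $u\otimes\varphi$ via \cite[Corollary~4.4]{AroraGlueck2024}, obtain dominance of $\spb(A)$ from Proposition~\ref{prop:dominant-spectral-value}, and then conclude by the standard uniform-convergence machinery for semigroups norm continuous at infinity. The only cosmetic difference is that the paper delegates the last step to \cite[Proposition~8.2.1]{Arora2023}, whereas you either unfold the spectral-gap argument by hand or quote \cite[Theorem~5.2]{AroraGlueck2021a}.
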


\begin{proof}
    Since $\spb(A)$ is a pole of the resolvent $\Res(\argument,A)$, so due to uniform eventual positivity and persistent irreducibility of $(e^{tA})_{t \ge 0}$, the pole order is one and the corresponding spectral projection $P$ is given by $P=u\otimes \varphi$ for a quasi-interior point $u$ of $E_+$ and a strictly positive functional $\varphi \in E'$ \cite[Corollary~4.4(a)]{AroraGlueck2024}. 
    
    On the other hand, Proposition~\ref{prop:dominant-spectral-value} tells us that $\spb(A)$ is a dominant spectral value of $A$. In particular, all conditions in \cite[Proposition~8.2.1]{Arora2023} that guarantee the uniform convergence of the rescaled semigroup $(e^{t(A-\spb(A))})_{t\ge 0}$ to $P$ are fulfilled.
\end{proof}
    
    Note that if $(e^{tA})_{t \ge 0}$ is an essentially compact semigroup on a Banach space, then $\spb(A)\notin \essSpec(A)$ and so if $\spb(A)\in \spec(A)$, then it must, in particular, be a pole of the resolvent $\Res(\argument,A)$.
    This observation gives us the following  generalisation of \cite[Theorem~14.18]{BatkaiKramarRhandi2017} with the aid of Theorem~\ref{thm:uniform-exponential-balancing}.

\begin{corollary}
    \label{cor:uniform-exponential-balancing}
    Let $E$ be a complex Banach lattice and let $(e^{tA})_{t \ge 0}$ be a  real, uniformly eventually positive, and persistently irreducible semigroup on $E$.

    If $(e^{tA})_{t \ge 0}$ is essentially compact, then there exists a quasi-interior point $u$ of $E_+$ and a strictly positive functional $\varphi\in E'$ with $\duality{\varphi}{u}=1$ such that the operators $e^{t(A-\spb(A))}$ converge uniformly to the rank-one operator $u\otimes \varphi$ as $t\to \infty$.
\end{corollary}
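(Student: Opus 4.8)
The plan is to verify that the hypotheses of Theorem~\ref{thm:uniform-exponential-balancing} are satisfied and then invoke it directly; the only point requiring an argument is that $\spb(A)>-\infty$ is a pole of the resolvent $\Res(\argument,A)$, since norm continuity at infinity is already built into the notion of essential compactness.

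First I would record that essential compactness means precisely that $(e^{tA})_{t\ge 0}$ is norm continuous at infinity and $\essSpb(A)<\spb(A)$. Because $\essSpec(A)\subseteq\spec(A)$ forces $\essSpb(A)\le\spb(A)$, and both quantities equal $-\infty$ as soon as $\spb(A)=-\infty$, this strict inequality is only possible when $\spb(A)>-\infty$; in particular $\spec(A)\ne\varnothing$. Next, since $(e^{tA})_{t\ge 0}$ is uniformly -- hence individually -- eventually positive and $\spb(A)>-\infty$, the spectral bound is itself a spectral value of $A$ by \cite[Theorem~7.6]{DanersGlueckKennedy2016a}. Finally, $\re\spb(A)=\spb(A)>\essSpb(A)$ shows that the spectral value $\spb(A)$ cannot lie in $\essSpec(A)$, so by the definition of the Browder essential spectrum it is a Riesz point of $A$ and, in particular, a pole of $\Res(\argument,A)$; this is exactly the elementary observation recorded just before the statement of the corollary.

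At this stage $(e^{tA})_{t\ge 0}$ is real, uniformly eventually positive, persistently irreducible, norm continuous at infinity, and $\spb(A)>-\infty$ is a pole of $\Res(\argument,A)$, so Theorem~\ref{thm:uniform-exponential-balancing} applies and produces the quasi-interior point $u$ of $E_+$ and the strictly positive functional $\varphi\in E'$ with $\duality{\varphi}{u}=1$ such that $e^{t(A-\spb(A))}\to u\otimes\varphi$ uniformly as $t\to\infty$. The main -- and essentially only -- obstacle is placing the real number $\spb(A)$ in $\spec(A)$: for an arbitrary essentially compact semigroup this can fail (think of one whose peripheral spectrum is a pair of complex conjugate eigenvalues), and it is precisely the eventual-positivity analogue of the Perron--Frobenius property \cite[Theorem~7.6]{DanersGlueckKennedy2016a} that makes the reduction go through.
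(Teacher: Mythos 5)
Your proposal is correct and follows essentially the same route as the paper: essential compactness forces $\spb(A)>-\infty$, eventual positivity places $\spb(A)$ in $\spec(A)$ via \cite[Theorem~7.6]{DanersGlueckKennedy2016a}, the strict inequality $\essSpb(A)<\spb(A)$ makes it a Riesz point and hence a pole, and Theorem~\ref{thm:uniform-exponential-balancing} concludes. Your added justification that $\essSpb(A)<\spb(A)$ already rules out $\spb(A)=-\infty$ is a correct expansion of a step the paper states without detail.
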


\begin{proof}
    Essential compactness of the semigroup guarantees that $\spb(A)>-\infty$. Combined with the eventual positivity of the semigroup, this guarantees that $\spb(A)\in \spec(A)$ by \cite[Theorem~7.6]{DanersGlueckKennedy2016a}. Again using the fact that the semigroup is essentially compact, we obtain from the observation preceding the result that $\spb(A)$ is a pole of the resolvent. The assertion is now a consequence of Theorem~\ref{thm:uniform-exponential-balancing}.
\end{proof}

We close this section with an application of the above result to a coupled system; cf. \cite[Example~5.6]{AroraGlueck2024}.

\begin{example}
    \label{ex:uniform-exponential-balancing}
    On $E_1:= \bbC^3$, consider the semigroup $(e^{tA_1})_{t\ge 0}$ generated by
    \[
        A := \begin{bmatrix}
                7 & -1 & 3\\
                -1 & 7 & 3\\
                3 & 3 & 3
        \end{bmatrix}
    \]
    which is (uniformly) eventually positive according to \cite[Example~5.2]{AroraGlueck2024}. Moreover, since each entry of $A$ is non-zero, the directed graph associated to $A$ is strongly connected. Hence, $(e^{tA_1})_{t\ge 0}$ is even (persistently) irreducible by Proposition~\ref{prop:irreducibility-matrix-semigroup}.

    Next, for $p\in [1,\infty)$, let $A_2$ denote the Dirichlet Laplacian generating an analytic, positive, and (persistently) irreducible semigroup $(e^{tA_2})_{t\ge 0}$ on $E_2:= L^p(0,1)$. In addition, we define positive operators $B_{12}:E_2 \to E_1$ and $B_{21}:E_1\to E_2$ respectively by
    \[
        f\mapsto \int_0^1 f(x)\dx x\ e_3\quad\text{and}\quad z\mapsto z_3\one_{(0,1)};
    \]
    where $e_3 \in \bbC^3$ denotes the third canonical unit vector. Then \cite[Corollary~5.4]{AroraGlueck2024} tells us that the coupled semigroup $(e^{tC})_{t\ge 0}$ on $E_1\times E_2$ generated by
    \[
        C := 
        \begin{bmatrix}
            A_1 &     \\
                & A_2
        \end{bmatrix}
        +
        \begin{bmatrix}
                    & B_{12}    \\
            B_{21}  & 
        \end{bmatrix},
    \]
    is uniformly eventually positive and persistently irreducible. Of course, $(e^{tC})_{t\ge 0}$ is also analytic and $C$ has compact resolvent with non-empty spectrum. Therefore, $(e^{tC})_{t\ge 0}$ is essentially compact due to \cite[Corollary~1.7]{MartinezMazon1996}. Consequently, Corollary~\ref{cor:uniform-exponential-balancing} guarantees that the operators $e^{t(C-\spb(C))}$  converge uniformly to a rank-one operator as $t\to \infty$.
\end{example}

\begin{remark}
    Since boundedness of the semigroup generated by $C-\spb(C)$ is not immediately clear in Example~\ref{ex:uniform-exponential-balancing}, one could not apply the previously known criterion \cite[Theorem~5.1]{AroraGlueck2021a} for uniform convergence of eventually positive semigroups. This roadblock could be avoided due to the presence of persistent irreducibility.
\end{remark}

\section{Non-empty spectrum}
    \label{sec:non-empty-spectrum}

Positive (and in particular, eventually positive) semigroups can have void spectrum \cite[Example~B-III-1.2]{Nagel1986}. In fact, even positive irreducible semigroups with empty spectrum can be constructed \cite[Example~C-III-3.6]{Nagel1986}. Sufficient conditions that guarantee that a positive irreducible semigroup has a non-empty spectrum are given in \cite[Theorem~C-III-3.7]{Nagel1986}. For persistently irreducible eventually positive semigroups, the non-emptiness of the spectrum was investigated in \cite[Corollary~4.6 and Theorem~4.7]{AroraGlueck2024} on the spaces of continuous functions. In this section, we give yet another sufficient condition which seems to be new even for the case of positive irreducible semigroups.

\begin{theorem}
    \label{thm:non-empty-spectrum}
    Let $E$ be a complex Banach lattice and let $(e^{tA})_{t \ge 0}$ be a  real, uniformly eventually positive, and persistently irreducible semigroup on $E$.

    Suppose that that there exists $u\in E_+$ and $t_0\ge 0$ such that $e^{t_0 A} E$ maps into $E_u$ and contains a quasi-interior point of $(E_u)_+$, then $\spec(A)$ is non-empty.
\end{theorem}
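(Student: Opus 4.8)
The plan is to argue by contradiction. Assume $\spec(A)=\varnothing$, so that $\Res(\argument,A)$ is an entire $\calL(E)$-valued function and $A$ is invertible with $A^{-1}\in\calL(E)$ quasinilpotent.

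\smallskip

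\emph{Step 1 (reductions from persistent irreducibility).} Since $e^{sA}E=e^{t_0A}\bigl(e^{(s-t_0)A}E\bigr)\subseteq e^{t_0A}E\subseteq E_u$ for every $s\ge t_0$, the closed ideal $\overline{E_u}$ is eventually $(e^{tA})_{t\ge0}$-invariant, so persistent irreducibility forces $\overline{E_u}=E$; in particular $u$ is a quasi-interior point of $E_+$ and $(E_u,\norm{\argument}_u)$ is an AM-space with order unit $u$, hence lattice isometric to some $C(K)$. As $E_u$ is moreover dense in $E$, the quasi-interior point of $(E_u)_+$ lying in $e^{t_0A}E$ is quasi-interior in $E_+$ as well; replacing $u$ by it (this merely changes $\norm{\argument}_u$ to an equivalent norm) we may assume $u\in e^{t_0A}E$. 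Finally, $(e^{tA})_{t\ge0}$ is not nilpotent: if $e^{sA}=0$ for some $s>0$, then every closed ideal of $E$ would be eventually invariant, forcing $E$ to admit no nontrivial closed ideal, i.e.\ $\dim E\le1$, which then contradicts $e^{sA}=0\ne\one$.

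\smallskip

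\emph{Step 2 (a positive, decreasing, entire operator family).} Fix $\tau\ge t_0$ with $e^{sA}\ge0$ for all $s\ge\tau$. For $0\le x\in E$ and $0\le\psi\in E'$ the scalar function $t\mapsto\duality{\psi}{e^{(t+\tau)A}x}$ is nonnegative and has entire Laplace transform $\duality{\psi}{\Res(\argument,A)e^{\tau A}x}$; Pringsheim's theorem on Laplace transforms of nonnegative functions therefore forces the abscissa of convergence of $\int_0^\infty e^{-\lambda t}\duality{\psi}{e^{(t+\tau)A}x}\dx t$ to be $-\infty$, and by analytic continuation this integral equals $\duality{\psi}{\Res(\lambda,A)e^{\tau A}x}$ — hence is $\ge0$ — for every $\lambda\in\bbR$. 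Thus $\Res(\lambda,A)e^{\tau A}\colon E\to E_u$ is a positive operator for all $\lambda\in\bbR$, and the entire $\calL(E)$-valued function $\Phi(\lambda):=\Res(\lambda,A)e^{2\tau A}=\bigl(\Res(\lambda,A)e^{\tau A}\bigr)e^{\tau A}$ takes values in $\calL(E,E_u)$ (closed graph theorem), is positive and decreasing on $\bbR$ (resolvent identity together with positivity of $\Res(\sigma,A)e^{\tau A}$), satisfies $\modulus{\Phi(\lambda)x}\le\Phi(\re\lambda)x$ for $0\le x\in E$ (Pringsheim again, applied to $e^{(t+2\tau)A}x$), and obeys $\norm{\Phi(\lambda)}_{\calL(E,E_u)}\le C/(\re\lambda-\gbd(A)-1)\to0$ as $\re\lambda\to+\infty$. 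Combining these facts, for every $\sigma\in\bbR$ one has $\norm{\Phi(\lambda)}_{\calL(E)}\le 2\norm{\iota}\,\norm{\Phi(\sigma)}_{\calL(E,E_u)}$ whenever $\re\lambda\ge\sigma$, where $\iota\colon E_u\hookrightarrow E$.

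\smallskip

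\emph{Step 3 (closing the argument, and the main obstacle).} It remains to show that $\sup_{\sigma\le0}\norm{\Phi(\sigma)}_{\calL(E,E_u)}<\infty$. Granting this, $\Phi$ is a bounded entire $\calL(E)$-valued function, hence constant by Liouville's theorem, hence identically $0$ since it tends to $0$ as $\re\lambda\to+\infty$; injectivity of each $\Res(\lambda,A)$ then yields $e^{2\tau A}=0$, contradicting Step~1, and the proof is complete. I expect the uniform bound as $\sigma\to-\infty$ to be the genuinely hard step, and it is precisely here that $u\in e^{t_0A}E$ must be used: passing to the restriction $\Phi(\sigma)\restrict{E_u}\in\calL(C(K))$, whose operator norm equals $\norm{\Phi(\sigma)u}_u$ because $u$ is the order unit of $C(K)$, and writing $u=e^{t_0A}w$ so as to absorb the resolvent into the semigroup, one aims to control $\norm{\Phi(\sigma)u}_u$ through the (weakly) super-exponential decay of the orbits furnished by Step~2; the delicate part is to return from this $\calL(C(K))$-estimate to the $\calL(E,E_u)$-norm, for which one should need that persistent irreducibility forces the operators $e^{sA}\restrict{E_u}$ (for large $s$) to send quasi-interior points of $(E_u)_+$ to quasi-interior points — via Lemma~\ref{lem:qi-to-qi} and an argument in the spirit of the one showing that persistent irreducibility implies condition~\eqref{eq:weak-condition-arbitrary-times} — so that the single vector $u$ dominates the full operator norm.
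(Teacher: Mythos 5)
Your Steps 1 and 2 are essentially sound (the Landau--Pringsheim argument does give positivity of $\Res(\lambda,A)e^{\tau A}$ for all real $\lambda$ once the resolvent is assumed entire, and the reduction to $u\in e^{t_0A}E$ is fine), but the proof is not complete: the decisive claim, $\sup_{\sigma\le 0}\norm{\Phi(\sigma)}_{\calL(E,E_u)}<\infty$, is exactly what you do not prove, and nothing in your sketch makes it plausible. Note that by your own resolvent-identity computation $\Phi$ is \emph{increasing} in the positive operator order as $\sigma\to-\infty$, so no monotonicity helps; and the ``super-exponential decay'' furnished by Step 2 is only the statement that, for each \emph{fixed} real $\sigma$ and each pair $0\le x$, $0\le\psi$, the integral $\int_0^\infty e^{-\sigma t}\duality{\psi}{e^{(t+2\tau)A}x}\dx t$ converges; it carries no uniformity in $\sigma$ whatsoever. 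Bounding $\duality{\psi}{\Phi(\sigma)u}$ uniformly as $\sigma\to-\infty$ would require the orbit $t\mapsto\duality{\psi}{e^{tA}u}$ to decay faster than \emph{every} exponential in an integrated, uniform sense, and neither the smoothing hypothesis, persistent irreducibility, Lemma~\ref{lem:qi-to-qi}, nor the reduction of $\norm{\Phi(\sigma)\restrict{E_u}}$ to $\norm{\Phi(\sigma)u}_u$ on the AM-space $E_u$ gives you that. So the Liouville strategy stalls precisely at its key step, and the argument as written does not establish the theorem.

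For comparison, the paper does not argue by contradiction from an entire resolvent at all. It first observes that the smoothing into $E_u$ makes every positive orbit eventually order bounded, so that $\spb(A)=\gbd(A)$ by Lemma~\ref{lem:lyapunov}; the problem is thereby reduced to showing $\gbd(A)>-\infty$. This is then done directly: with $v:=e^{\tau A}u\ne 0$ (non-nilpotency from persistent irreducibility, as in your Step 1), the quasi-interior point of $(E_u)_+$ in the range yields $e^{\tau A}g\succeq v$, and persistent irreducibility together with a weak${}^*$-compactness argument over the positive norm-one functionals on $E_v$ produces finitely many times $t_1,\dots,t_n$ such that $T:=\sum_k e^{t_kA}$ satisfies $Tv\ge\delta v$ for some $\delta>0$; hence $\spr(T)>0$ and therefore $\spr(e^{t_kA})=e^{t_k\gbd(A)}>0$ for some $k$, i.e.\ $\gbd(A)>-\infty$. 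If you want to salvage your approach, you would in effect have to prove a statement of this strength anyway, since boundedness of $\Phi$ on left half-planes is (under the contradiction hypothesis) tantamount to ruling out $\gbd(A)=-\infty$; the paper's route obtains this directly and avoids the Liouville step altogether.
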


For positive irreducible semigroups on reflexive Banach lattices, the existence of $u\in E_+$ and $t_0\ge 0$ such that $e^{t_0A}\subseteq E_u$ is sufficient for $A$ to have non-void spectrum \cite[Theorem~C-III-3.7(e)]{Nagel1986}. In contrast, we make no reflexivity assumptions in Theorem~\ref{thm:non-empty-spectrum}.  

\begin{remark}
    The condition $e^{t_0A}E\subseteq E_u$ for some $t_0\ge 0$ was known as the \emph{smoothing assumption} in \cite{DanersGlueckKennedy2016b, DanersGlueck2018b} and played a crucial role in characterizing eventual strong positivity of semigroups (see also, \cite{DanersGlueck2017}). Perturbation of this condition and its application to eventual positivity has been recently investigated in \cite{AroraMui2025}.
\end{remark}

If $E$ is a complex Banach lattice in which every non-empty relatively compact set is order bounded, then every individually eventually positive semigroup $(e^{tA})_{t \ge 0}$ on $E$ satisfies $\spb(A)=\gbd(A)$; see \cite[Theorem~6.2.1]{Arora2023} for a proof. Actually, the proof in the aforementioned reference only requires that for each $f\in E_+$, there exists $t_f\ge 0$ and $g\in E$ such that $0 \le e^{tA}f \le g$ on $[t_f,t_f+1]$. We state this observation separately in the below lemma as it plays a crucial role in the proof of Theorem~\ref{thm:non-empty-spectrum}.

\begin{lemma}
    \label{lem:lyapunov}
    Let $(e^{tA})_{t \ge 0}$ be an individually eventually positive semigroup on a Banach lattice $E$ such that for each $f\in E_+$, there exists $t_f\ge 0$ and $g\in E$ with
    \[
        0 \le e^{tA}f \le g \quad \text{for all}\quad t\in [t_f,t_f+1].
    \]
    Then $\spb(A)=\gbd(A)$.
\end{lemma}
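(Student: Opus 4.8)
The inequality $\spb(A)\le\gbd(A)$ always holds, so the task is to prove $\gbd(A)\le\spb(A)$, and the plan is to reduce this — by rescaling and the uniform boundedness principle — to a statement about boundedness of orbits, and then to run the argument of the proof of \cite[Theorem~6.2.1]{Arora2023} using the interval-wise order bound in place of the space-level hypothesis used there. Suppose for contradiction that $\spb(A)<\gbd(A)$ and fix $w$ with $\spb(A)<w<\gbd(A)$. Passing from $A$ to $A-w$ — which preserves individual eventual positivity as well as the hypothesised order bound (with $g$ replaced by $e^{-w t_f}g$) — we may assume $\spb(A)<0<\gbd(A)$. Since the positive cone of a complex Banach lattice is generating, it is enough to show that the orbit $(e^{tA}f)_{t\ge0}$ is bounded for every $0\le f\in E$: the uniform boundedness principle then gives $\sup_{t\ge0}\norm{e^{tA}}<\infty$, hence $\gbd(A)\le0$, contradicting $\gbd(A)>0$.

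So fix $0\le f\in E$ and choose, using the hypothesis together with individual eventual positivity, some $t_f\ge0$ and $0\le g\in E$ with $0\le e^{tA}f\le g$ for all $t\in[t_f,t_f+1]$ and $e^{tA}f\ge0$ for all $t\ge t_f$ (one may take $g$ positive, since $0\le e^{tA}f\le g$ forces $e^{tA}f\le g^+$). As $\spb(A)<0$, the resolvent $\Res(\argument,A)$ is holomorphic on $\{\re\lambda>\spb(A)\}$, a region containing $0$; on the smaller half-plane $\{\re\lambda>\gbd(A)\}$ it is given by the Laplace transform $\lambda\mapsto\int_0^\infty e^{-\lambda t}e^{tA}f\dx t$. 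Because the orbit $t\mapsto e^{tA}f$ is eventually positive, a Pringsheim--Widder-type argument identifies the abscissa of convergence of this Laplace integral with the first real singularity of its holomorphic continuation, which by the previous sentence lies at or below $\spb(A)<0$; in particular $\int_0^\infty e^{tA}f\dx t$ converges as an improper integral, so $\norm{\int_n^{n+1}e^{tA}f\dx t}\to0$ as $n\to\infty$.

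Finally, one upgrades this to a bound on the orbit itself, and it is here that the order structure is indispensable. Writing $t=t_f+n+\theta$ with $n\in\bbN$ and $\theta\in[0,1)$, one has $e^{tA}f=e^{nA}\bigl(e^{(t_f+\theta)A}f\bigr)$, where $e^{(t_f+\theta)A}f$ stays in the order interval $[0,g]$; combining the eventual positivity of the orbit through $f$, the domination by $g$, and the decay of the averages $\int_n^{n+1}e^{tA}f\dx t$, one controls $\norm{e^{tA}f}$ on $[t_f+n,t_f+n+1]$ uniformly in $n$, which yields $\sup_{t\ge0}\norm{e^{tA}f}<\infty$ and closes the contradiction. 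I expect this last step to be the main obstacle: passing from the conditional convergence of the vector-valued Laplace integral at the abscissa to a genuine uniform norm bound on the orbit is precisely the delicate part of the proof of \cite[Theorem~6.2.1]{Arora2023}, and here it must be carried out using only the single interval $[t_f,t_f+1]$ rather than the space-level assumption that relatively compact subsets of $E$ are order bounded; everything preceding it is routine rescaling together with standard facts on (eventually) positive semigroups.
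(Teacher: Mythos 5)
Your reduction (rescale so that $\spb(A)<0$, then use the generating cone and the uniform boundedness principle to reduce to boundedness of positive orbits) and your second step (individual eventual positivity plus a Pringsheim/Landau-type theorem give that the abscissa of improper convergence of the Laplace integral is at most $\spb(A)<0$, hence $\int_0^\infty e^{tA}f\dx t$ converges improperly) are fine, and they are indeed the routine part. The genuine content of the lemma is exactly the step you leave open, and the combination of ingredients you list cannot close it. The identity $e^{tA}f=e^{nA}\bigl(e^{(t_f+\theta)A}f\bigr)$ with $e^{(t_f+\theta)A}f\in[0,g]$ only helps if $e^{nA}$ can be applied monotonically, i.e.\ if $e^{nA}\ge 0$ -- and that is precisely what is not available, since the semigroup is only individually eventually positive. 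Likewise, decay of the averages $\int_n^{n+1}e^{tA}f\dx t$ of the orbit of $f$ alone cannot control the point values $\norm{e^{tA}f}$: for genuinely positive semigroups one always has $\operatorname{abs}(A)=\spb(A)$, yet $\spb(A)<\gbd(A)$ does occur, so the order bound must enter through some monotone propagation mechanism, which your sketch never supplies. A correct completion has to apply individual eventual positivity to the auxiliary vectors $g$ and $g-e^{sA}f$ for $s\in[t_f,t_f+1]$, deal with the fact that the associated positivity times depend on $s$ (for instance by a measurability argument showing the set of admissible $s$ has measure bounded below for large $t$, and then integrating the inequality $0\le e^{tA}f\le e^{(t-s)A}g$ over those $s$), and finally use the decay of the moving averages $\int_{t-t_f-1}^{t-t_f}e^{rA}g\dx r$ of the orbit of $g$ -- obtained from $\operatorname{abs}\le\spb(A)<0$ applied to $g$, not to $f$. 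None of this appears in your proposal; you explicitly defer the ``delicate part'' to the proof of \cite[Theorem~6.2.1]{Arora2023}.

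For comparison, the paper does not reprove the statement either: it observes that the proof of \cite[Theorem~6.2.1]{Arora2023} only ever uses the interval-wise order bound (the space-level assumption there serves solely to produce such a $g$, via compactness of $\{e^{tA}f:\ t\in[t_f,t_f+1]\}$), so the lemma follows verbatim from that proof. Your closing remark that the delicate step ``must be carried out using only the single interval rather than the space-level assumption'' therefore misreads the situation slightly, and since you defer that step to the cited argument, what remains of your proposal is not an independent proof; the one place where you do sketch something of your own is the place where the sketch would fail. (A minor further slip: you cannot in general arrange the order bound and the eventual positivity of the orbit to hold from the same time $t_f$ onwards; keep the two times separate, which costs nothing in the argument.)
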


Note that for elements $x,y$ of a Banach lattice $E$, we use the usual notation $x\succeq y$ to mean that there exists a constant $c>0$ such that $x\ge cy$.

\begin{proof}[Proof of Theorem~\ref{thm:non-empty-spectrum}]
    First of all, note that since $e^{t_0 A}E\subseteq E_u$, so by the semigroup law $e^{tA}E\subseteq E_u$ for all $t\ge t_0$.
    Moreover, due to uniform eventual positivity, we can choose $\tau\ge t_0$ such that $e^{tA}\ge 0$ for all $t\ge \tau$. In particular, for each $f \in E_+$, compactness of $[\tau,\tau+1]$ guarantees the existence of $c_f>0$ such that $0\le e^{tA}f\le c_f u$ for all $t\in [\tau,\tau+1]$. Therefore, the assumptions of Lemma~\ref{lem:lyapunov} are fulfilled and so we only need to show that $\gbd(A)>-\infty$.

    Next, letting $v:= e^{\tau A}u \in E_+$, we see that $e^{tA}E \subseteq E_v$ for all $t\ge t_0+\tau$ and so by the closed graph theorem $e^{tA}\in \calL(E, E_v)$ for all $t\ge t_0+\tau$. Observe that persistent irreducibility of the semigroup guarantees that it is not nilpotent, which is why $v$ cannot be non-zero.
    
    Now, by assumption, there exists $h \in E$ such that $g:=e^{t_0 A}h\in E_u$ is a quasi-interior point of $(E_u)_+$. This means that $g\succeq u$ and in turn, $e^{\tau A}g\succeq  e^{\tau A}u=v$. In particular, $e^{\tau A}g \in E_v$ is a quasi-interior point of $(E_v)_+$ as well. In other words, $\duality{\nu}{e^{\tau A}g} >0$ for every $0\lneq \nu \in (E_v)'$.

    Let $\Delta$ denote the set of all positive linear functionals on $E_v$ with norm equal to $1$. By Banach-Alaoglu theorem, $\Delta$ is a weak${}^*$-compact subset of $(E_v)'$. Fix $\nu \in \Delta$ and let $(e^{(t_0+\tau) A})'$ denote the dual of $e^{(t_0+\tau) A}\in \calL(E, E_v)$. Then
    \[
        \duality{(e^{(t_0+\tau)A})'\nu}{h} = \duality{\nu}{e^{\tau A}g} >0
    \]
    because $0\lneq \nu \in (E_v)'$. As a result, $0\lneq (e^{(t_0+\tau) A})'\nu \in E'$; using the positivity of $e^{(t_0+\tau) A}$. So, by persistent irreducibility of the semigroup, we can find a time $t_{\nu}\ge t_0+\tau$ such that $\duality{\nu}{e^{t_{\nu}A}v} >0$; see \cite[Theorem~3.6]{AroraGlueck2024}. As $\nu \in \Delta$ was arbitrary, weak${}^*$-compactness of $\Delta$ yields $t_1,t_2\ldots,t_n \ge t_0+\tau$ such that
    \[
        \sum_{k=1}^n \duality{\nu}{e^{t_k A} v} > 0 \quad \text{for all}\quad \nu \in \Delta.
    \]
    Setting $T:=\sum_{k=1}^n  e^{t_k A}$, we obtain that $Tv \in E_v$ is a quasi-interior point of $(E_v)_+$, again from the characterization of quasi-interior points in \cite[Theorem~II.6.3]{Schaefer1974}. This means that there exists $\delta>0$ such that $Tv \ge \delta v$ and so positivity due to the positivity of $T$, we even have $T^nv \ge \delta^n v$ for all $n\in \bbN$. As $v\ne 0$, we infer from the Gelfand formula  that the spectral radius of $T$ satisfies $\spr(T)\ge \delta>0$. By the subadditivity of the spectral radius on commuting operators, there exists $k\in \{1,\ldots,n\}$ such that
    $
        e^{t_k \gbd(A) }=\spr(e^{t_k A})>0
    $
    which implies that $\gbd(A)>-\infty$.
\end{proof}

\subsection*{Eventual strong positivity}

A $C_0$-semigroup $(e^{tA})_{t \ge 0}$ on a Banach lattice $E$ is called \emph{uniformly eventually strongly positive} if there exists $t_0\ge 0$ such that $e^{tA}f$ is a quasi-interior point of $E_+$ for all $0\lneq f\in E$ and all $t\ge t_0$. Furthermore, for $u\in E_+$ if there exists $t_0\ge 0$ such that $e^{tA}f\succeq u$ for all $0\lneq f\in E$ and all $t\ge t_0$, then $(e^{tA})_{t \ge 0}$ is called \emph{uniformly eventually strongly positive with respect to $u$}. Sufficient conditions for a semigroup to belong to the latter class are given in \cite{DanersGlueck2018b}. In fact, all available concrete examples of uniformly eventually positive semigroups fall into this category. In particular, they are all persistently irreducible due to \cite[Proposition~3.3]{AroraGlueck2024}. Here, we give a condition that ensures when a  uniformly eventually strongly positive semigroup is even uniformly eventually strongly positive with respect to $u$.

\begin{proposition}
    \label{prop:strong-positivity-with-respect-to-u}
    Let $E$ be a complex Banach lattice and let $(e^{tA})_{t \ge 0}$ be a $C_0$-semigroup on $E$
    such that there exists $u\in E_+$ and  $t_0\ge 0$ such that $e^{t_0 A} E_+$ maps into $(E_u)_+$ and contains a quasi-interior point of $(E_u)_+$.

    If $(e^{tA})_{t \ge 0}$ is uniformly eventually strongly positive, then it is even uniformly eventually strongly positive with respect to $u$.
\end{proposition}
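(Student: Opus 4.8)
The plan is to combine uniform eventual strong positivity of $(e^{tA})_{t\ge 0}$ with the smoothing/positivity hypothesis on $e^{t_0A}$ by means of a duality argument that transports quasi-interior points of $E_+$ to quasi-interior points of $(E_u)_+$, and then to recall that a vector of $E_u$ is a quasi-interior point of $(E_u)_+$ precisely when it is $\succeq u$. Fix a time $t_1\ge 0$ witnessing uniform eventual strong positivity, so that $e^{tA}f$ is a quasi-interior point of $E_+$ for all $0\lneq f\in E$ and all $t\ge t_1$; choose $h\in E_+$ with $g:=e^{t_0A}h$ a quasi-interior point of $(E_u)_+$; and note — by the closed graph theorem, exactly as in the proof of Theorem~\ref{thm:non-empty-spectrum}, using that $E_u$ embeds continuously into $E$ — that $e^{t_0A}$ restricts to a bounded \emph{positive} operator from $E$ into $E_u$ (it maps $E_+$ into $(E_u)_+$), so that its adjoint $(e^{t_0A})'\colon (E_u)'\to E'$ is positive.

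The key step is the following observation. Fix $0\lneq f\in E$ and $t\ge t_1+t_0$, and set $s:=t-t_0\ge t_1$, so that $w:=e^{sA}f$ is a quasi-interior point of $E_+$ while $e^{tA}f=e^{t_0A}w\in(E_u)_+$. Let $0\lneq\psi\in(E_u)'$ be arbitrary. Then $(e^{t_0A})'\psi\ge 0$, and since $g$ is a quasi-interior point of $(E_u)_+$ we get $\duality{(e^{t_0A})'\psi}{h}=\duality{\psi}{g}>0$; hence $(e^{t_0A})'\psi$ is a \emph{non-zero} positive functional on $E$. As $w$ is a quasi-interior point of $E_+$, it follows that $\duality{\psi}{e^{tA}f}=\duality{(e^{t_0A})'\psi}{w}>0$. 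Since $0\lneq\psi\in(E_u)'$ was arbitrary and $e^{tA}f\in(E_u)_+$, the characterisation of quasi-interior points (\cite[Theorem~II.6.3]{Schaefer1974}, applied inside the Banach lattice $E_u$) shows that $e^{tA}f$ is a quasi-interior point of $(E_u)_+$.

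It remains to translate this back. Equipped with the gauge norm $E_u$ is an AM-space with unit $u$, so Kakutani's theorem identifies it with a space $C(K)$ over a compact Hausdorff space $K$, with $u$ corresponding to $\one$; in $C(K)$ a positive function is a quasi-interior point of the positive cone exactly when it is bounded below by a positive constant, i.e.\ exactly when it is an order unit. Back in $E_u$ this says that the quasi-interior points of $(E_u)_+$ are precisely the vectors $v$ with $v\succeq u$. Hence $e^{tA}f\succeq u$ for every $0\lneq f\in E$ and every $t\ge t_1+t_0$, which is exactly the assertion that $(e^{tA})_{t\ge 0}$ is uniformly eventually strongly positive with respect to $u$.

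I expect the only genuinely delicate point to be this last identification of the quasi-interior points of $(E_u)_+$ with the vectors $\succeq u$, where compactness of the Kakutani representation space is essential; everything else is a routine adjoint computation. Note in particular that no growth or boundedness assumption on the semigroup is needed, since the constant implicit in ``$\succeq u$'' is allowed to depend on $f$ and $t$.
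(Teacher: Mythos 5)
Your proof is correct and follows essentially the same route as the paper: compose the strong-positivity time with $t_0$, show that $e^{t_0A}$ carries quasi-interior points of $E_+$ to quasi-interior points of $(E_u)_+$, and identify the latter with the vectors $\succeq u$. The only difference is cosmetic — your adjoint computation is exactly the proof of Lemma~\ref{lem:qi-to-qi} unpacked inline, where the paper simply cites that lemma, and you spell out the Kakutani argument for the final identification that the paper states without proof.
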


\begin{proof}
    Observe that by assumption $e^{t_0A}$ is a positive operator that maps a positive element of $E$ to a quasi-interior point of $(E_u)_+$. In addition, using the fact that the cone of $E$ is generating and the closed graph theorem, we get $e^{t_0 A}\in \calL(E, E_u)$. Therefore, we may infer from Lemma~\ref{lem:qi-to-qi} that $e^{t_0A}$ maps quasi-interior points of $E_+$ to quasi-interior points of $(E_u)_+$. 

    On the other hand, by uniform eventual strong positivity of the semigroup,
    we can choose $\tau\ge 0$ such that $e^{tA}f$ is a quasi-interior point of $E_+$ for every $0\lneq f\in E$ and all $t\ge \tau$. As a consequence, $e^{(t+t_0)A}f$ is a quasi-interior point of $(E_u)_+$ for every $0\lneq f\in E$ and all $t\ge \tau$. Since every quasi-interior point $x$ of $(E_u)_+$ satisfies $x\succeq u$, the assertion follows.
\end{proof}

Since uniformly eventually positive, analytic, and (persistently) irreducible semigroups are even uniformly eventually strongly positive \cite[Proposition~3.12]{AroraGlueck2024}, the following immediately follows from Proposition~\ref{prop:strong-positivity-with-respect-to-u}.

\begin{corollary}
    Let $E$ be a complex Banach lattice and let $(e^{tA})_{t \ge 0}$ be a $C_0$-semigroup on $E$
    such that there exists $u\in E_+$ and $t_0\ge 0$ such that $e^{t_0 A} E_+$ maps into $(E_u)_+$ and contains a quasi-interior point of $(E_u)_+$.

    If $(e^{tA})_{t \ge 0}$ is uniformly eventually positive, analytic, and (persistently) irreducible then it is even uniformly eventually strongly positive with respect to $u$.
\end{corollary}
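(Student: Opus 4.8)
The plan is to deduce this corollary directly from Proposition~\ref{prop:strong-positivity-with-respect-to-u}. Comparing hypotheses, the only assumption of that proposition not already imposed here is that $(e^{tA})_{t\ge0}$ be uniformly eventually strongly positive; the condition on $u$ and $t_0$ (namely that $e^{t_0A}E_+\subseteq (E_u)_+$ and that $e^{t_0A}E_+$ contains a quasi-interior point of $(E_u)_+$) carries over verbatim. Hence the whole task reduces to a single implication.

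That implication --- a uniformly eventually positive, analytic, and (persistently) irreducible $C_0$-semigroup is uniformly eventually strongly positive --- is exactly \cite[Proposition~3.12]{AroraGlueck2024}; it was already used in this form in the proof of Theorem~\ref{thm:strong-convergence-irreducible}. Concretely, it furnishes a \emph{single} time $s_0\ge0$ such that $e^{tA}f$ is a quasi-interior point of $E_+$ for every $0\lneq f\in E$ and all $t\ge s_0$. So first I would invoke this to upgrade the standing hypotheses to uniform eventual strong positivity, and then apply Proposition~\ref{prop:strong-positivity-with-respect-to-u} to conclude that $(e^{tA})_{t\ge0}$ is uniformly eventually strongly positive with respect to $u$.

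There is no substantive obstacle: the analyticity and (persistent) irreducibility assumptions are used only to feed \cite[Proposition~3.12]{AroraGlueck2024}, and everything else is contained in the preceding proposition. The one point that genuinely has to be checked is that \cite[Proposition~3.12]{AroraGlueck2024} delivers \emph{uniform} rather than merely individual eventual strong positivity, so that the hypothesis of Proposition~\ref{prop:strong-positivity-with-respect-to-u} is met on the nose; this is indeed what that reference asserts. Alternatively, one could bypass Proposition~\ref{prop:strong-positivity-with-respect-to-u} and re-run its short argument: the positive operator $e^{t_0A}$ lands in $\calL(E,E_u)$ by the closed graph theorem, it maps quasi-interior points of $E_+$ to quasi-interior points of $(E_u)_+$ by Lemma~\ref{lem:qi-to-qi}, and composing with the uniformly eventually strongly positive part of the semigroup yields the claim --- but routing through the already-proved proposition is cleaner.
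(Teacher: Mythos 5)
Your argument is correct and coincides exactly with the paper's: the corollary is obtained by invoking \cite[Proposition~3.12]{AroraGlueck2024} to upgrade the hypotheses to uniform eventual strong positivity and then applying Proposition~\ref{prop:strong-positivity-with-respect-to-u}. Nothing further is needed.
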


\subsection*{Acknowledgements} 

I am deeply grateful to Jochen Glück for various discussions and for sharing several ideas. His insightful comments and suggestions significantly enhanced this paper, shaping it into its present form. I also extend my thanks to the anonymous referee of \cite{AroraGlueck2024} for suggesting the investigation of Theorem~\ref{thm:cyclicity-peripheral} and to the anonymous referee of this article for their constructive feedback that helped improve some oversights in the earlier version.

The article was initiated while the author was preparing for a plenary talk delivered at the MAT-DYN-NET 24 that took place at the University of Minho, Portugal in February 2024. The author acknowledges COST Action 18232 for the financial support to attend this conference.

\bibliographystyle{plainurl}
\bibliography{literature}

\end{document}